\documentclass[10pt]{article}

\usepackage{amsmath, amsfonts, amssymb, amsthm, amstext}
\usepackage{enumitem}
\usepackage[calcwidth]{titlesec}
\usepackage{graphicx}
\usepackage{subfigure}
\usepackage{appendix}
\usepackage[colorlinks=true,linkcolor=blue,citecolor=blue,pagebackref]{hyperref}
\usepackage{cleveref}
\usepackage{tocbibind}

\theoremstyle{theorem}
\newtheorem{theorem}{Theorem}
\newtheorem{lemma}[theorem]{Lemma}

\newcommand{\Ld}{\mathbf{L^2}}
\newcommand{\Lu}{\mathbf{L^1}}
\newcommand{\om}{{\omega}}
\newcommand{\C}{\mathbb{C}}
\newcommand{\Ehk}{E_{\mathrm{hk}}}

\newcommand{\N}{\mathbb{N}}
\newcommand{\OO}{\mathrm{O}}
\newcommand{\R}{\mathbb{R}}
\newcommand{\SO}{\mathrm{SO}}
\newcommand{\Z}{\mathbb{Z}}
\newcommand{\argmax}{\mathrm{arg}\max}
\newcommand{\argmin}{\mathrm{arg}\min}
\newcommand{\supp}{\mathrm{supp}}
\newcommand{\hrho}{\hat{\rho}}
\newcommand{\nrho}{\rho_{\mathrm{n}}}
\newcommand{\valrho}{\rho_{\mathrm{val}}}
\newcommand{\corrho}{\rho_{\mathrm{cor}}}
\newcommand{\hpsi}{\widehat{\psi}}
\newcommand{\hphi}{\widehat{\phi}}
\newcommand{\lb}{{\langle}}
\newcommand{\rb}{{\rangle}}

\title{Wavelet Scattering Regression of Quantum Chemical Energies%
  \thanks{Submitted to \textit{Multiscale Modeling and Simulation} May 16, 2016. Revised version
    submitted October 24, 2016. Accepted for publication in
    \textit{Multiscale Modeling and Simulation} January 3, 2017. \newline
    \textbf{Funding:} The research and manuscript preparation were
    supported by ERC grant InvariantClass 320959. During the revision of the
    manuscript, M.H. was partially supported by an Alfred P. Sloan
    Research Fellowship, a DARPA Young Faculty Award, NSF grant
    number 1620216, and additionally the Institute for Pure and Applied
    Mathematics (IPAM) during which time he was a resident.}}

\author{Matthew Hirn%
\thanks{Michigan State University, Department of Computational
  Mathematics, Science \& Engineering and Department of Mathematics,
  428 South Shaw Lane, East Lansing, MI, 48824, USA,
  \texttt{mhirn@msu.edu} (corresponding author)}
\and
St\'{e}phane Mallat%
\thanks{\'{E}cole normale sup\'{e}rieure, D\'{e}partement
  d'Informatique, 45 rue d'Ulm, 75005 Paris, France}
\and
Nicolas Poilvert%
\thanks{The Pennsylvania State University, Millennium Science
  Complex, University Park, PA 16801, USA and BayLabs, Inc.,
  \texttt{nicolas@baylabs.io}}}

\usepackage{fancyhdr}
\pagestyle{fancy}
\fancyhf{}
\rhead{Hirn, Mallat, Poilvert}
\lhead{Scattering Regression of Quantum Energies}
\cfoot{\thepage}

\begin{document}

\maketitle

\begin{abstract}
We introduce multiscale invariant dictionaries to estimate
quantum chemical energies of organic molecules, from training databases.
Molecular energies are invariant to isometric atomic displacements,
and are Lipschitz continuous to molecular deformations.
Similarly to density functional theory (DFT), the molecule is represented
by an electronic density function. A multiscale invariant dictionary
is calculated with wavelet scattering invariants. It 
cascades a first wavelet transform which separates scales, with 
a second wavelet transform
which computes interactions across scales. Sparse scattering regressions
give state of the art results
over two databases of organic planar molecules. On these databases,
the regression error is of the order of the error produced by DFT
codes, but at a fraction of the computational cost. \\
\\
\textbf{Key words.} wavelet, scattering, multiscale, nonlinear regression, invariant
dictionary, molecular energy, density functional theory, convolutional
network \\
\\
\textbf{AMS subject classifications.} 42C40, 42C15, 62J02, 62P35,
70F10, 81Q05, 41A05, 41A63
\end{abstract}

\newpage

\tableofcontents

\newpage

\section{Introduction}

Computing the energy of a molecule given the charges and relative
positions of its nuclei is a central topic in computational chemistry.
It has important industrial applications such as molecular structure screening for improved organic photovoltaic materials  \cite{Hachmann:CleanEnergy2011} and predicting the thermodynamics and kinetics of industrially relevant chemical reactions \cite{Delgmann:ChemicalIndustry2014}. Molecular energies are low lying eigenvalues
of the molecular Hamiltonian operator, for the system of all
interacting particles in the molecule. An exact numerical computation
of molecular energies is beyond the capabilities of any computer for
all but a handful of tiny molecules. Density functional theory
\cite{staroverov:dft2013} reduces the computation of ground state molecular energies by mapping the eigenvalue problem for the many-electron wavefunction to a variational problem over the total electronic density \cite{hohenberg:dft1964}. Nevertheless,
the computational complexity remains considerable for large molecules. 
\textcolor{black}{Much faster machine learning algorithms 
have been pioneered by several research
groups, with surprisingly good results. Such algorithms for potential
energy surface fitting of one
molecule or material have been developed for more than two decades,
tracing back to \cite{MATS:MATS040030207} with further
contributions in \cite{doi:10.1021/jp055253z, Behler:NNPotEnergy2007,
  :/content/aip/journal/jcp/134/7/10.1063/1.3553717,
  :/content/aip/journal/jcp/139/18/10.1063/1.4828704,
  bartok:repChemEnviron2013,
  :/content/aip/journal/jcp/144/3/10.1063/1.4940026}. More 
recently machine learning
algorithms have been developed to interpolate molecular energies
across ``chemical compound space,'' fitting the potential energy
surface across a range of molecules with different compositions
\cite{rupp:coulombMatrix2012, vonlilienfeld:fourierQC2015, C6CP00415F}. In both
cases molecular energies are regressed through interpolation over a
database of known molecular energies.} The
precision of such algorithms depends upon the
ability to reduce the dimensionality of the regression problem, by focusing
on relevant variables. Efficient regression procedures take advantage of 
existing invariants \cite{bartok:repChemEnviron2013}.
We introduce a sparse multiscale wavelet scattering regression derived from
known invariant and stability properties of molecular energies. 
State of the art numerical results are shown on databases of
planar molecules. 

Most computational chemistry approaches like density functional theory
make use of the Born-Oppenheimer approximation, 
which models the atomic nuclei as classical particles, whereas electrons are
considered as quantum particles. The state 
$x = \{r_k,z_k \}_{k}$ of a molecule is thus 
defined by the position $r_k \in \R^3$ of each nuclei and
its charge $z_k > 0$. \Cref{DFT} briefly
reviews the principles of density functional theory computations.
A regression algorithm uses 
a training set of $n$ molecular states $\{x_i\}_{i\leq n}$ and their
quantum energies $\{f(x_i) \}_{i \leq n}$ to 
approximate the quantum energy $f(x)$ of any molecule
$x$ within a given set $\Omega$. Such a regression $\tilde f(x)$ 
can be calculated
as a linear expansion over a
dictionary  $\Phi(x) = ( \phi_k (x) )_k $ of functions of
the molecular state $x \in \Omega$,
\begin{equation} \label{eqn: linear regression}
\tilde{f}(x) = \lb w , \Phi(x) \rb = \sum_k w_k \,\phi_k(x).
\end{equation}
The regression vector $w = ( w_k )_k$ is computed by
minimizing the training error
$\sum_{i \leq n} |f(x_i) - \tilde f(x_i) |^2$, with some regularity condition imposed
on its norm. Kernel and sparse regressions 
are reviewed in \cref{Energyregress}.

The main difficulty is to find a dictionary $\Phi(x)$ which produces
a small average error $|f(x) - \tilde f(x)|$ over all $x \in \Omega$. 
Since $w$ is calculated from $n$ training values,
$\tilde f$ belongs to an approximation space, not necessarily linear, 
of dimension smaller than the number $n$ of training samples.
The choice of $\Phi(x) = ( \phi_k (x) )_k $ specifies the regularity
of $\tilde f(x)$, which needs to match the regularity of $f(x)$.

A quantum energy functional $f(x)$ satisfies elementary 
invariance and continuity properties relatively to geometric transformations
of $x$, that we summarize:
\begin{enumerate}[topsep=5pt, itemsep=5pt]

\item\label{item:permutation}
{\bf Permutation invariance}: $f(x)$ is invariant to 
permutations of atom indices $k$ in the state vector 
$x = \{r_k,z_k \}_{k}$. 

\item\label{item:isometry}
{\bf Isometric invariance}: $f(x)$ is invariant to global translations,
rotations, and symmetries of atomic positions $r_k$, and hence to any isometry
\cite{bartok:repChemEnviron2013}.

\item\label{item:deformation}
{\bf Deformation stability}: $f(x)$ is Lipschitz continuous 
to variations of the distances $|r_k - r_l|$ between atoms, and hence
Lipschitz continuous to diffeomorphism actions \cite{bartok:repChemEnviron2013}. 
\end{enumerate}
\textcolor{black}{We remark that the third property, deformation stability, excludes
systems in which the potential energy surface abruptly
changes. This phenomena occurs when electronic states cross, as can
happen when a system goes from the ground state to an excited state
after the absorption of a photon. However,
most molecular systems left in their natural state will exhibit wide
temperature and pressure ranges over which the deformation stability
hypothesis is verified. We also emphasize that deformation stability,
as described, does not include the nuclear charges $\{ z_k
\}_k$. These are considered discrete and we only consider the case of
neutral molecular systems.}

Dictionaries invariant to isometries are usually computed from the 
matrix of Euclidean atomic distances  $\{ |r_k - r_l| \}_{k,l}$. 
State of the art
chemical energy regressions have been obtained with Coulomb kernels
computed from these distances. However, \cref{sec: linear regression
  with coulomb kernels} shows that this representation is not
invariant to permutations of the atom indices $k$, which introduces instabilities.

The goal of this paper is to introduce a dictionary $\Phi(x)$ which
is invariant to permutations and isometries, 
and Lipschitz continuous to diffeomorphism actions. It must also 
generate a space which is sufficiently large to provide
accurate sparse energy regressions.
Similarly to density functional theory,
\cref{sec:density} explains how to represent the molecular
state $x$ by an electronic density function
$\rho[x](u)$ of the spatial variable $u \in \R^3$.
This density is invariant to atom index permutations and covariant
to isometries. Invariance to isometries is then obtained
by applying an invariant operator $\Theta$:
\begin{equation*}
\Phi(x) = \Theta \rho [x].
\end{equation*}
The main difficulty is to define an operator $\Theta$ which is
also Lipschitz continuous to diffeomorphisms, 
and whose range is sufficiently large to regress large training databases.
A tempting solution, used by several
quantum energy regression algorithms
\cite{bartok:gaussAppPot2010,bartok:repChemEnviron2013,bartok:GAP2015,
  vonlilienfeld:fourierQC2015, C6CP00415F}, is to
define $\Theta$ from the autocorrelation or the Fourier 
transform modulus of $\rho$. \Cref{Fourier} explains why
it yields instabilities to deformations, which degrades numerical regressions.

Building invariant dictionaries which are stable to diffeomorphism actions
requires to separate variabilities at different
scales. A large body of work has demonstrated the
efficiency of multiscale modeling in quantum chemical systems 
\cite{Natarajan:2012, Walker:BigDFT2016}. \textcolor{black}{Multiscale
  wavelet transforms are used in some density functional theory softwares,
to reduce computations by
providing sparse representations of atomic wavefunctions
\cite{RevModPhys.71.267, nagy:waveletDensity2005,
  Genovese:Daubechies2008, staroverov:dft2013,
  Pei:Wavelet3dHFBNuclear2014, Yanai:2015gb}}. \Cref{sec: wavelet transform} defines wavelet norms 
which are invariant to isometries.
\Cref{sec:classical physics} proves that
Coulomb potential energies are regressed with much fewer
wavelet invariants than Fourier invariants. Such wavelet expansions
are similar to multipole decompositions
\cite{greengard:multipole1987,greengard:multipoleThesis1988}. Quantum
molecular energies also include exchange-correlation
energy terms, responsible for the existence of chemical bonds, which
are more complex than Coulomb potentials.
The numerical results of \cref{sec: numerical results} show that
wavelet invariants do not generate a sufficiently large 
approximation space to precisely regress these exchange-correlation 
energies.

Wavelet invariants perform a multiscale separation but 
do not take into account interactions across scales. 
Scattering invariants have been 
introduced in \cite{mallat:scattering2012} to characterize these
interactions with multiscale interference terms.
They have found applications for image \cite{mallat:rotoScat2013,
  bruna:invariantScatConvNet2013} and audio
\cite{bruna:audioTextureSynth2013, anden:deepScatSpectrum2014} signal
classifications. \Cref{sec: scattering rep} describes the resulting dictionary
computed by transforming wavelet coefficient amplitudes, with a second
wavelet transform. The resulting wavelet scattering dictionary
remains invariant to isometries and Lipschitz continuous to diffeomorphism
actions.

The accuracy of quantum energy regression with
Coulomb kernels, Fourier, wavelet and scattering dictionaries
is numerically evaluated in \cref{sec: numerical results},
over databases of planar molecules. The planar symmetry 
reduces computations to two dimensions.
It is shown that Coulomb kernel regression is approximately
two times more accurate than Fourier and
wavelet regressions. Second order scattering coefficients
considerably reduce the error of wavelet regressions.
It yields sparse expansions of quantum energy functionals. On these
databases, the error is up to twice smaller than Coulomb kernel
regressions and has an accuracy comparable to DFT numerical codes at a
fraction of the computational cost.

\section{Density Functional Theory}
\label{DFT}

Density functional theory (DFT) provides 
relatively fast numerical schemes to compute
the ground state energy of molecular systems. We summarize its
methodology. Following the Born-Oppenheimer approximation, DFT models
the nucleus of each atom as a classical particle, whereas electrons are
considered as quantum particles. 
The total energy of a molecular state $x = \{r_k , z_k \}_k$ is
decomposed into:
\begin{equation*}
\mathcal{E}(x) = E(x) + \frac{1}{2} \sum_{k \neq l}^K \frac{z_k z_l}{|r_k-r_l|}.
\end{equation*}
The second right hand side term is the Coulomb repulsion energy
of all nucleus-nucleus interactions, considered as classical point-wise particles.
The first term $E(x)$ is the quantum energy resulting from 
electronic interactions. It is computed as an eigenvalue of
the electronic Sch\"{o}dinger equation:
\begin{equation} \label{eqn: schrodinger}
H(x) \Psi = E(x) \Psi,
\end{equation}
where $H(x)$ is the Hamiltonian of the state $x$ and 
$\Psi (\overline{r}_1,...,\overline{r}_N)$ is a wavefunction which
depends upon the positions of the $N$ electrons of the molecule. 
The Hamiltonian is a sum of three terms:
\begin{equation*}
H(x) = -\frac{1}{2} \sum_{j=1}^N \Delta_{\overline{r}_j} -
\sum_{j=1}^N \sum_{k=1}^K \frac{z_k}{|\overline{r}_j-r_k|} +
\frac{1}{2} \sum_{i \neq j}^N\frac{1}{|\overline{r}_i - \overline{r}_j|}.
\end{equation*}
These are respectively the electronic kinetic energy, the
Coulomb electron-nucleus attraction energy, and the Coulomb
electron-electron repulsion energy.

The Schr\"{o}dinger equation \eqref{eqn: schrodinger} has many eigenvalues
corresponding to quantized energies.
The ground state energy corresponds to the lowest lying eigenvalue of a molecular state $x$,
that we designate by $E_0 (x)$:
\begin{equation} \label{groundstaten}
f(x) = E_0(x) + \frac{1}{2} \sum_{k \neq l}^K \frac{z_k z_l}{|r_k-r_l|}.
\end{equation}

Computing the Schr\"{o}dinger eigenvalues requires a considerable amount
of computations because $\Psi (\overline{r}_1,...,\overline{r}_N)$ is 
a function of $3N$ variables. Density functional theory considers instead the
total electronic density as its unknown:
\begin{equation*}
\rho (u) = \int_{\R^{3(N-1)}} | \Psi (u, \overline{r}_2,
\ldots, \overline{r}_N) |^2 \, d\overline{r}_2 \cdots d\overline{r}_N.
\end{equation*}
Since we consider the molecule to be electrically neutral, the sum of electronic charges is equal
to the sum of nuclear charges:
\begin{equation*}
\int_{\R^3} \rho (u)\, du = N = \sum_k z_k.
\end{equation*}

Hohenberg and Kohn \cite{hohenberg:dft1964} proved that there exists
a functional $\Ehk(\rho)$ of the electronic density, whose
minimum is the ground state energy $E_0 (x)$. In the Kohn-Sham approach to density
functional theory \cite{PhysRev.140.A1133}, 
the total energy of the system is expressed as such a functional
of the charge density:
\begin{equation} \label{eqn: exact energy dft}
\begin{aligned}
\Ehk(\rho) = \underbrace{T(\rho)}_{\substack{\text{Kinetic energy}}} -
\underbrace{\sum_{k=1}^K \int_{\R^3} \frac{z_k \rho(u)}{|u-r_k|} \,
  du}_{\substack{\text{External energy}\\\text{(electron-nucleus)}}} +
\underbrace{\frac{1}{2} \int_{\R^3} \int_{\R^3} \frac{\rho(u)
    \rho(v)}{|u-v|} \, du \, dv}_{\substack{\text{Hartree energy}\\\text{(electron-electron)}}} +
\underbrace{E_{\mathrm{xc}}(\rho)}_{\substack{\text{Exchange}\\\text{correlation}\\\text{energy}}}.
\end{aligned}
\end{equation}
\textcolor{black}{The kinetic energy is not explicitly written as a functional of the
density. Rather it is a functional of the Kohn-Sham orbitals, the
square sum of which yields $\rho$.} The exchange correlation energy
regroups all of the quantum
effects that result from collapsing the representation 
of the many body Schr\"{o}dinger equation into a functional of the
electronic density $\rho$. 
The ground state energy $E_0 (x)$ is obtained as the minimum value
of $\Ehk(\rho)$ over all admissible electronic densities $\rho$. Its minimum
value is reached at the ground
state electronic density $\rho_0[x]$ corresponding
to the ground state electronic wavefunction $\Psi_0$ of $E_0 (x)$:
\begin{equation*}
\rho_0 [x] =
\argmin_{{\rm admissible} ~{\rho}} \Ehk({\rho}) \quad \text{and} \quad E_0(x) = \Ehk(\rho_0 [x]). 
\end{equation*}
\Cref{fig: electronic density} shows examples of ground
state electronic densities $\rho_0[x]$ computed for several 
planar organic molecules. 
The observed "smearing" of the electronic density between nuclei is responsible for the presence of chemical bonds within the molecule.

\begin{figure}
\center
\subfigure[$\mathrm{C}_2\mathrm{H}_4$]{
\includegraphics[width=1.4in]{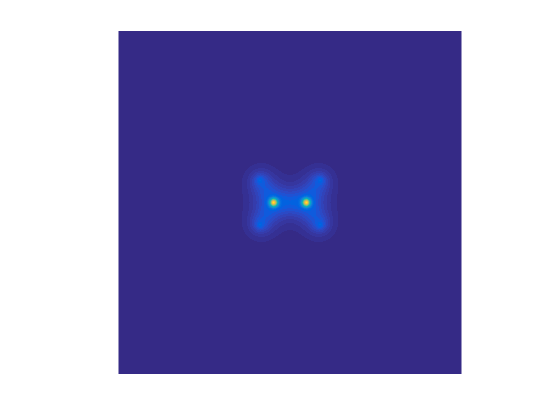}
}
\subfigure[$\mathrm{C}_6\mathrm{H}_4\mathrm{O}\mathrm{S}$]{
\includegraphics[width=1.4in]{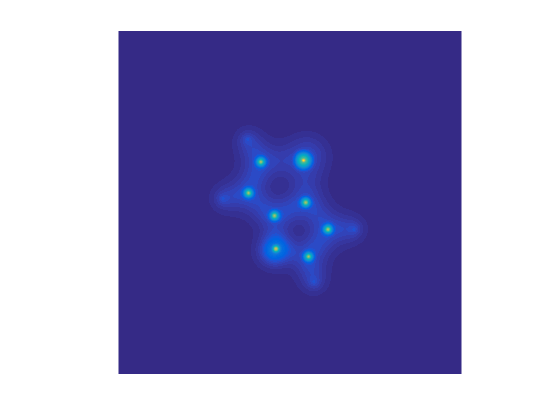}
}
\subfigure[$\mathrm{C}_8\mathrm{H}_9\mathrm{N}$]{
\includegraphics[width=1.4in]{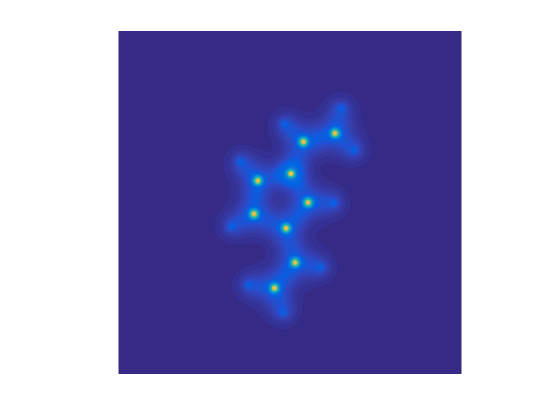}
}
\subfigure[$\mathrm{C}_7\mathrm{H}_9\mathrm{O}\mathrm{N}$]{
\includegraphics[width=1.4in]{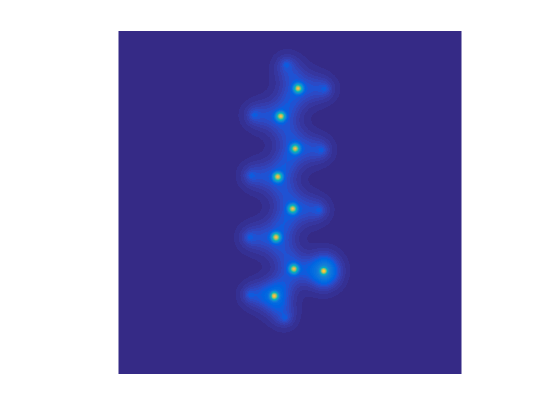}
}
\subfigure[$\mathrm{C}_6\mathrm{H}_3\mathrm{N}\mathrm{O}_2$]{
\includegraphics[width=1.4in]{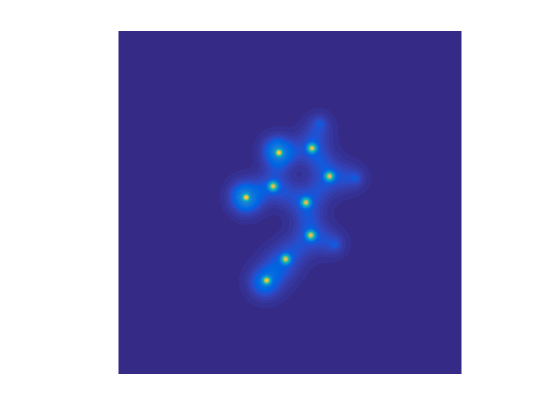}
}
\subfigure[$\mathrm{C}_5\mathrm{H}_6\mathrm{N}_2\mathrm{O}\mathrm{S}$]{
\includegraphics[width=1.4in]{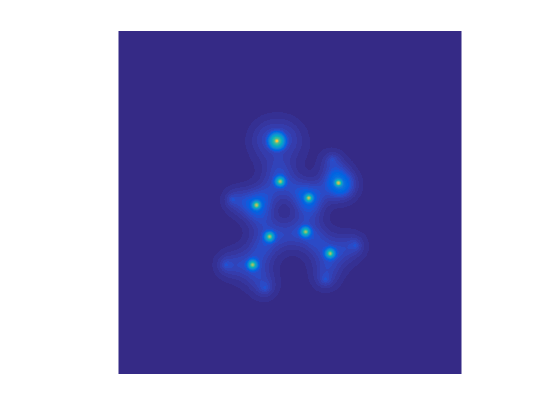}
}
\caption{Ground state electronic density $\rho_0 [x]$ of
planar molecules, in their plane.}
\label{fig: electronic density}
\end{figure}

The Kohn-Sham approach shows that
the high-dimensional Sch\"{o}dinger eigenvalue problem can be replaced
by a variational problem over a three dimensional electronic
density. In practical applications, 
the exchange correlation energy term is approximated, 
which leads to computational errors. However, density functional
theory is most often used to compute molecular energies because it
currently provides the best trade off between accuracy and
computational complexity.

\section{Energy Regression}
\label{Energyregress}

Machine learning regressions are much faster procedures,
which estimate the ground state energy $f(x)$ of a molecular state $x$,
by interpolating a database of ground state energies $\{x_i , f(x_i) \}_i$. 
This approach was pioneered by several groups \cite{Behler:NNPotEnergy2007, rupp:coulombMatrix2012,
  bartok:repChemEnviron2013, vonlilienfeld:fourierQC2015} who obtained
impressive accuracy on different types of molecular databases.
The next section describes Coulomb kernel \textcolor{black}{ridge}
regressions, which give state of the art machine learning results on
organic molecules. In this paper, we shall use sparse regressions,
which are explained in \cref{sec: sparse regression by ols}.

\subsection{Coulomb Kernel Regression}
\label{sec: linear regression with coulomb kernels}

We introduce Coulomb kernel \textcolor{black}{ridge} regressions and
discuss their invariance properties. \textcolor{black}{A kernel ridge regression is a
linear regression over the dictionary $\Phi$,
given by $\tilde f(x)= \lb w, \Phi(x) \rb$, 
where $w = ( w_k )_k$ is computed by minimizing the penalized error:}
\begin{equation}
\label{energy}
\sum_i |\tilde f(x_i) - f(x_i)|^2 + \lambda \, \sum_k |w_k|^2.
\end{equation}
One can verify that the minimum is achieved for a $w$ which can 
be written 
\begin{equation*}
w = \sum_i \alpha_i \, \Phi (x_i),
\end{equation*}
with
\begin{equation*} 
\tilde f(x) = \sum_{i=1}^n \alpha_i\, {K}(x,x_i)~~\mbox{and}~~
{K}(x,x') = \lb \Phi(x),\Phi(x') \rb.
\end{equation*}
The interpolation may thus directly be calculated from the kernel values
${K}(x,x_i)$, by finding the dual variables $\{\alpha_i \}_i$ which minimize
\eqref{energy}. 

For quantum energy regressions, $x = \{r_k,z_k \}_k$ gives the position
and charge of each atom. The kernel must enforce the invariance to isometric
transformations and the Lipschitz continuity to deformations.
A Coulomb kernel 
\cite{rupp:coulombMatrix2012} is calculated from the Coulomb 
energy potentials between all point charges $(r_k,z_k)$
and $(r_l,z_l)$ of $x$:
\begin{equation*}
c_{k,l}(x) = \left\{
\begin{array}{ll}
\frac{1}{2}z_k^{2.4}, & k=l, \\[5pt]
\displaystyle\frac{z_kz_{l}}{|r_k-r_{l}|}, & k \neq l
\end{array}
\right..
\end{equation*}
The resulting Coulomb kernel is
\begin{equation} \label{eqn: coulomb laplace kernel}
{K}(x,x') = \exp \left( - \frac{1}{\sigma} \sum_{k,l}
|c_{k, l}(x) - c_{k,l}(x')| \right).
\end{equation}
If the two molecules have a different number of atoms then the Coulomb
matrix of the smallest molecule is extended with a zero padding. 

Since the Coulomb kernel
depends upon distances $|r_k - r_l|$ it is invariant to isometries applied
to atomic positions. A small deformation modifies these distances by
a multiplicative factor between $1-\epsilon$ and $1 + \epsilon$, where $\epsilon$ measures the deformation size.
Coulomb kernel values are thus modified by multiplicative factors which are
of the same orders, and thus define a representation which is Lipschitz continuous
to deformations.

\textcolor{black}{However, Coulomb kernels are not invariant to permutations on the atom
indices $k$ \cite{PhysRevLett.109.059801}}. This
invariance can be enforced by
ordering the columns and rows of the Coulomb matrix 
in decreasing order of their norm. However, this sorting may not
be uniquely defined for symmetric molecules having several inter atomic
distances of the same value, and it introduces instabilities.
Small perturbations of atomic positions can indeed
modify the ordering, thus breaking the deformation Lipschitz
stability. This instability is reduced by calculating
several Coulomb matrices, by adding a small random noise to the
norms of rows and columns, which modifies their ordering.
The regressed energy of a molecule is the
average of regressed energies calculated over these
different Coulomb matrices.
This numerical technique reduces instabilities but does not eliminate them.
However, these
Coulomb kernel regressions nevertheless
provide good approximations to ground state quantum molecular energies
\cite{rupp:coulombMatrix2012,hansen:quantumChemML2013}.

\subsection{Sparse Regression by Orthogonal Least Squares}
\label{sec: sparse regression by ols}

In this paper, we introduce 
sparse regressions in dictionaries
$\Phi(x) = ( \phi_k (x) )_k$ which are adapted to the properties of 
quantum energy functions $f(x)$. A linear regression is sparse in
$\Phi(x) = ( \phi_k (x) )_k$ 
if there are few, say $M$, non-zero regression
coefficients $w_{k}$. The regression can then be written:
\begin{equation*}
\tilde f(x) = \lb w , \Phi(x) \rb = \sum_{m=1}^M w_{k_m} \, \phi_{k_m} (x).
\end{equation*}
The vector $w$ is optimized in order to minimize the
regression error $\sum_{i \leq n} |f(x_i) - \tilde f(x_i)|^2$, while imposing
that the number of non-zero $w_k$, and hence 
the $\ell^0$ norm of $w$, is smaller than $M$. 
Under appropriate hypotheses on $\Phi(x)$, this $\ell^0$ norm penalization
can be replaced by an $\ell^1$ norm penalization, which is convex. 
However, these hypotheses are violated 
when $\Phi(x) = ( \phi_k (x) )_k$ includes highly correlated functions
\cite{donoho:UPIdealAtomic2001, donoho:sparseGeneralDict2003,
  candes:sparsityIncoherence2007}. This will be the case for quantum 
energy regression over wavelet or Fourier invariants. Sparse regression
may however be computed with non-optimal greedy algorithms such as
greedy orthogonal least square forward selections \cite{chen:OLS1991}.
We describe this algorithm before concentrating 
on the construction of $\Phi(x)$.

A greedy least square algorithm selects the regression vectors
one at a time, and decorrelates the dictionary
relatively to the previously selected vectors. The initial dictionary
$\Phi$ is normalized so that $\sum_i | \phi_k(x_i)|^2 = 1$ for each $k$.
Let us denote by $( \phi^m_k )_k$ the decorrelated dictionary 
at iteration $m$.
We first select $\phi^m_{k_m}$ from among $( \phi^m_k )_k$. We then 
decorrelate the remaining $\phi^m_k$ relatively to $\phi^m_{k_m}$
over the training set $\{x_i \}_i$:
\begin{equation*}
\widetilde{\phi}^{m+1}_{k} = \phi^m_k - \Big( \sum_i \phi^m_{k_m}(x_i)\, \phi^m_k (x_i) \Big)\,
\phi^m_{k_m}.
\end{equation*}
Each decorrelated vector is then normalized to define the updated dictionary:
\begin{equation*}
\phi^{m+1}_k = \widetilde{\phi}^{m+1}_k \, \Big( \sum_i |\widetilde{\phi}_k^{m+1} (x_i)|^2
\Big)^{-1/2}.
\end{equation*}
The linear regression $f_m(x)$ of $f(x)$ is a projection
on the first $m$ selected vectors:
\begin{equation*}
f_m (x) = \sum_{n=1}^m \widetilde{w}_n\, \phi^n_{k_n} (x)~~\mbox{with}~~
\widetilde{w}_n = \sum_i f(x_i)\, \phi^n_{k_n} (x_i).
\end{equation*}
The $m^{th}$ vector $\phi^m_{k_m}$ is selected so that the training 
error $\sum_i |f_m (x_i) - f(x_i)|^2$ is minimized. Since all projections
are orthogonal,
\begin{equation} \label{eqn: ortho error estimate}
\sum_i |f_m (x_i) - f(x_i)|^2 = \sum_i |f(x_i)|^2 - \sum_{n=1}^m |\widetilde{w}_n|^2.
\end{equation}
The error is thus minimized by choosing $\phi^m_{k_m}$ which best
correlates with $f$:
\begin{equation*}
k_m = \argmax_k \left| \sum_i f (x_i) \, \phi^m_{k} (x_i) \right|.
\end{equation*}
The algorithm can be implemented with $QR$ factorization, as described in
\cite{blumensath:OMPvsOLS2007}, or directly as described above. \textcolor{black}{In the
latter case, the cost for each step is $O(nK)$, where $n$ is the
number of training samples $x_i$ and $K$ is the number of dictionary
functions $\phi_k$. Since the algorithm selects $M$ dictionary
functions, the total cost is $O(nKM)$.}

Since $\phi^m_{k_m}$ is a linear combination of the
$\{\phi_{k_n} \}_{n \leq m}$,  
the final $M$-term regression can also be written as a function
of the $\phi_{k_m}$ in the dictionary $\Phi$:
\begin{equation} \label{eqn: final M-term ols regression}
f_M (x) = \sum_{m=1}^M \widetilde{w}_m\, \phi^m_{k_m} (x) = \sum_{m=1}^M w_m\, \phi_{k_m} (x).
\end{equation}
Equation \eqref{eqn: final M-term ols regression} is the orthogonal
least square regression of $f(x)$ over $M$ vectors of  $\Phi(x)$. This
algorithm is used to represent quantum molecular energies over
several dictionaries which are now defined.

\section{Permutation Invariant Electronic Densities}
\label{sec:density}

To compute accurate sparse regressions of $f(x)$ we must introduce a
dictionary $\Phi(x)$ which has the same invariance and regularity
properties as $f(x)$. It must be invariant 
to isometries, and Lipschitz continuous to
deformations, but also invariant 
to permutations of atom indices. 
Similarly to density functional theory, the molecular state $x$ is
represented by an electronic density $\rho[x](u)$ for $u \in \R^3$.
Invariance to isometries is then
obtained by applying an invariant operator $\Theta$:
\begin{equation*}
\Phi (x) = \Theta \rho [x].
\end{equation*}

This section concentrates on the calculation of $\rho[x]$, 
which must be invariant to permutations
of atom indices and covariant to isometries.
We cannot use the ground state molecular density $\rho_0 [x]$,
because its computation is as difficult as
calculating the molecular ground state energy. We replace it
by a much simpler non-interacting density, which adds the electron
densities of isolated atoms. 
It does not include atomic interactions
responsible for molecular bonds.
The effect of these interactions on energy calculations
will be incorporated through regression coefficients on $\Phi(x)$.

Each atom is in a neutral state, with $z_k$ protons and $z_k$ electrons. 
For example, 
Hydrogen and Oxygen atoms correspond respectively to $z_k =1$ and $z_k =8$. 
The electronic density
of such an isolated atom is a rotationally invariant
function $\rho[{z_k}](u)$,
with $\int \rho[{z_k}](u) = z_k$. The resulting non-interacting density of
all atoms located at $\{ r_k\}_k$ is:
\begin{equation} \label{eqn: exact atom rho}
\rho [x] (u) = \sum_{k=1}^K \rho[{z_k}] (u-r_k).
\end{equation}
Clearly $\rho [x]$ is invariant to permutations of the indices $k$,
and covariant to isometries. A deformation
of atomic distances $|r_k - r_l|$ produces a deformation of the atomic
density $\rho$. 

A model for the nuclei is obtained with a point charge 
$\rho[z_k] = z_k \delta(u)$, where each nucleus is treated as a
classical particle. \Cref{sec:classical physics} shows that this model 
captures nuclei Coulomb interactions. As a model for the electrons,
$\rho [z_k] = z_k \delta (u)$ corresponds to aggregating all electrons
at the position of the nucleus, which does not capture their
quantum behavior. Defining $\rho[z_k]$ as the exact electronic
density of an isolated neutral atom with $z_k$ protons is a more
accurate model for the electronic density. It is computed
numerically with a density functional theory software and stored. 

A refinement, which better captures the chemical properties of each
atom is to separate the density of core electrons $\corrho$ from the
density $\valrho$ of valence electrons.
Indeed, chemical bonds are produced by valence electrons, 
whereas core electrons
remain close to the nuclei and do not interact. 
In this case,
$\rho[z_k] (u)$ is considered as a vector with two components
$(\corrho (u) , \valrho(u))$. 
\Cref{sec: numerical results} 
shows that this separation provides a significant
improvement of quantum energy regressions.

\begin{figure}
\center
\subfigure[Ground state electronic density]{
\includegraphics[width=2.25in]{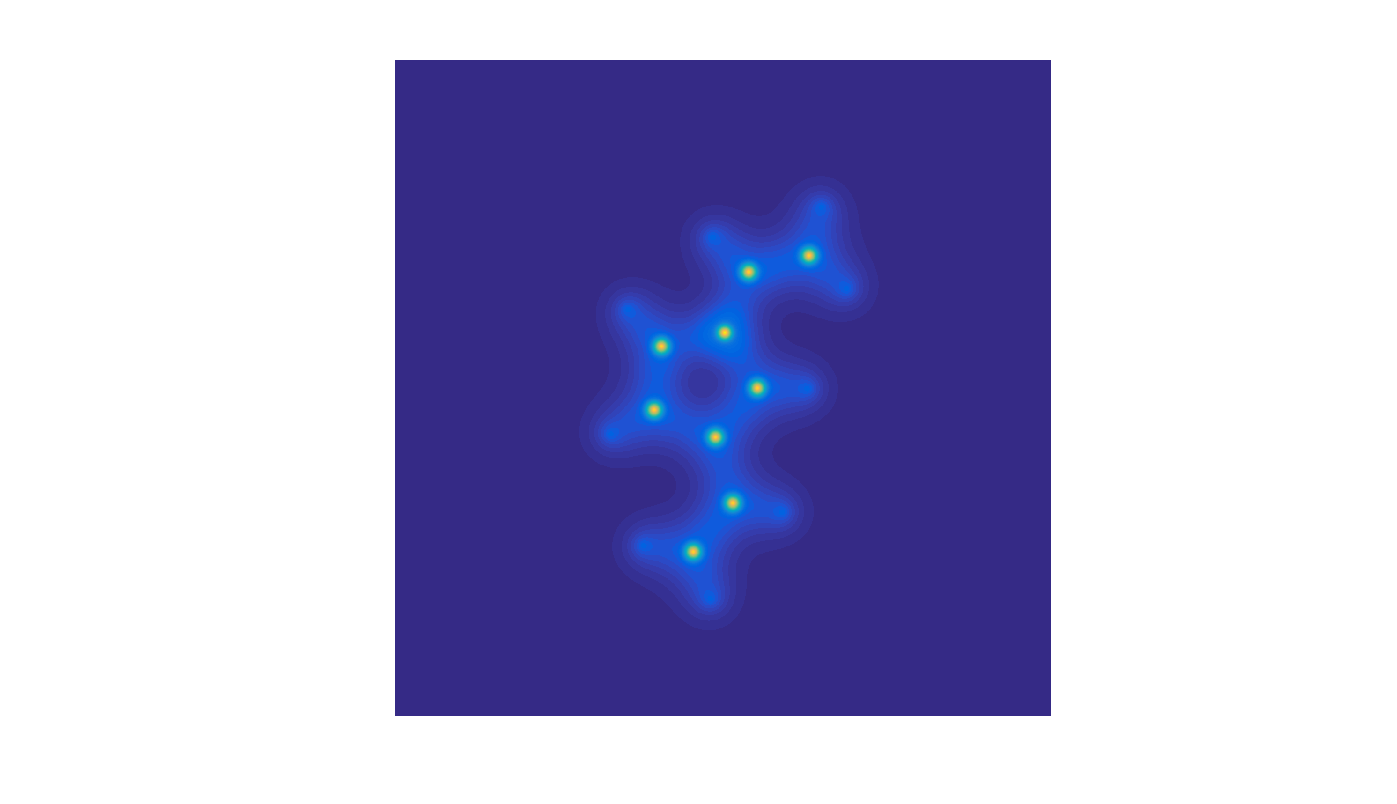}
\label{fig: electronic density model}
}\\
\subfigure[Dirac model]{
\includegraphics[width=2.25in]{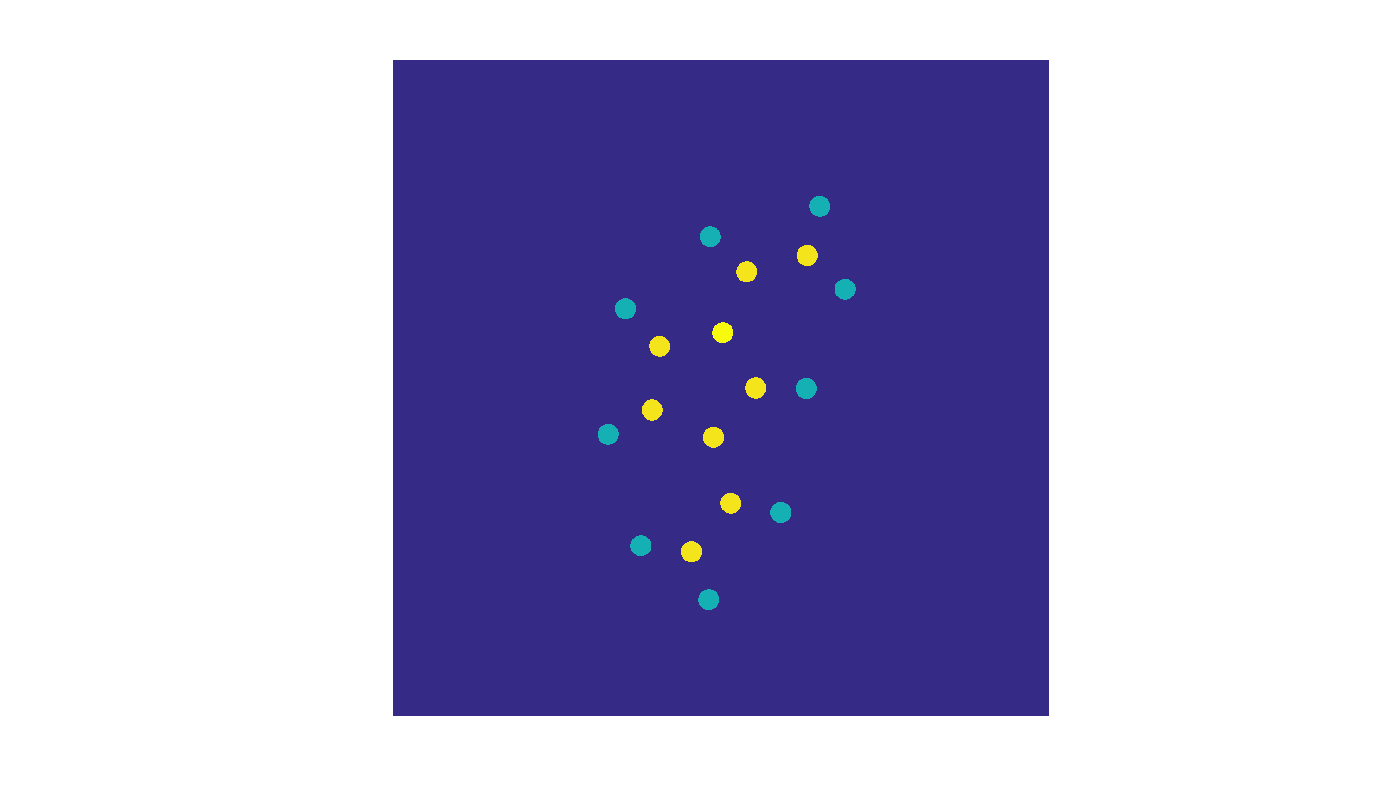}
\label{fig: dirac density model}
}
\subfigure[Atomic density model]{
\includegraphics[width=2.25in]{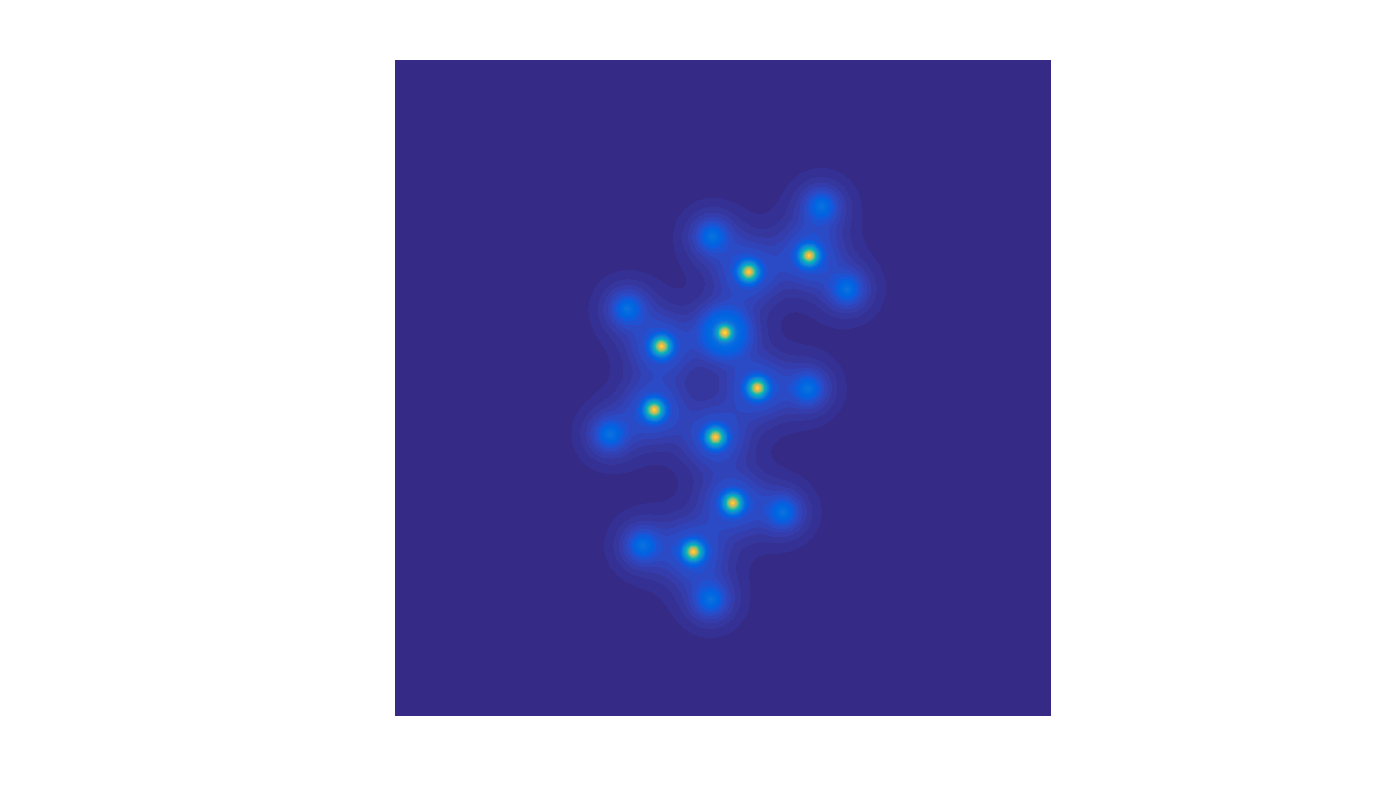}
\label{fig: atomic density model}
}
\subfigure[Core electron density model]{
\includegraphics[width=2.25in]{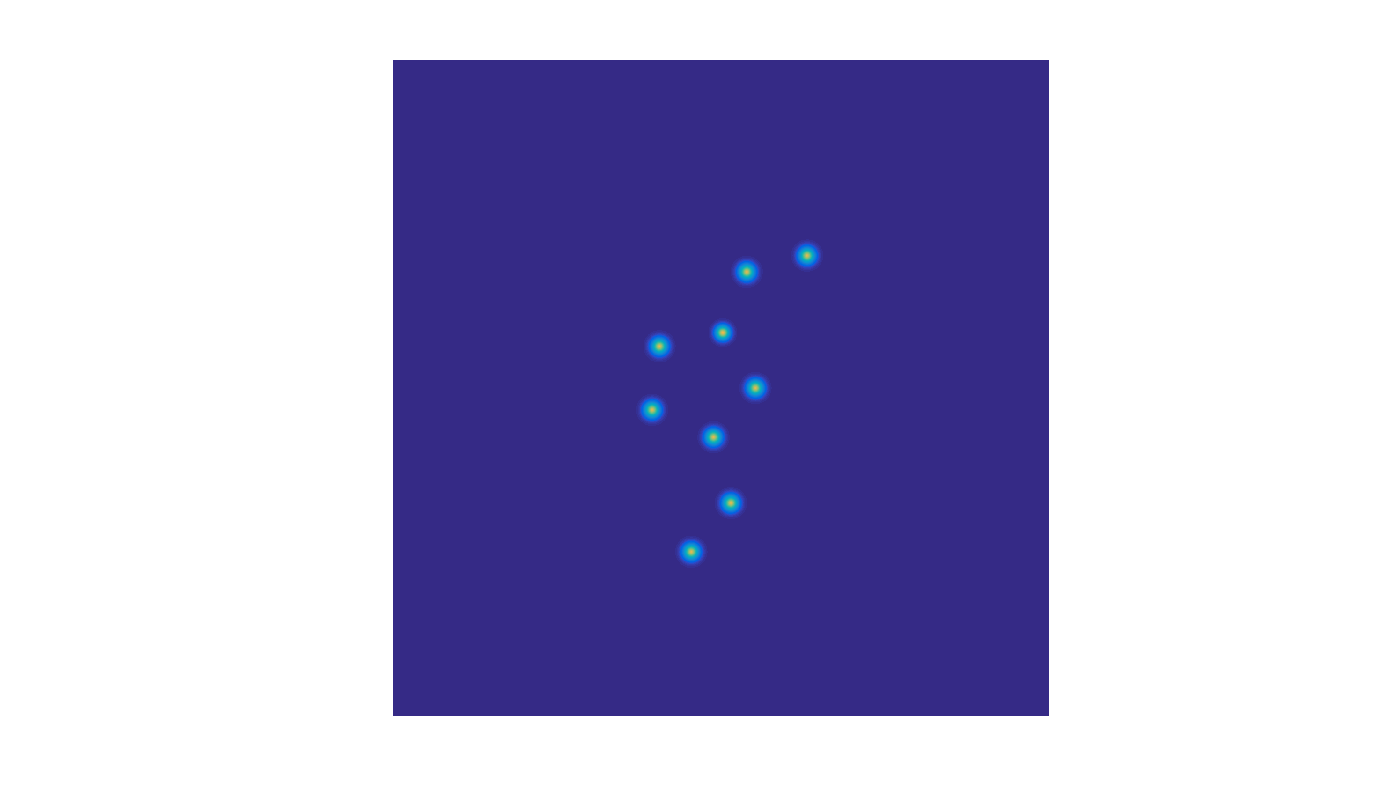}
\label{fig: core density model}
}
\subfigure[Valence electron density model]{
\includegraphics[width=2.25in]{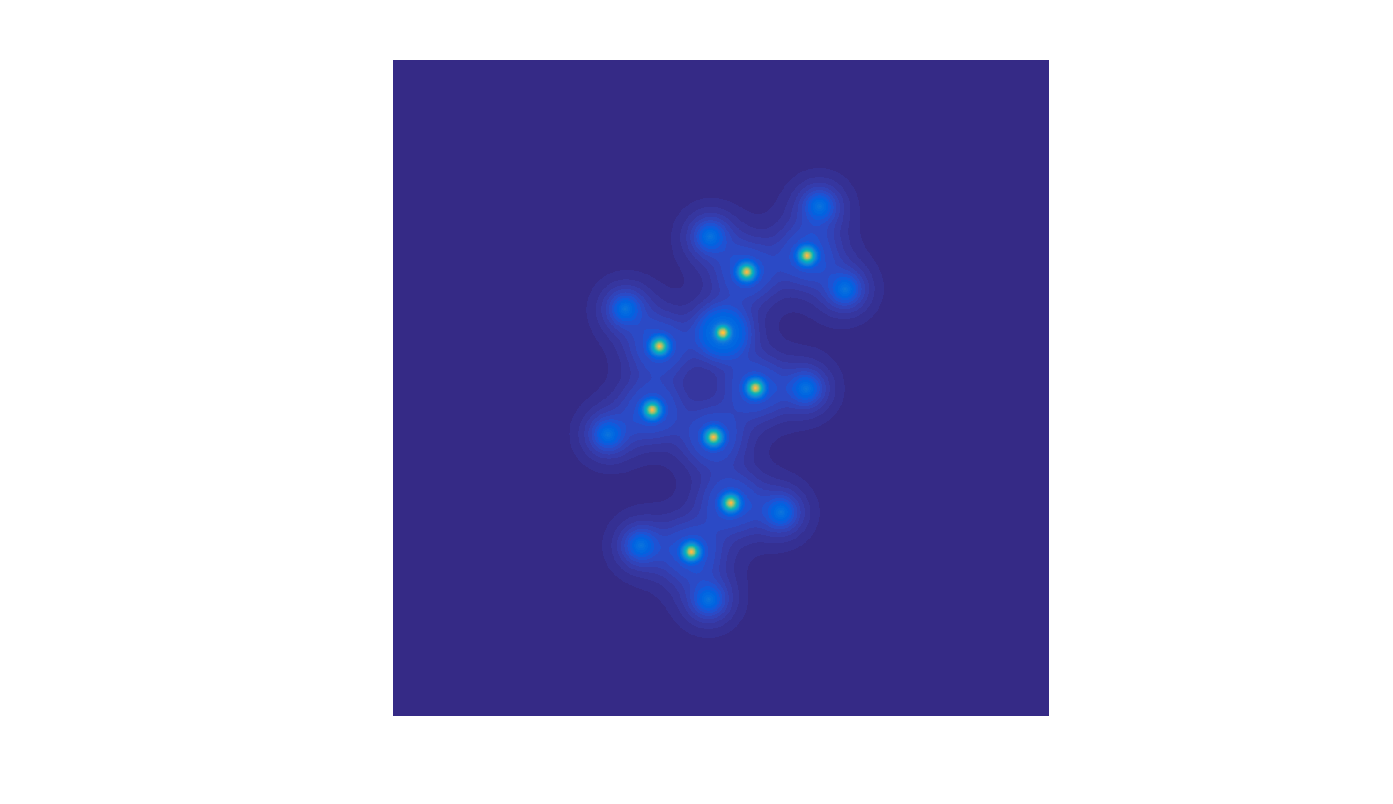}
\label{fig: valence density model}
}
\caption{Electronic density and associated models. \newline (a): Ground state
  electronic density of the planar molecule
  \textcolor{black}{$\text{C}_8\text{H}_9\text{N}$}, restricted to its
  plane, and computed by DFT. \newline (b-e): Non-interacting density models,
  with a Dirac model in (b), an atomic density model computed by DFT
  in (c), which is subdivided into core electrons in (d) and into
  valence electrons in (e).}
\label{fig: exact vs approx density}
\end{figure}

\Cref{fig: electronic density model} shows that the
ground state electronic density $\rho_0 [x]$ is more delocalized
\textcolor{black}{along the bonds}
between atoms than the non-interacting electronic
density in \cref{fig: atomic density model}, computed for individual atoms. 
The core electrons of the density in \cref{fig: atomic density model}
are shown in \cref{fig: core density model}.
They are located close to the nuclei as in
the Dirac model in \cref{fig: dirac density model}. Valence electrons
in \cref{fig: valence density model} are more spread out.

\section{Invariant Fourier Modulus and Autocorrelations}
\label{Fourier}

Invariance to isometries is obtained by applying an operator
$\Theta$ to the electronic density model $\rho$.
A Fourier transform modulus or an autocorrelation
define translation invariant representations.
Integrating over all rotations yields isometric invariant representations,
which are used by several quantum energy regression algorithms
\cite{bartok:gaussAppPot2010,bartok:repChemEnviron2013,bartok:GAP2015,
  vonlilienfeld:fourierQC2015, C6CP00415F}. We review the properties of
these representations. They are invariant to permutations
and isometries, but we show that they are unstable to deformations.
This partly explains the limited
performance of a Fourier modulus representation 
for quantum energy regressions,
as shown by \cref{sec: numerical results}.

The Fourier transform of a density $\rho(u)$ is written for all $\omega \in \R^3$,
\begin{equation*}
\hrho (\omega) = \int_{\R^3} \rho (u) e^{-i u \cdot \omega} \, du.
\end{equation*}
Since the Fourier transform of $\rho(u-\tau)$ is $\hrho (\om)\,e^{-i
  \tau \cdot \om}$,
it results that $|\hrho (\om)|$ is translation invariant. It is also 
symmetry invariant. Indeed, if we define 
$\bar{\rho}(u) = \rho(-u)$ then
$|\hat {\bar{\rho}} (\om)| = |\hat \rho (\om)|$. 

A rotation of $\rho$ yields a rotation of its Fourier transform $\hat \rho$.
A rotation invariant representation is obtained by 
averaging $|\hat \rho (\om)|$
over each rotation orbit, indexed by the sphere $S^2$.
Let us represent $\om \in \R^3$ in spherical coordinates $(\alpha,\eta)$
with $\alpha = |\omega|$ and $\eta \in S^2$. We write $\hat \rho(\om) = \hat \rho_\alpha (\eta)$. 
Rotation invariance is obtained by integrating $|\hat \rho_\alpha (\eta)|^2$ 
over $\eta \in S^2$:
\begin{equation*}
\|\hat \rho_\alpha \|_{2}^2  = \int_{S^2} 
|\hat \rho (\alpha \eta)|^2\, d\eta.
\end{equation*}

For a non-interacting molecular density 
$\rho[x](u) = \sum_k \, \rho[z_k] (u-r_k)$, 
Fourier invariants $\|\hat \rho_\alpha \|_{2}^2$ 
depend upon atomic positions $r_k$
through inter-atomic distances $|r_k - r_l|$. 
Indeed,
\begin{equation*}
|\hat \rho(\om)|^2 = \sum_{k,l} \hat \rho[z_k] (\om)\, \hat \rho[z_{l}] ^* (\om)\,
e^{-i(r_k - r_l) \om}.
\end{equation*}
Since $\rho[z_k] (u)$ is rotationally invariant, 
$\hat \rho[z_k] (\omega)$ is also rotationally invariant. 
Integrating $|\hat \rho (\alpha \eta)|^2$ over $\eta \in S^2$ thus 
only depends on $r_k$ through all $|r_k - r_l|$. 
These Fourier invariants thus define a distance embedding 
which is invariant to index permutations, as opposed to Coulomb
matrices, but they are not stable to deformations.

\Cref{sec:classical physics} proves, essentially, that 
the Coulomb energy of nuclei and electronic densities
can be regressed linearly, with an error $O(\epsilon)$,
over a dictionary of Fourier invariants of size $O(\epsilon^{-2})$.
The radial frequency parameter
$\alpha$ is sampled 
at intervals $\epsilon$ over a frequency range
 $\alpha \in [\epsilon,\epsilon^{-1}]$. It
yields $\epsilon^{-2}$ 
Fourier 
invariants $\{ \|\hat \rho_{k \epsilon} \|_{2}^2 \}_{1 \leq k \leq \epsilon^{-2}}$.
However, quantum energy functionals also include
more complex exchange correlation terms.
The resulting energy stored in so-called chemical bonds grows rather linearly than quadratically
with the number of electrons. To approximate these terms, we include
$\Lu$ norms, which are also invariant to  isometries:
\begin{equation*}
\|\hat \rho_\alpha \|_{1}  = \int_{S^2} 
|\hat \rho (\alpha \eta)|\, d\eta.
\end{equation*}
The resulting Fourier modulus dictionary is defined by:
\begin{equation} \label{FourerRep}
\Theta \rho = \Big( \| \hat{\rho}_{k \epsilon} \|_{1} ~,~ 
\| \hat{\rho}_{k \epsilon} \|_{2}^2 \Big)_{0 \leq k \leq \epsilon^{-2}}~.
\end{equation}
If $\rho$ is defined by core and valence electron densities, then these
norms are computed separately over core and valence densities, which
multiplies by two the number of invariants.

A major drawback of Fourier invariants is their instability to 
deformations. Let $D_\tau \rho (u) = \rho(u-\tau(u))$ be a deformation of
a density $\rho(u)$. 
The amplitude of such a deformation is given by the matrix 
norm of the Jacobian $\nabla \tau$, which is written $\|\nabla \tau \|_\infty$. 
If $\|\nabla \tau \|_\infty < 1$ then $u - \tau(u)$ is a diffeomorphism.  
Lipschitz continuity to deformation means that there exists $C > 0$ such that
for all $\rho \in \Ld(\R^3)$,
\begin{equation}
\label{LIpshc-cont}
\|\Theta \rho - \Theta D_\tau \rho \|_2 \leq C\, \|\nabla \tau \|_\infty\, 
\|\rho\|_2,
\end{equation}
with $\|\rho\|_2^2 = \int |\rho(u)|^2 du$. 
The distance thus decreases to zero when the deformation amplitude
$\|\nabla \tau \|_\infty$ goes to zero. The factor $\|\rho \|_2$ is 
an amplitude  normalization.

It is well known that the Fourier transform modulus 
does not satisfies this Lipschitz continuity property.
Consider a dilation $\tau (u) = \epsilon u$, for which
$\|\nabla u \|_\infty = \epsilon$. In this case 
$\widehat{D_\tau \rho} (\omega) =
(1+\epsilon)^{-1} \hat \rho(\om(1+\epsilon)^{-1})$ so a frequency $\omega_0$ is 
``moved'' by about $\epsilon \, \omega_0$, which is large if $\omega_0 \gg \epsilon^{-1}$. For any $C > 0$, one can then easily construct $\hat \rho$
concentrated around a frequency $\om_0$ so that
$\| |\widehat \rho| - |\widehat {D_\tau \rho}| \|_2$
does not satisfy \eqref{LIpshc-cont}, because the right hand-side 
decreases linearly with $\epsilon$ \cite{mallat:scattering2012}. The rotation invariance
integration does not eliminate this instability so $\Theta$ is not Lipschitz
continuous to deformations.

One may replace the Fourier modulus dictionary by an autocorrelation 
as in SOAP energy regressions
\cite{bartok:gaussAppPot2010,bartok:repChemEnviron2013,bartok:GAP2015,
C6CP00415F}:
\begin{equation*}
A \rho (\tau) = \rho \ast \bar \rho( \tau)~~\mbox{with} ~~\bar \rho(u) = \rho(-u),
\end{equation*}
but it has essentially the same properties.
It is also translation invariant and its Fourier transform
is $|\hat \rho (\om)|^2$.
For an electronic density, we get
\begin{equation*}
A \rho[x](\tau) = \sum_{k,l}  \rho[z_k] \ast \bar \rho[z_l] (\tau - (r_k - r_l)).
\end{equation*}
Rotation invariance is obtained by representing
$\tau$ in spherical coordinates $(\alpha,\eta)$, with $\alpha = |\tau|$,
$\eta \in S^2$, and integrating over $\eta$:
\begin{equation*}
\overline A \rho[x](\alpha) = \int_{S^2} A \rho [x] (\alpha \eta)\, d\eta = \sum_{k,l} 
\int_{S^2} \rho[z_k] \ast \bar \rho[z_l] (\alpha \eta - (r_k - r_l)) \, d\eta.
\end{equation*}
Since each $\rho[z_k]$ is  a rotationally symmetric bump, it
is a sum of bumps centered at $|r_k - r_l|$. It is thus invariant
to isometries acting on $x$. The width of each
bump is below a constant $\sigma$ which measures the maximum spread of
valence electrons of individual atoms. 

Instabilities to deformations 
$D_\tau \rho (u) = \rho (u - \tau (u))$ can be seen with a 
dilation $\tau (u) = (1 + \epsilon) u$. This dilation moves 
each bump located at $|r_k - r_l|$
by $\epsilon |r_k - r_l|$. For long distances
$|r_k - r_l| \gg \sigma / \epsilon$, the bumps produced by the deformed
density have non-overlapping support with the original bumps. It results
that $\|\overline A \rho - \overline A D_\tau \rho \|_2$ does not decreases
proportionally to $\|\nabla u \|_\infty = \epsilon$,
uniformly over all densities $\rho$.
Similarly to the Fourier modulus, autocorrelation invariants
are not Lipschitz continuous to deformations.

Deformation instabilities can be controlled by imposing that the
maximum distance $|r_k - r_l|$ is bounded by a constant which is not
too large. This strategy is adopted by local autocorrelations used by 
Smooth Overlap of Atomic Positions (SOAP) representations
\cite{bartok:repChemEnviron2013}. 
However, this limits the interaction distances
between atoms, which means that one can only regress energies resulting
from short range interactions, while being stable to deformations.

\section{Invariant Wavelet Scattering} 
\label{sec: scattering} 

Wavelet transforms avoid the deformation instabilities of Fourier transforms,
by separating different scales. We first introduce wavelet transform 
invariants while explaining their limitations to regress complex functionals.
Wavelet scattering transforms address these issues
by introducing multiscale interaction terms.

\subsection{Invariant Wavelet Modulus} \label{sec: wavelet transform}

A wavelet transform separates the variations of 
a function along different scales and orientations.
It is computed by
dilating and rotating a mother wavelet $\psi : \R^3 \rightarrow \C$ having a zero average. 
We denote by $r_\theta \in \OO(3)$ an orthogonal operator in $\R^3$,
indexed by a three-dimensional angular parameter $\theta$.
Applying it to $\psi$ at different scales $2^j$ gives:
\begin{equation*}
\psi_{j,\theta}(u) = 2^{-3j}\psi(2^{-j} r_\theta^{-1} u).
\end{equation*}
The wavelet transform of $\rho \in \Ld (\R^3)$ computes 
$\rho \ast \psi_{j,\theta}$.
The Fourier transform of $\psi_{j,\theta}$ is:
\begin{equation*}
\hpsi_{j,\theta} (\om) = \hpsi(2^j r_\theta^{-1} \omega).
\end{equation*}
If $\hpsi(\om)$ has its support centered in the neighborhood of a 
frequency $\omega_0$ then $\hpsi_{j,\theta}$ has a support located in the
neighborhood of $2^{-j} r_\theta \omega_0$. 

Similarly to the Fourier transform, we compute invariants from 
the modulus of wavelet coefficients $|\rho \ast \psi_{j,\theta} (u)|$.
If $\psi^{\ast}(u) = \psi(-u)$, where $\psi^{\ast}(u)$ denotes the
complex conjugate of $\psi (u)$, then $|\rho
\ast \psi_{j,-\theta}| = |\rho \ast \psi_{j,\theta}|$. 
Suppose that $\psi(u)$ is invariant to
two-dimensional rotations around a symmetry axis defined by
$\eta_0 \in S^2$, and 
thus satisfies $\psi(r_\theta u) = \psi(u)$ if $r_\theta \eta_0 = \eta_0$.
All numerical computations are performed with
Morlet wavelets, which are examples of such symmetrical wavelets:
\begin{equation*}
\psi(u) = e^{-|u|^2/2} (e^{i \eta_0 \cdot u} - C),
\end{equation*}
where $C$ is chosen so that $\int \psi(u)\, du = 0$.
The wavelet $\psi_{j,\theta}$ only depends upon the value of
$\eta = r_\theta \eta_0 \in S^2$, modulo a sign. 
All rotations $r_\theta$ which
modify a wavelet can thus be indexed by 
a two-dimensional Euler angular parametrisation $\theta \in 
[0,\pi]^2$ of the half sphere. 

Wavelet coefficients are computed up to a maximum scale $2^J$.
Frequencies below $2^{-J}|\eta_0|$ are captured by a low-pass filter 
$\phi_J(u) = 2^{-3J} \phi(2^{-J} u)$
where $\phi(u) \geq 0$ is a positive
rotationally symmetric function, with $\int \phi(u) du = 1$.
Its Fourier transform
$\hat \phi(\omega)$ is essentially concentrated 
in the ball $|\omega| \leq |\eta_0|$. Often $\phi(u)$ is chosen to be 
a Gaussian. The resulting wavelet transform operator is defined by:
\begin{equation*}
W \rho = \Big\{ \rho \ast \phi_J~,~\rho \ast \psi_{j,\theta} \Big\}_{j
  <  J, \, \theta \in [0,\pi]^2 }.
\end{equation*}

The Fourier transform of $\psi$ is assumed to satisfy the following 
Littlewood-Paley condition for some $0 \leq c < 1$ and
all $\omega \neq 0$:
\begin{equation*}
1-c \leq |\hat \phi(2^J \omega)|^2 + \sum_{j <  J} \int_{[0,\pi]^2}
|\hpsi (2^j r_{\theta}^{-1} \omega)|^2 \, d\theta \leq 1.
\end{equation*}
Applying the Plancherel formula proves that
\begin{equation*}
(1-c)\, \|\rho\|_2^2 \leq \|\rho \ast \phi_J \|_2^2 + 
\sum_{j < J} \int_{[0,\pi]^2} \int_{\R^3}
|\rho \ast \psi_{j,\theta}(u)|^2  \, du \, d\theta \leq  \|\rho \|_2^2.
\end{equation*}
It results that 
$W$ is a stable and invertible operator on $\Ld(\R^3)$.

For an electronic density $\rho[x](u) = \sum_k \rho[z_k](u-r_k)$,
\begin{equation*}
|\rho[x] \ast \psi_{j,\theta}(u) | = \Big|\sum_k \rho[z_k] \ast
\psi_{j,\theta} (u-r_k)\Big|.
\end{equation*}
A wavelet modulus coefficient thus
gives the amplitude at $u$
of interferences produced by wavelets of scale $2^j$ ``emitted'' by
each charge density $\rho[z_k]$ centered at $r_k$; see \cref{fig:
  wavelet modulus first layer visual}.

\begin{figure}
\center
\centerline{\includegraphics[width=8.0in]{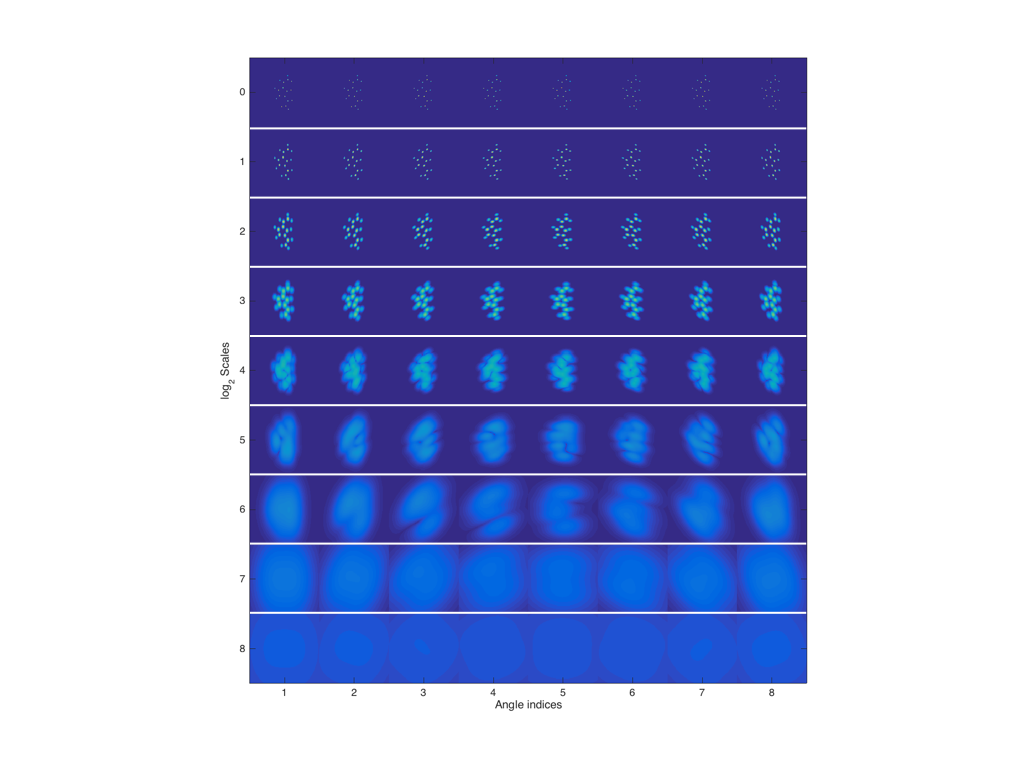}}
\caption{Wavelet modulus transform $| \rho \ast \psi_{j,\theta} |$ of the molecule in \cref{fig: exact vs approx density},
  visualized over $\log_2$ scales $j = 0,
  \ldots, 8$ and eight two dimensional rotations sampled uniformly
  from the half circle $[0, \pi)$. Rotations are indexed on the
  horizontal axis, while scales get progressively larger as one moves down
  the vertical axis. Wavelet invariants such as $\| \rho \ast \psi_{j,
    \cdot} \|_1$ aggregate all of the spatial and rotational
  information from each row into a single wavelet coefficient.}
\label{fig: wavelet modulus first layer visual}
\end{figure}

The wavelet transform modulus is invariant to symmetries. If $\bar \rho(u) = \rho(-u)$ then $|\rho \ast \psi_{j,\theta}(u)| = |\bar \rho \ast \psi_{j,\theta}(u)|$
because $\psi(-u) = \psi^*(u)$. It is 
covariant to translations and rotations but not invariant. 
Translation and rotation invariant dictionaries are
obtained by integrating $| \rho \ast \psi_{j,\theta} (u)|^2$
over the translation and rotation variables:
\begin{equation*}
\|\rho \ast \psi_{j,.} \|_2^2 = \int_{\R^3 \times {[0,\pi]^2}} 
| \rho \ast \psi_{j,\theta} (u)|^2 \, du \,d \theta.
\end{equation*}
The Plancherel formula gives
\begin{equation*}
\|\rho \ast \psi_{j,.} \|_2^2 = 
\frac 1 {(2 \pi)^3} 
\int_{\R^3 \times {[0,\pi]^2}} 
| \hat \rho (\om)|^2\, |\hat \psi (2^j r_{\theta}^{-1} \omega) |^2 \,
d\omega \, d \theta.
\end{equation*}
It computes the energy of $|\hat \rho (\om)|^2$ around the sphere
of radius $2^{-j} |\eta_0|$, over an annulus of width 
of the order of $2^{-j} |\eta_0|$. This frequency
integral is much more delocalized then
the Fourier integral $\|\hat \rho_\alpha \|^2$ which gives
the energy of $|\hat \rho (\om)|^2$ exactly on the sphere of radius $\alpha$.
The index $-j$ can be interpreted as a log radial frequency variable.
These wavelet quadratic invariants have a much 
lower frequency resolution than Fourier invariants
at high frequencies. 

Similarly to the Fourier dictionary \eqref{FourerRep},
the range of radial frequencies is 
defined over an interval $[\epsilon,\epsilon^{-2}]$
for some $\epsilon > 0$. 
Over this interval, there are only 
$3 |\log_2 \epsilon|$  scales defined by 
$ 2 \log_2 \epsilon \leq j < - \log_2 \epsilon = J$. 
For appropriate wavelets, \cref{sec:classical physics}
proves Coulomb energies can be regressed with an $O(\epsilon)$ error 
from these $3 |\log_2 \epsilon|$  wavelet invariants.
Such a regression is much more sparse than a Fourier 
regression which requires $O (\epsilon^{-2})$ terms to obtain an 
$O(\epsilon)$ error. 

To regress molecular energy functionals, which include 
exchange correlation energy terms, like 
the Fourier dictionary \eqref{FourerRep} we include $\Lu$
invariants:
\begin{equation*}
\|\rho \ast \psi_{j,.} \|_1 = \int_{\R^3 \times {[0,\pi]^2}} 
| \rho \ast \psi_{j,\theta} (u)| \, du \,d \theta,
\end{equation*}
at scales $2^j < 2^J$. At the lowest frequencies we get
$\|\rho \ast \phi_{J} \|_1$.
Since $\rho \geq 0$ and $\phi_J \geq 0$ with $\int \phi_J (u) du = 1$,
it results that $\|\rho \ast \phi_{J} \|_1 = \|\rho \|_1$ is the total
electronic charge.
The resulting invariant wavelet dictionary, up to a highest
frequency $\epsilon^{-2}$ is:
\begin{equation} \label{invasnfsdf}
\Theta \rho = \Big\{ \|\rho \|_1~,~\|\rho \ast \psi_{j,.} \|_1 ~,~
\|\rho \ast \psi_{j,.} \|_2^2  \Big\}_{2\log_2 \epsilon \leq j <  J}~.
\end{equation}

As opposed to Fourier, wavelets yield invariants which are stable to 
deformations. It is proved in \cite{mallat:scattering2012} that such wavelet invariants
are Lipschitz continuous to the action of $\bf C^2$ diffeomorphisms. 
Indeed, wavelets are localized in space. Computing the wavelet transform
of a deformed $\rho$ is equivalent to computing the wavelet transform of $\rho$
with deformed wavelets. Since wavelets have a localized support, a deformation
is locally equivalent to a small dilation and rotation, which produces a small
modification of $\Ld$ or $\Lu$ norms. 

\Cref{sec: numerical results} shows that
quantum energy regressions over this wavelet invariant
dictionary produces a 
similar error as regression over Fourier 
invariants \eqref{FourerRep}, but with far fewer terms. The
error is not much reduced because this dictionary does not include enough
invariants to regress the complex behavior of exchange correlation energy terms.

\subsection{Multiscale Scattering Invariants} \label{sec: scattering rep}

A wavelet transform is invertible but integrating its modulus along spatial
and rotation variables does not produce enough invariants.
Wavelet scattering operators have been introduced in \cite{mallat:scattering2012}
to define much richer sets of invariants, 
with higher order multiscale interference terms.

Integrating wavelet modulus coefficients
\begin{equation*}
\|\rho \ast \psi_{j,.} \|_1 = \int_{\R^3 \times {[0,\pi]^2}} 
| \rho \ast \psi_{j,\theta} (u)| \, du \, d \theta,
\end{equation*}
removes the variability of $| \rho \ast \psi_{j,\theta} (u)|$ 
along $u \in \R^3$ 
and $\theta \in {[0,\pi]^2}$. This variability can be captured by computing the wavelet transform
of $| \rho \ast \psi_{j,\theta} (u)|$ 
along both $u$ and $\theta$, which defines a scattering transform
on the roto-translation group
\cite{mallat:rotoScat2013}. In
the following, we concentrate on the variability along $u$ and thus
only compute scattering coefficients along the translation variable $u
\in \R^3$ \cite{mallat:scattering2012}.  

The variations of $| \rho \ast \psi_{j,\theta} (u)|$ 
along $u$ are represented by a second wavelet transform,
which computes convolutions with a second
family of wavelets at different scales $2^{j'}$ with rotations of
angles $\theta + \theta' \in [0,\pi]^2$:
\begin{equation*}
|| \rho \ast \psi_{j,\theta} | \ast \psi_{j',\theta'+\theta} (u)|.
\end{equation*}
Such a coefficient computes the first order interferences
$| \rho \ast \psi_{j,\theta}(u) |$ of the variations 
of $\rho$ at a scale $2^{j}$ along the orientation $\theta$.
These interference amplitudes interfere again by ``emitting'' a new
wavelet of scale $2^{j'}$ whose angle is incremented by $\theta'$.
It yields second order interferences, whose amplitudes 
are measured at each position $u$.
The Fourier transform of 
$| \rho \ast \psi_{j,\theta} (u)|$ has an energy mostly concentrated at frequencies 
$|\omega| < 2^{-j} |\eta_0|$. As a result, the amplitude of these second order coefficients is non-negligible only if $2^{j'} > 2^j$.

Invariance to translations and rotations is obtained by integrating
along $(u,\theta) \in \R^3 \times {[0,\pi]^2}$:
\begin{equation*}
\|| \rho \ast \psi_{j,.} | \ast \psi_{j',\theta'+.} \|_1
= \int_{\R^3 \times {[0,\pi]^2}} 
|| \rho \ast \psi_{j,\theta} (u)| \ast \psi_{j',\theta'+\theta} (u)| \, du \, d \theta.
\end{equation*}
A coefficient $\|| \rho \ast \psi_{j,.}| \ast \psi_{j',\theta'+.} \|_1$
computes the interaction amplitudes of 
variations of $\rho$ at a scale $2^{j}$ along the orientation $\theta$,
located at a distance about $2^{j'}$ within an orientation 
$\theta' + \theta$.
It provides a rich set of invariants, which depend
upon the multiscale geometry of the charge density $\rho$.

Quadratic terms are obtained by integrating the squared amplitude of 
second order coefficients:
\begin{equation*}
\|| \rho \ast \psi_{j,.} | \ast \psi_{j',\theta'+.} \|_2^2
= \int_{\R^3 \times {[0,\pi]^2}} 
|| \rho \ast \psi_{j,\theta} (u)| \ast \psi_{j',\theta'+\theta} (u)|^2 \, du \, d \theta.
\end{equation*}
A scattering transform further iterates this procedure by computing higher
order terms obtained through convolutions with third order wavelets and more.
Because it cascades wavelet transforms, it is proved in \cite{mallat:scattering2012} 
that a scattering transform remains Lipschitz continuous to diffeomorphism 
actions. In this paper, we only keep second order terms for regressions.

Wavelet scales are computed over a frequency range defined by
$2^J > 2^{j'} > 2^j \geq \epsilon^2$.
Adding second order terms to the invariant wavelet 
dictionary \eqref{invasnfsdf} defines a second order wavelet scattering
dictionary:
\begin{eqnarray}
\label{invasnfsdf2}
\Theta \rho &=& \Big\{ \| \rho \|_1\, , \, \|\rho \ast \psi_{j,.} \|_1 ~,~
\|\rho \ast \psi_{j,.} \|_2^2 ~,~ \\
& &
\nonumber
\|| \rho \ast \psi_{j,.} | \ast \psi_{j',\theta'+.} \|_1~,~
\|| \rho \ast \psi_{j,.} | \ast \psi_{j',\theta'+.} \|_2^2
\Big\}_{2\log_2 \epsilon \leq j < j' <  J, \, \theta' \in [0,\pi]^2}.
\end{eqnarray}
This dictionary is invariant to permutations of atom indices, to isometries
and is Lipschitz continuous to deformations. The next section shows that it 
considerably outperforms Fourier modulus and wavelet representations for
quantum energy regressions.

Second layer scattering coefficients are a priori indexed by a four
dimensional parameter $(j, j', \theta') \in \Z \times \Z \times
[0,\pi]^2$. \Cref{sec: scat 2nd layer symmetry} proves that
the two dimensional angle parameter $\theta'$ can be reduced to a one
dimensional angle parameter. Consider the second order term $|| \rho
\ast \psi_{j, \theta} | \ast \psi_{j', \theta' + \theta}|$. If $\eta \in S^2$ is the axis
corresponding to $\theta$ and $\eta' \in S^2$ is the axis
corresponding to $\theta' + \theta$, then after integrating over
$\theta$ the only distinguishing parameter is the angle between $\eta$
and $\eta'$ (which is constant for all $\theta$). Indeed, since
scattering coefficients \eqref{invasnfsdf2} are invariant to rigid
motions, a single rotation applied to both $\eta$ and $\eta'$ will not
change their value. 

\Cref{sec: scat 2nd layer symmetry} also precisely describes the
symmetries of the wavelet $\psi$ and the corresponding wavelet modulus
transform $| \rho \ast \psi_{j, \theta}|$ and second order scattering
transform $|| \rho \ast \psi_{j, \theta}| \ast \psi_{j', \theta' +
  \theta}|$. The approach in the appendix is used to not only prove
the aforementioned result, but also gives insight into how
one can efficiently compute scattering coefficients by fully taking into account the
built in symmetries. 

\section{Numerical Energy Regression of Planar Molecules} 
\label{sec: numerical results}

We compare the performance of Coulomb kernel regressions with Fourier, wavelet
and scattering regressions, on two databases of planar organic
molecules \textcolor{black}{which are in their equilibrium configuration.}
The planar symmetry allows us to reduce computations in two dimensions,
as explained in the next section. 

\textcolor{black}{We remark that in most applications the equilibrium structure of the
molecule is not known, but rather must be found by computing the
partial derivatives of the total energy with respect to the atomic
positions $\{ r_k \}_k$, i.e. the ionic forces. If the dictionary
functions $\{ \phi_k \}_k$ are smooth and the gradient $\nabla \phi_k$
can be computed for each $k$, then the ionic forces can be
incorporated into the regression via:
\begin{equation*}
\nabla \tilde{f}(x) = \sum_k w_k \nabla \phi_k (x).
\end{equation*}
The gradients $\{ \nabla \phi_k \}_k$ can be computed analytically for
the scattering dictionary. We omit the details here, since they are
not used in these numerical experiments. However, such computations
can be used, for example, to extend scattering regressions for the purpose
of calculating equilibrium configurations corresponding to local
minima in the potential energy surface.}

All numerical results can be
reproduced by visiting:
\begin{equation*}
\centerline{\href{https://github.com/matthew-hirn/ScatNet-QM-2D}{\texttt{https://github.com/matthew-hirn/ScatNet-QM-2D}}}
\end{equation*}
and downloading the \texttt{ScatNet-QM-2D} package, which includes all
software and data.

\subsection{Representation of Planar Molecules}
\label{planar}

For planar molecules $x$, the positions $\{ r_k \}_k$ of the 
atom nuclei are in a two-dimensional plane. Since the energy is invariant
to rotations, 
it only depends upon the relative nuclei positions in this plane.
The molecule geometry can thus be represented in two as opposed
to three dimensions. Suppose that the $r_k$ correspond to the two-dimensional
positions of the nuclei in the plane $\R^2$. The electronic density 
$\rho[x] = \sum_k \rho[z_k] (u - r_k)$ can then be restricted to $\R^2$.
The invariant dictionary $\Phi (x) = \Theta \rho[x]$ is 
computed by applying an operator 
$\Theta$ on $\rho[x](u)$, for $u \in \R^2$ as opposed to $\R^3$.
Fourier, wavelet and scattering dictionaries are therefore computed in
two dimensions. Although it does not change the nature of the
regression problem, it reduces computations in numerical experiments. 

There are several ways to restrict the isolated electronic densities
$\rho [z_k]$ to $\R^2$. We derive the two dimensional density from the
three dimensional one by condensing the charge mass on the sphere of
radius $\alpha$ to the circle of radius $\alpha$. Note that since
the isolated charge densities are radially symmetric, the value $\rho
[z_k] (u)$ depends only on $|u|$, i.e., $\rho [z_k] (u) = \rho [z_k]
(\alpha)$ for $\alpha = |u|$. The total charge $z_k$ can therefore be decomposed
over spheres of radius $\alpha$ as follows:
\begin{equation} \label{eqn: charge density sphere}
z_k = \int_{\R^3} \rho [z_k] (u) \, du = \int_0^{\infty} 4 \pi
\alpha^2 \rho [z_k] (\alpha) \, d\alpha.
\end{equation}
A radially symmetric two-dimensional electronic density,
$\rho_{\mathrm{2D}} [z_k] : \R^2 \rightarrow \R$, with total charge
$z_k$, can be decomposed similarly:
\begin{equation} \label{eqn: charge density circle}
z_k = \int_{\R^2} \rho_{\mathrm{2D}} [z_k] (u) \, du = \int_0^{\infty} 2\pi
\alpha \rho_{\mathrm{2D}} [z_k] (\alpha) \, d\alpha.
\end{equation}
Equating the two integrands of \eqref{eqn: charge density sphere} and
\eqref{eqn: charge density circle} yields:
\begin{equation*}
\forall \, \alpha \in [0, \infty), \quad \rho_{\mathrm{2D}} [z_k]
(\alpha) = 2 \alpha \rho [z_k] (\alpha).
\end{equation*}

In two dimensions, reflections are not generated via $-I$ (where $I$ is
the identity matrix) as in $\R^3$, but rather through a fixed reflection about a line,
for example the horizontal or vertical axis. Therefore, if
$\widetilde{\rho}$ is a reflection of $\rho$, 
\begin{equation*}
\| |\widetilde{\rho} \ast \psi_{j, \cdot}|
\ast \psi_{j', \theta' + \cdot} \|_p^p =  \| |\rho \ast \psi_{j,
  \cdot}| \ast \psi_{j', -\theta' + \cdot} \|_p^p,
\end{equation*}
where the angles $\theta, \theta'$ now parameterize the half circle
$S^1$, taken as $(-\pi/2, \pi/2]$. Two dimensional reflections thus reflect
the second order angle parameter $\theta'$, mapping it to
$-\theta'$. Invariant second order scattering coefficients are
obtained by taking the average:
\begin{equation*}
\forall \, \theta' \in [0, \pi/2], \enspace p=1,2, \quad
\frac{1}{2} \sum_{\varepsilon = -1, 1} \| |\rho \ast \psi_{j, \cdot}|
\ast \psi_{j', \, \varepsilon\theta' + \cdot} \|_p^p.
\end{equation*}

In numerical computations, electronic densities
$\rho[x]$ are sampled at sufficiently small intervals $\epsilon$.
In the following, we normalize this sampling interval to $1$.
Electronic densities $\rho[x]$ are sampled over a square of $2^{2J}$ samples,
with $2^J = 2^{9}$.
The Fourier dictionary \eqref{FourerRep} is computed with a two-dimensional 
Fourier transform. The modulus and squared Fourier transform modulus
are integrated over the unit frequency circle $S^1$,
to achieve rotation invariance. The resulting dictionary has $2^{J-1}
= 2^8$ radial Fourier invariants per density channel
(as described in \cref{sec:density}).

The wavelet dictionary \eqref{invasnfsdf}
is computed with a wavelet transform
over $J = 9$ scales $2^j$, $j = 0, \ldots, 8$, with $\Lu$ and $\Ld$ norms.
The angular variable is also integrated over the
circle $S^1$. The wavelet
dictionary thus has $2 J + 1 = 19$ wavelet invariants per density channel.

The scattering dictionary 
\eqref{invasnfsdf2} also includes second order invariants,
which are indexed by two scale indices $0 \leq  j < j' <  J$.
In two dimensions, the angle $\theta$ is discretized over $L$
values in $(-\pi/2, \pi/2]$. In numerical computations, $L = 16$.
Second order terms include $\Lu$ and $\Ld$
invariants. The total number of second
order scattering functions is thus $(L/2 + 1) J (J-1)$.
The wavelet scattering dictionary therefore has
$1 + 2J + (L/2 + 1) J (J-1) = 667$ invariants per density channel.

\textcolor{black}{For the parameters listed previously and using an iMac
  desktop with a 4 GHz Intel Core i7 processor and 32 GB
1600 MHz DDR3 random access memory (RAM), the walltime cost of
computing the $\mathbf{L}^1$ and $\mathbf{L}^2$ scattering
coefficients for a single molecule is approximately 45 seconds. The
wavelet filters $\psi_{j, \theta}$ are implemented as a filter bank,
which requires a one time computation whose walltime is approximately
25 seconds. The wavelet coefficients $\rho \ast
\psi_{j, \theta}(u)$ are computed in frequency, leveraging the fact
that $\widehat{\rho \ast \psi_{j, \theta}}(\omega) =
\widehat{\rho}(\omega) \widehat{\psi}_{j, \theta}(\omega)$. Using the
fast Fourier transform (FFT) and inverse FFT, the cost per molecule is
$O(L J^2 2^{2J})$ to compute the wavelet invariants. Scattering
invariants are computed in a similar fashion, but with a computational
cost of $O(L^2 J^3 2^{2J})$ that reflects the additional second order
invariants. Increasing the angle parameter $L$ increases the accuracy
of the approximate integral over the rotation group and also increases
the number of second order scattering invariants; however, it does
not in general depend on the type of molecule. The scale parameter $J$ grows with
the size of the largest molecule. Typically in two dimensions the relationship is
$J \sim \frac{1}{2} \log_2 K$, where $K$ is the number of atoms, although specific
types of molecules may scale differently (for example, a string of
atoms arranged linearly will scale as $J \sim \log_2 K$, although in
this case one could replace the square grid with a rectangular one).}

\subsection{Numerical Comparison Over Planar Molecules} 
\label{sec: numerical results and comparisons}

We compare the regression performance of Fourier, wavelet and
scattering dictionaries over two databases of planar molecules.
The molecular atomization energies in these
databases were computed using the hybrid density functional
PBE0 \cite{Adamo:PBE0model1999}.
The errors of such hybrid DFT methods are in the range
of $3$ to $5$ kcal/mol \cite{Ramakrishnan:2014aa, kerwin:DFT2016} relative
to experimental energies (mean absolute error). The first database includes
$454$ nearly planar molecules among the $7165$ molecules of the QM7
molecular database \cite{hansen:quantumChemML2013}. We also created a second database
of $4357$ strictly planar molecules, which we denote QM2D. Both databases consist of
a set of organic molecules composed of Hydrogen, Carbon,
Nitrogen, Oxygen, Sulfur and additionally Chlorine in the case of QM2D. The
molecules featured in these databases cover a large spectrum of representative organic groups typically found in chemical compound space.

\textcolor{black}{To evaluate the precision of each regression algorithm, each
data set is broken into five representative folds,
and all tests are performed using five fold cross validation as
described in \cite{hansen:quantumChemML2013}.
We reserve four folds for training and model selection, and the fifth fold for
testing. We do so for each of the five possible permutations, and
report average regression errors over the five test folds with their
standard deviation.} We compute both a mean absolute error
(MAE) which is the average of the absolute value error over the testing fold,
and the root mean-square error (RMSE) which is the square root of the
average squared error.

For the Coulomb matrix kernel regression described 
in \cref{sec: linear regression with coulomb kernels},  
we use a collection of eight Coulomb matrices per molecule, as in
\cite{hansen:quantumChemML2013}. The width $\sigma$ 
of the kernel \eqref{eqn: coulomb laplace kernel} and the ridge
regression parameter $\lambda$ in \eqref{energy} are selected by cross
validation, according to the algorithm described in
\cite{hansen:quantumChemML2013}. The algorithm was validated by recovering the
numerical precision given in \cite{hansen:quantumChemML2013} over the full QM7
database which includes $7165$ molecules. In these experiments, we restrict 
the training database to a subset of the 454 planar molecules in QM7. 
Coulomb regression errors
reported in \cref{fig: MAE} are thus larger than in  
\cite{hansen:quantumChemML2013}.

\begin{table}
\caption{Average Error $\pm$ Standard Deviation over the five folds in
  kcal/mol, with a Coulomb kernel regression and sparse regressions in
  Fourier, wavelet and scattering dictionaries. For scattering
  dictionaries, regressions are computed for several densities
  $\rho[x]$, with a Dirac model, atomic densities calculated by DFT
  and separate core/valence densities.}
\label{fig: MAE}
\footnotesize
\renewcommand{\arraystretch}{1.5}
\centerline{
\begin{tabular}{|c|c||c|c|c||c|c|c|}
\hline
\multicolumn{2}{|c||}{}& \multicolumn{3}{|c||}{2D molecules from QM7}
  & \multicolumn{3}{|c|}{QM2D} \\
\cline{3-8}
\multicolumn{2}{|c||}{}& $\overline M$ & MAE & RMSE & $\overline M$ & MAE & RMSE \\
\hline
\hline
\multicolumn{2}{|c||}{Coulomb Matrix} & N/A & 6.7 $\pm$ 2.8 & 14.8 $\pm$ 12.2 & N/A &
2.4 $\pm$ 0.1 & 5.4 $\pm$ 2.5 \\
\hline
Fourier & Dirac + Core/Valence & 73 $\pm$ 27 & 6.7 $\pm$ 0.7 & 
8.5 $\pm$ 0.9 & 244 $\pm$ 52 & 5.3 $\pm$ 0.2 & 7.2 $\pm$ 0.4 \\
\hline
Wavelet & Dirac + Core/Valence & 38 $\pm$ 13 & 6.9 $\pm$ 0.6 & 9.1
$\pm$ 0.8 & 46 $\pm$ 4 & 5.4 $\pm$ 0.1 & 7.1 $\pm$ 0.3 \\
\hline
Scattering & Dirac & 43 $\pm$ 32 & 15.9 $\pm$ 1.7 & 23.9 $\pm$ 7.2 &
262 $\pm$ 81 & 9.2 $\pm$ 0.1 & 11.7 $\pm$ 0.1\\
& Atomic & 99 $\pm$ 40 & 4.0 $\pm$ 0.5 & 5.2 $\pm$ 0.6 & 299 $\pm$ 56
& 2.7 $\pm$ 0.0 & 3.8 $\pm$ 0.2 \\ 
& Dirac + Atomic & 99 $\pm$ 35 & 4.1 $\pm$ 0.3 & 5.7 $\pm$ 0.6 & 
373 $\pm$ 76 & 2.3 $\pm$ 0.1 & 3.4 $\pm$ 0.3 \\
& Core/Valence & 107 $\pm$ 41 & 3.2 $\pm$ 0.1 & 4.5 $\pm$ 0.2 & 
468 $\pm$ 108 & 1.6 $\pm$ 0.1 & 2.6 $\pm$ 0.5 \\
& Dirac + Core/Valence & 97 $\pm$ 45 & 3.5 $\pm$ 0.2 & 5.0 $\pm$ 0.6 &
450 $\pm$ 91 & 1.6 $\pm$ 0.0 & 2.5 $\pm$ 0.4 \\
\hline
\end{tabular}
}
\end{table}

Sparse $M$-term regressions are computed in Fourier, wavelet
and scattering dictionaries $\Phi (x) = ( \phi_k (x) )_k = \Theta \rho[x]$,
using the orthogonal least-square regression algorithm described in
\cref{sec: sparse regression by ols}:
\begin{equation} \label{eqn: M-term regression model}
\tilde f(x) = \sum_{m=1}^M w_{k_m} \phi_{k_m} (x).
\end{equation}
\Cref{fig:  M-term RMSE curves} compares the value of the
RMSE error as a function of the dimensionality $M$ of the regression model, 
for each dictionary. In each case, the electronic
density $\rho$ is computed with \eqref{eqn: exact atom rho}, for
core/valence densities $\rho[z_k]$ computed by DFT as well as the
Dirac density. Results show
that the RMSE error is similar for 
the Fourier dictionary and the wavelet dictionary, although the
wavelet dictionary needs significantly fewer terms to achieve the same
minimum.

\begin{figure}
\centerline{\includegraphics[width=4.0in]{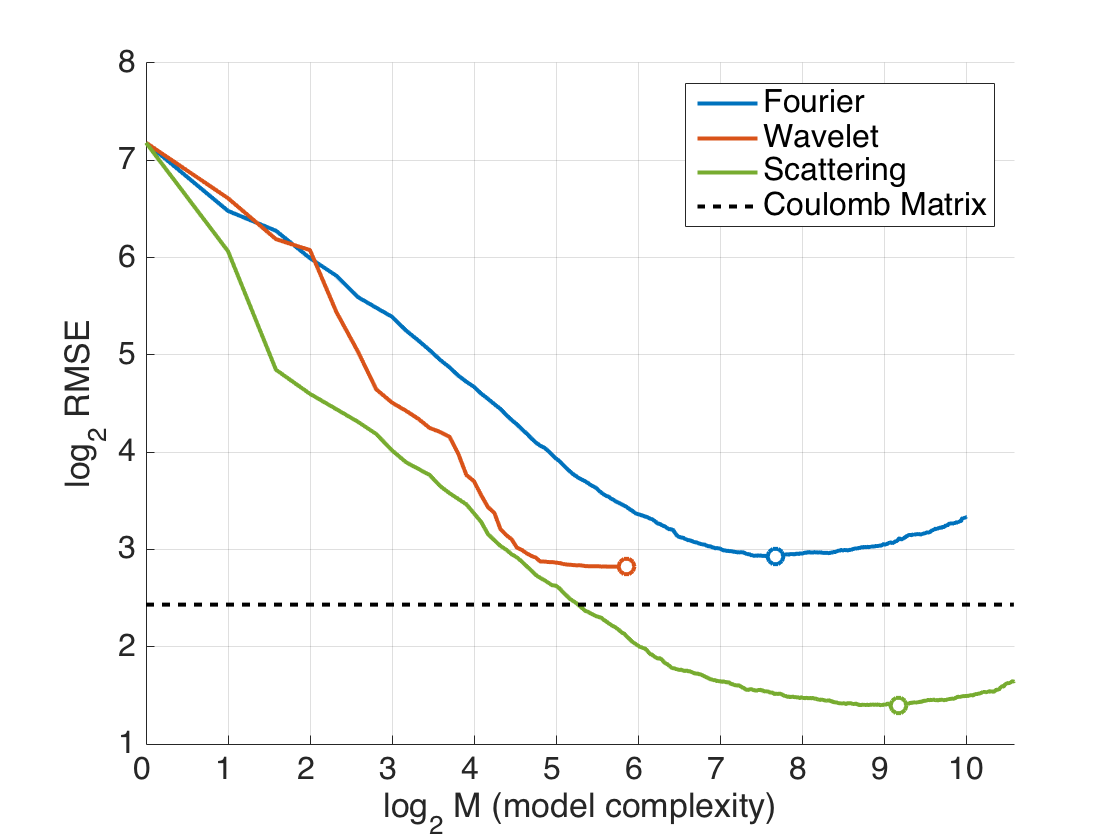}}
\caption{$\log_2 {\rm RMSE}$ of the
root mean square errors on the QM2D database of 4357 molecules, as a
function of $\log_2 M$,
where $M$ is the number of regression terms. Coulomb is 
  the black dashed line, Fourier is blue, 
  wavelet is orange, and scattering is green. The circle
  indicates the optimal value of $M$.}
\label{fig: M-term RMSE curves}
\end{figure} 

The regression error can be decomposed into a bias error due to the inability
to precisely approximate $f(x)$ from a linear expansion in the dictionary
$\Phi(x)$, plus a variance term due to errors when optimizing the weights
$w_k$ from a limited training set. For the Fourier and wavelet
dictionaries, observe that the error
obtained on the planar molecules from the QM7 database of $454$ molecules
is only about 25\% larger than the error on the QM2D database
of $4357$ molecules. On QM2D, it is also significantly bigger than the
error obtained by Coulomb kernel regressions, which saw a drastic
reduction in regression error when going from the smaller to the
larger database. The fact that the overall error for Fourier and
wavelet dictionaries is not significantly
reduced when increasing the database size by a factor of $10$
indicates that the bias error dominates the variance error.
It shows that the dictionary $\Phi(x)$ must be
complemented by more invariants 
to define a better approximation space.

Scattering regressions improve wavelet regressions by adding second
order invariants. When the database size increases
from $454$ to $4357$ molecules the scattering error decreases by
approximately 50\%, and is smaller than the Coulomb kernel regression error. 
The bias term has been reduced to match
the variance term, which gets smaller when increasing the database size.
With separated valence and core electronic densities,
\cref{fig: MAE} shows that the RMSE error of a scattering regression
is $2.6$ kcal/mol. Observe also that the MAE of the 
core/valence scattering regression is below 2.0 kcal/mol on the larger
QM2D database, relative to the DFT energies on which the regression
was trained. The consensus is that DFT has a hard time getting below 2
-- 3 kcal/mol in MAE \cite{kerwin:DFT2016}, and these
errors correspond to very recent methods. Thus if the molecular
energies were experimental and the scattering
regression performed similarly, scattering regressions would be in the
same range of errors as DFT on the QM2D database, which is promising. 

\Cref{fig: MAE} additionally indicates that the standard deviation of the
scattering error is smaller than the standard deviation of the Coulomb matrix
error. Furthermore the largest scattering errors are significantly
smaller than the largest Coulomb matrix errors. The five largest Coulomb matrix
regression errors on individual molecules from QM2D are 224, 185, 83, 70, and
53 (all in kcal/mol), whereas the five largest scattering errors are
62, 47, 40, 27, and 26 kcal/mol. These larger errors may result from
Coulomb matrix instabilities, which originate from the fact that
unsorted Coulomb matrices are not invariant to permutations; see the
end of \cref{sec: linear regression with coulomb kernels} for a
more detailed discussion.

Increasing the model dimension $M$ increases the estimation variance
but reduces the bias. \textcolor{black}{The optimal choice of $M$
results from a bias-variance trade-off, and it is estimated from the
training data. Rather than estimate the
optimal value of $M$ via four fold cross validation on the training data, we apply a
bagging algorithm to learn multiple models, each with their own model
dimension, which are then averaged together to reduce the variance
error.} Each iteration of the bagging algorithm uniformly randomly
selects $\beta \%$ ($0 < \beta < 100$) of the training data to train
the model using the orthogonal least square algorithm, up to a large
value $M_0$ for the model dimension. The algorithm then selects a
model dimension $\overline{M} \leq M_0$ which minimizes the MAE or the
RMSE on the remaining $(100 - \beta)\%$ of the training data. The
resulting $\overline{M}$-term regression is then computed on the test
data. The procedure is repeated $X$ times, and the regressions on the
test data are averaged together to give the final regressed
energies. In numerical experiments $\beta = 90$ for the $454$ nearly
planar molecules from QM7 while $\beta = 80$ for the larger QM2D data
set; the number of bags is $X = 10$ for both data sets. \Cref{fig: MAE}
reports the mean value of $\overline M$ over the $10$ bags, for the
Fourier, wavelet and scattering dictionaries, as well as the
corresponding mean absolute error (MAE) and root-mean-square error
(RMSE), with the standard deviation over the five folds.

For a scattering transform, the model dimension $\overline{M}$ is
approximately $7.5$ times the number of parameters of a molecule.
Indeed, each atom is specified by $3$ variables, 
its two-dimensional position and its charge, so a
molecule of $20$ atoms has $60$ free variables. 
The dominant scales of the wavelets selected by the orthogonal least
square regression go from a small scale, localized around the center
location of each atom, up to a large scale on the order of the
diameter of the molecule. Scales beyond the diameter of the molecule
are used sparingly. It is thus sufficient in the training phase to
compute second order scattering interaction terms at the corresponding
scales $2^j$ and $2^{j'}$, which limits computations. In the testing
phase, only the $\overline{M}$ scattering paths used in the regression
need be computed for new configurations, thus further limiting
computations. 

\begin{figure}
\centerline{\includegraphics[width=3.75in]{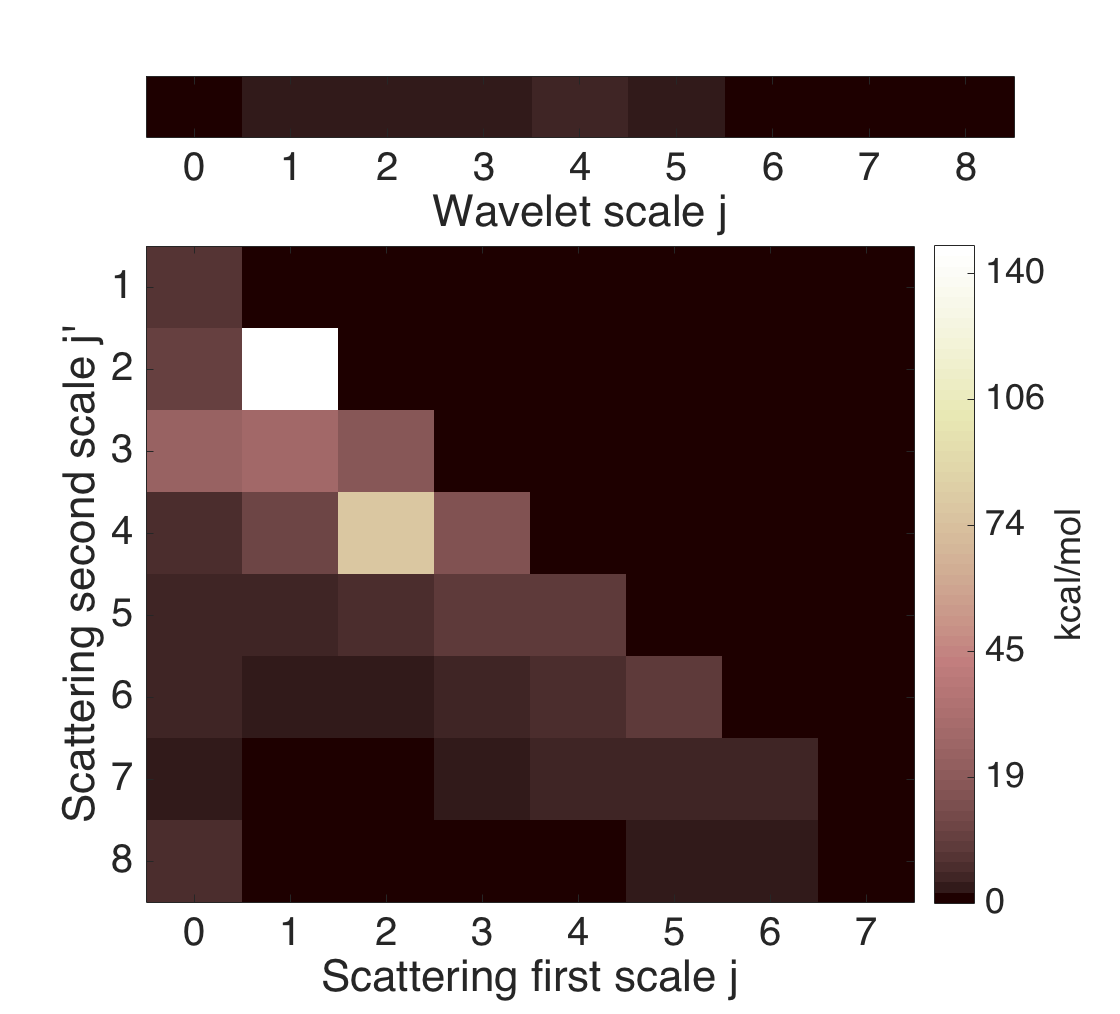}}
\caption{The average scattering weights $\mathbb{E}_l \left(
    \frac{1}{\sqrt{n}} |\widetilde{w}_k^l| \right)$ aggregated by scale. Lighter
  boxes indicate large amplitudes, darker boxes are small amplitudes. The
  upper row corresponds to the weights of wavelet coefficients $\|
  \rho \ast \psi_{j, \cdot} \|_p^p$, while the lower matrix shows the
  weights of scattering coefficients $\frac{1}{2} \sum_{\varepsilon =
    -1, 1} \| | \rho \ast \psi_{j, \cdot} |
  \ast \psi_{j', \, \varepsilon \theta' + \cdot} \|_p^p$ for $j < j'$.}
\label{fig: scat histo}
\end{figure}

We further analyze the distribution of scattering coefficients used in
the regression by computing the expected
value of the squared amplitudes of the weights
$(\widetilde{w}_{k_m})_{m \leq M}$ for the orthonormal scattering dictionary
$(\phi_{k_m}^m)_{m \leq M}$ (\cref{sec: sparse regression by
  ols}), over the larger QM2D database. More
precisely, a scattering regression model \eqref{eqn: final M-term ols
  regression} with $M = 2^9$ nonzero weights was trained from 80\% of
the QM2D database, uniformly randomly selected. The value $M = 2^9$
was selected based on \cref{fig: M-term RMSE curves}, which
indicates it is approximately the optimal value of $M$ for
scattering regressions on the QM2D database. The mean squared
amplitude of each weight $\widetilde{w}_{k_m}$ over the sub-database
$\{ x_i \}_{i \leq n}$ is computed as:
\begin{equation*}
\mathbb{E}_x ( |\widetilde{w}_{k_m} \phi_{k_m}^m (x)|^2 ) =
\frac{1}{n} \sum_{i=1}^{n} |\widetilde{w}_{k_m} \phi_{k_m}^m (x_i)|^2
= \frac{1}{n} |\widetilde{w}_{k_m}|^2.
\end{equation*}
The process was repeated $10^3$ times over different draws of the
training set, yielding a set of weights $(\widetilde{w}_k^l)_{k,l}$,
where $k$ indexes the scattering coefficient and $l = 1, \ldots, 10^3$
indexes the draw. If on the $l^{\text{th}}$ draw a scattering coefficient $\phi_k$ is not selected
within the first $2^9$ iterations of the orthogonal least square
algorithm, the corresponding weight is set to zero, $\widetilde{w}_k^l
= 0$. The resulting $10^3$ root mean squared amplitudes for each $k$ are averaged
over the draws,
\begin{equation} \label{eqn: rmsa over bags}
\mathbb{E}_l \left( \frac{1}{\sqrt{n}} |\widetilde{w}_k^l| \right) =
10^{-3} \sum_{l=1}^{10^3} \frac{1}{\sqrt{n}} |\widetilde{w}_k^l|.
\end{equation}

\Cref{fig: scat histo} displays the values \eqref{eqn: rmsa over
  bags} summed according to their scale, meaning that they were aggregated
over the non-scale scattering parameters corresponding to the density (Dirac,
core, valence), the norm ($\Lu$, $\Ld$), and the angle $\theta'$ for
second order coefficients. From this figure we observe:
\begin{enumerate}[topsep=5pt, itemsep=5pt]

\item
The dominant weights are on second layer scattering coefficients,
giving further numerical validation to their importance. Amongst them,
the weights corresponding to $(j, j') = (1,2)$ have the largest value by
a significant margin. Further numerical investigation shows that the
first scattering coefficient selected in all $10^3$ iterations is $\|
\valrho \|_1$ (not shown in \cref{fig: scat histo}) and it's
corresponding weight is approximately $1475$
kcal/mol in amplitude, which is the order of magnitude of the energy of a single
molecule. The second coefficient selected, however, is always $\|
|\valrho \ast \psi_{1, \cdot}| \ast \psi_{2, 0 + \cdot} \|_1$, with
corresponding mean weight $\mathbb{E}_l \left( \frac{1}{\sqrt{n}}
  |\widetilde{w}_k^l| \right) \approx 129$ kcal/mol, which
constitutes approximately 90\% of the energy of all second order
weights corresponding to scales $(j, j') = (1,2)$.

\item
The majority of the energy in the second layer coefficients
$\frac{1}{2} \sum_{\varepsilon = -1, 1} \| | \rho \ast \psi_{j, \cdot} | \ast
\psi_{j', \, \varepsilon \theta' + \cdot} \|_p^p$ is concentrated along
the band of scales $j < j'$ with $|j - j'| \leq 3$. It can be
interpreted physically as an interaction between two scales, for
example between micro- and meso-scale interactions or meso- and
macro-scale interactions. This type of coupling property was also
observed for multifractal stochastic processes in
\cite{bruna:scatMoments2015}. Practically, it means that to regress
the majority of the energy, only second order scattering coefficients
within the band $| j - j' | \leq 3$ need to be computed. 

\item
Wavelet coefficients $\| \rho \ast \psi_{j, \cdot} \|_p^p$ contain
significantly less energy than second order scattering
coefficients. Amongst these first order coefficients, micro- and
meso-scales ($j = 0, \ldots, 5$) are used more than large scale
wavelets ($j \geq 6$). 

\end{enumerate}

Building upon the first remark, we observe that the first five scattering
coefficients are constant across all $10^3$ draws of the training
set. They are:
\begin{enumerate}[topsep=5pt, itemsep=5pt]

\item
$\| \valrho \|_1$

\item
$\| | \valrho \ast \psi_{1, \cdot} | \ast \psi_{2, 0 + \cdot} \|_1$

\item
$\frac{1}{2} \sum_{\varepsilon = -1, 1} \| | \valrho \ast \psi_{2,
  \cdot} | \ast \psi_{4, \, \varepsilon \frac{5\pi}{16} + \cdot} \|_2^2$

\item
$\| | \valrho \ast \psi_{1, \cdot} | \ast \psi_{3, \frac{\pi}{2} +
  \cdot} \|_1$

\item
$\frac{1}{2} \sum_{\varepsilon = -1, 1} \| | \valrho \ast \psi_{0,
  \cdot} | \ast \psi_{3, \, \varepsilon \frac{3\pi}{16} + \cdot} \|_1$

\end{enumerate}
Observe that after the first coefficient, the next four coefficients
are second order scattering coefficients that all lie within the band
$|j - j'| \leq 3$. Furthermore, amongst the first five coefficients, all
utilize the valence density and four out of five are $\Lu$
norms. These five coefficients serve as a microcosm for trends
observed amongst the first $2^9$ selected scattering coefficients,
some of which were described above in reference to \cref{fig:
  scat histo}, while other trends are described below and in
\cref{fig: aggregate scat magnitudes}. Given that these five coefficients are
universally selected global invariants, it is an interesting open
question, particularly for the second order terms, to understand how
these coefficients relate to the chemical properties of the molecules
in the database. After the fifth selected term,
the orthogonal least square algorithm branches out. There are four possibilities
for the sixth coefficient, although all are $\Lu$ norm
second order coefficients. By the seventh coefficient, both $\Lu$ and
$\Ld$ second order coefficients are possible. An order one wavelet
coefficient is selected, at the earliest, as the $11^{\text{th}}$ coefficient.

Average scattering weights $\mathbb{E}_l \left( \frac{1}{\sqrt{n}}
  |\widetilde{w}_k^l| \right)$ were also summed according to other parameter
dimensions in addition to their scale. In Figure \ref{fig: order 0, 1,
  2 percentages} they are
organized according to the order of the corresponding scattering
coefficient, by aggregating along $k$ the values $\mathbb{E}_l \left(
  \frac{1}{\sqrt{n}} |\widetilde{w}_k^l| \right)$ which
correspond to order zero coefficients of the form $\| \rho \|_1$,
order one coefficients of the form $\| \rho \ast \psi_{j, \cdot}
\|_p^p$, and order two coefficients of the form $\frac{1}{2}
\sum_{\varepsilon = -1, 1} \| | \rho \ast
\psi_{j, \cdot} | \ast \psi_{j', \, \varepsilon \theta' + \cdot} \|_p^p$. The summed
magnitude of each type of coefficient is plotted as a function of the
model dimension $M$. As expected from the previous discussion, the
plot indicates that an order zero
coefficient is selected first, which is in fact $\| \valrho \|_1$, and
that the magnitude of its corresponding weight dominates the
magnitudes of all subsequent weights. Such a coefficient
captures the total valence electronic charge of each molecule. Second
order scattering terms in turn dominate first order wavelet terms,
indicating once again their usefulness. 

\begin{figure}
\centerline{
\subfigure[Order zero, one, and two average scattering weights.]{
\includegraphics[width=3.0in]{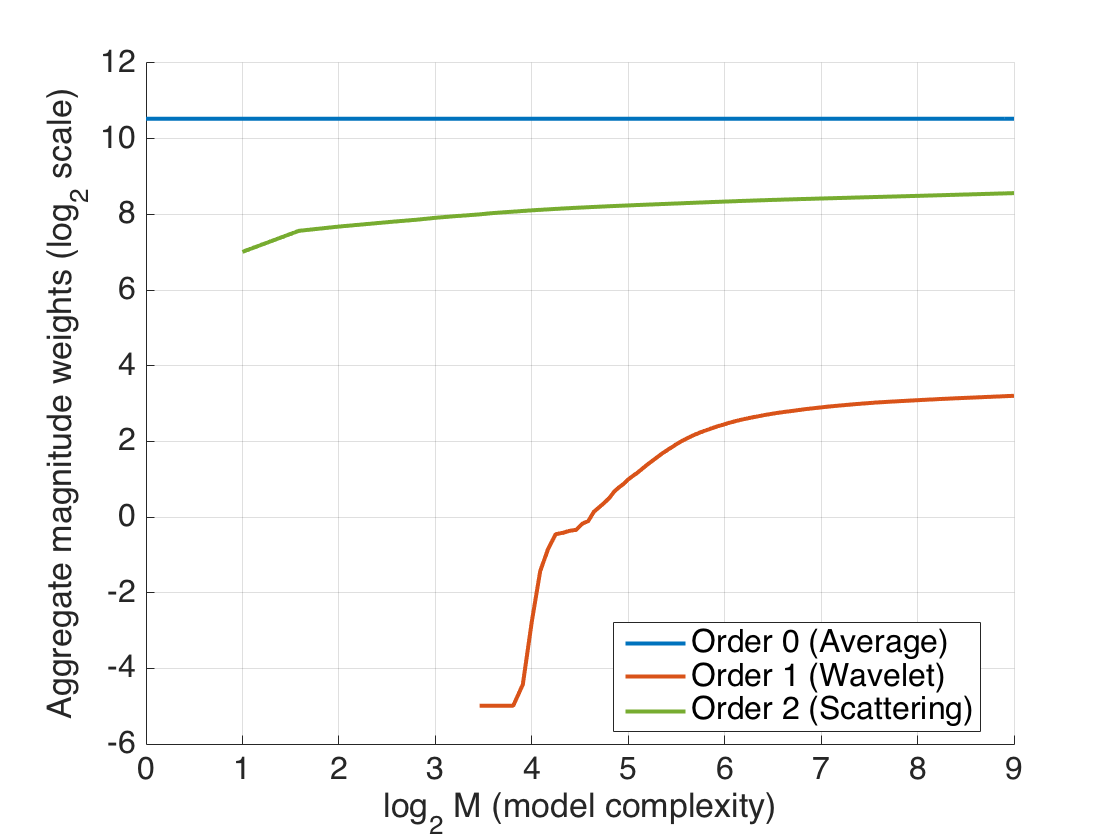}
\label{fig: order 0, 1, 2 percentages}
}}
\centerline{
\subfigure[$\Lu$ and $\Ld$ average scattering weights.]{
\includegraphics[width=3.0in]{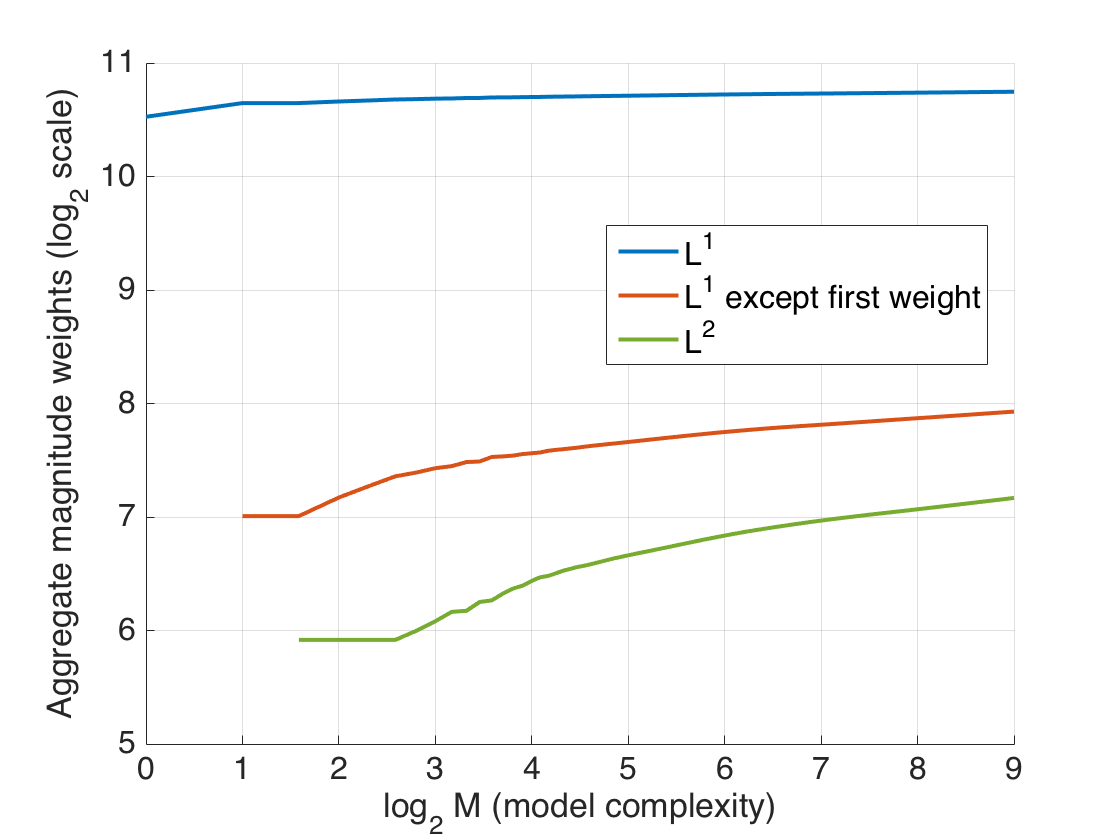}
\label{fig: L1 L2 percentages}
}
\subfigure[Dirac, core, and valence average scattering weights.]{
\includegraphics[width=3.0in]{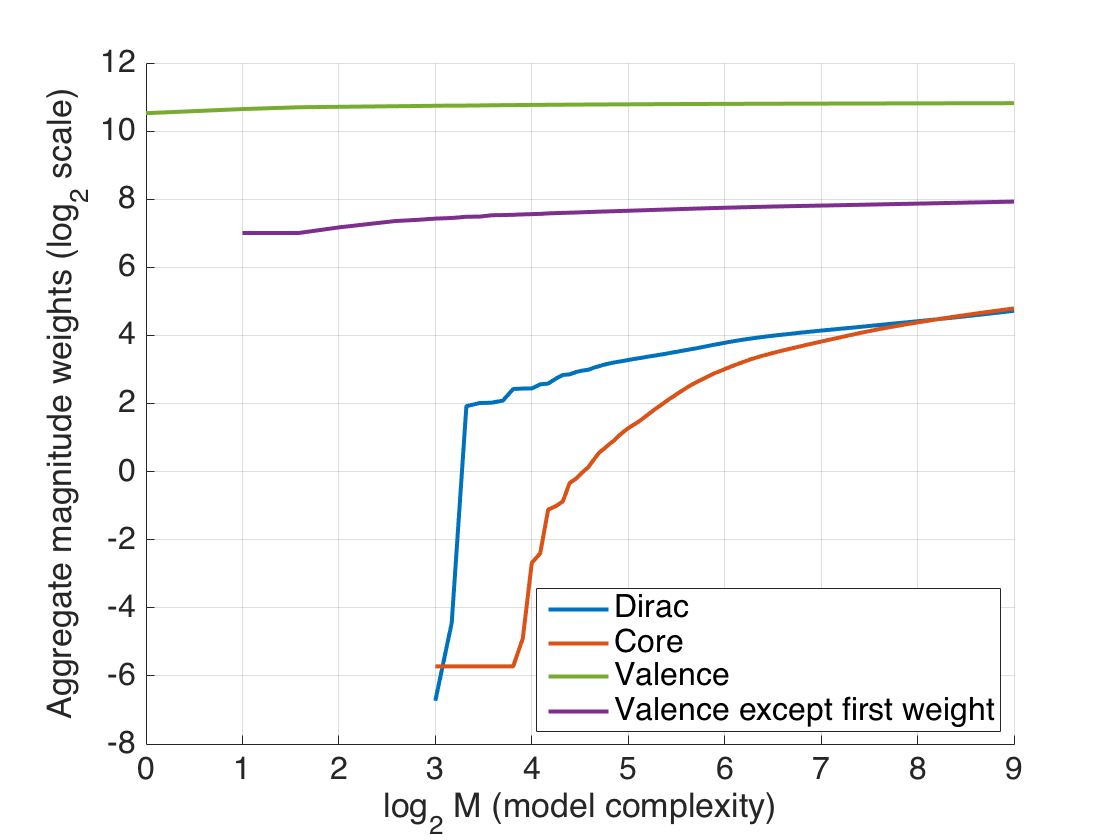}
\label{fig: dirac, core, valence percentages}
}}
\caption{Comparison of the average scattering weights $\mathbb{E}_l \left(
    \frac{1}{\sqrt{n}} |\widetilde{w}_k^l| \right)$ for three different
  categories of the coefficients: the order of the coefficient (zero,
  one, or two), the norm of the coefficient ($\Lu$ or $\Ld$), and the
  type of density (Dirac, core, valence).}
\label{fig: aggregate scat magnitudes}
\end{figure}

Figure \ref{fig: L1 L2 percentages} plots the magnitudes of $\Lu$ and
$\Ld$ aggregated scattering weights. Recall that $\Lu$ coefficients
scale with the number of bonds, while $\Ld$ coefficients incorporate
quadratic behavior resulting from Coulomb interactions. Including the
first weight for the coefficient $\| \valrho \|_1$ shows that $\Lu$
energies dominate $\Ld$ energies. However, when this weight is
removed, Figure \ref{fig:  L1 L2 percentages} indicates that the two
types of coefficients are more balanced, with $\Lu$ weights accounting for
approximately twice the energy as $\Ld$ weights. 

The precision of the regression also depends
upon the electronic density models $\rho[z_k]$ used to 
compute the non-interacting density
$\rho[x] = \sum_k \rho[z_k] (u-r_k)$. 
\Cref{fig: MAE} shows that the error obtained with 
a point charge Dirac density is more than
three times as big as the error obtained with the electronic density of isolated
atoms, computed by DFT. The density model which separates valence and core
electronic densities further reduces the error by more than
30\%. This model multiplies by two the number of invariants 
of each dictionary since invariants
are computed separately for the core and valence densities. 
The primary factor in the improvement is the separated valence
density. Indeed, Figure \ref{fig: dirac, core, valence percentages}
shows that the energy of valence coefficients dominates the energy of
core and Dirac density weights, even if the weight corresponding to
$\| \valrho \|_1$ is removed. This phenomena is
chemically intuitive, since differences between molecules result
primarily from the interactions of valence electrons of constituent
atoms. From \cref{fig: MAE} we also observe that the addition of
the Dirac density, theoretically used to learn
the nuclei Coulomb interactions (\cref{sec:classical physics}),
to either the atomic density or core/valence densities yields only a
small improvement in the numerical precision. It can thus can be
omitted in computations.

\begin{figure}
\centerline{\includegraphics[width=4.0in]{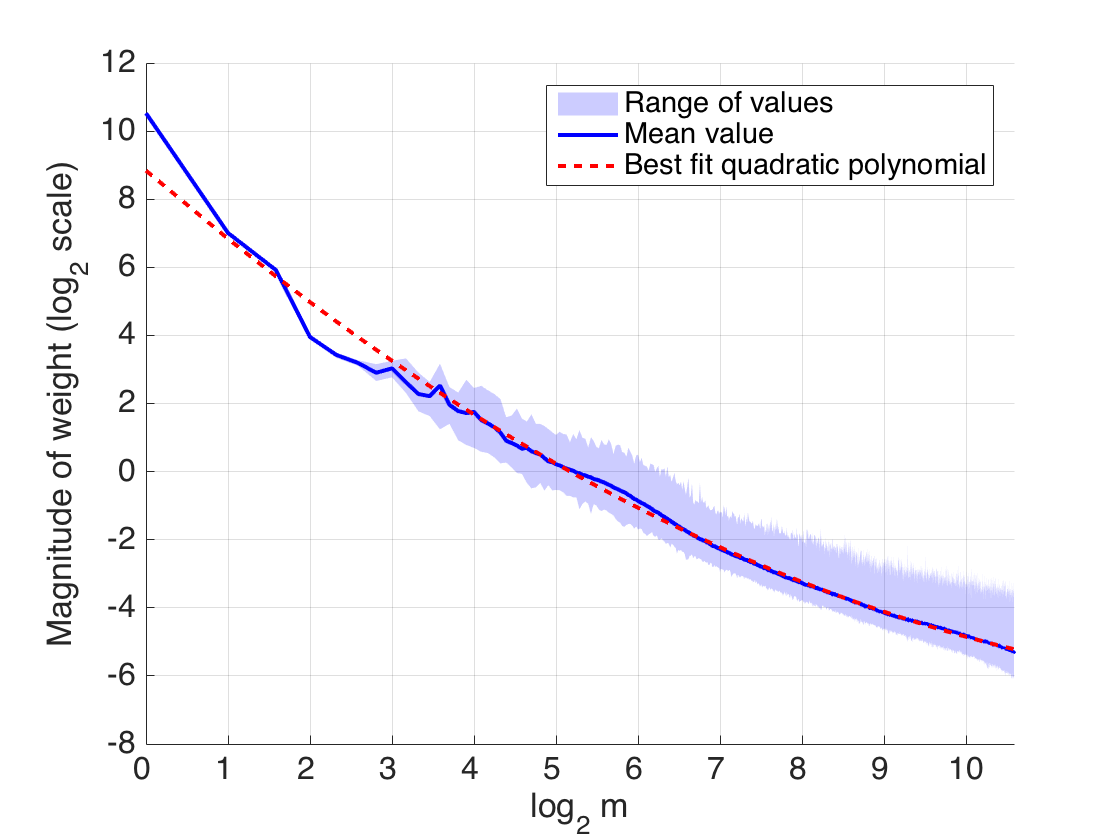}}
\caption{Decay of the scattering weights' magnitude
  $\frac{1}{\sqrt{n}} |\widetilde{w}_{k_m}^l|$ as a function of
  $m$. The solid line is $\mathbb{E}_l \left( \frac{1}{\sqrt{n}}
|\widetilde{w}_{k_m}^l| \right)$, while the upper bound of the shaded
region is $\max_l \left( \frac{1}{\sqrt{n}}
|\widetilde{w}_{k_m}^l| \right)$ and lower bound is $\min_l \left( \frac{1}{\sqrt{n}}
|\widetilde{w}_{k_m}^l| \right)$. The red dashed curve is the
quadratic polynomial that best fits $\mathbb{E}_l \left( \frac{1}{\sqrt{n}}
|\widetilde{w}_{k_m}^l| \right)$ on a $\log_2$ -- $\log_2$ scale.}
\label{fig: scat coeff decay}
\end{figure} 

Finally, we study the decay of the weights' magnitude $|
\widetilde{w}_{k_m}^l |$ as a function of $m$, which by \eqref{eqn:
  ortho error estimate} determines the rate of convergence of
$\tilde{f}$ to $f$ on the training set. We extend the numerical
experiment that generated \cref{fig: scat histo,fig: aggregate scat
  magnitudes} by computing the weights
$\widetilde{w}_{k_m}^l$ up to $m = 3 \cdot 2^9$ (the same maximum
model dimension $M$ in \cref{fig: M-term RMSE curves}). In
\cref{fig: scat coeff decay} we plot $\mathbb{E}_l \left( \frac{1}{\sqrt{n}}
|\widetilde{w}_{k_m}^l| \right)$ as well as $\max_l \left( \frac{1}{\sqrt{n}}
|\widetilde{w}_{k_m}^l| \right)$ and $\min_l \left( \frac{1}{\sqrt{n}}
|\widetilde{w}_{k_m}^l| \right)$ as a function of $m$. From the figure
we observe that on the $\log_2$ -- $\log_2$ scale, the weights decay
similarly to the decreasing part of a convex quadratic polynomial. This implies:
\begin{equation} \label{eqn: variable power law}
\mathbb{E}_l \left( \frac{1}{\sqrt{n}} |\widetilde{w}_{k_m}^l| \right)
\approx 2^c m^{a\log_2 m + b}, \quad a > 0, \enspace b < 0, \quad 1
\leq m \leq 3 \cdot 2^9.
\end{equation}
Equation \eqref{eqn: variable power law}
shows that $\mathbb{E}_l \left( \frac{1}{\sqrt{n}}
  |\widetilde{w}_{k_m}^l| \right)$ satisfies a variable power law
$C|m|^{-\alpha (m)}$, where $\alpha (m)$ decreases linearly as a
function of $\log_2 m$.  For the polynomial plotted in \cref{fig: scat
  coeff decay}, $a \approx 0.07$ and $b \approx -2.1$ and
the polynomial is given by $P(x) = ax^2 + bx + c$. Thus the power
$\alpha (m)$ decreases from approximately $2.1$ at $m = 1$, to
$1.4$ at $m = 2^9$ (the approximate best value of $M$ from \cref{fig:
  MAE,fig: M-term RMSE curves}), to $1.3$ at
$m = 3 \cdot 2^9$. While the initial power law gives a fast rate of convergence, it slows as $m$ gets larger thereby giving
diminishing returns for $\tilde{f}$ as the orthogonal least square algorithm
progresses. \Cref{fig: M-term RMSE curves} indicates that the
diminishing improvement in the bias error of the model is overtaken by the
increasing variance error at approximately $m = 2^9$. 

\section{Regression of Coulomb Energy
  Functionals} \label{sec:classical physics} 

We study the regression of Coulomb potential energies with
Fourier and wavelet invariant dictionaries. Let us first relate
Coulomb energy terms to the quantum energy of a molecule.

The Kohn-Sham approach to density functional theory decomposes the energy
as a sum of four terms in \eqref{eqn: exact energy dft}. 
The ground state energy $f(x)$ is obtained by adding 
the nuclei-nuclei Coulomb potential energy in \eqref{groundstaten},
which can be rewritten: 
\begin{equation*}
f(x) = T (\rho_0) + \frac 1 2 \,U (\nrho - \rho_0) + E_{\mathrm{xc}} (\rho_0),
\end{equation*}
where $\rho_0$ is the ground state electronic density and
$\nrho = \sum_k z_k \, \delta(u-r_k)$ is the point charges
of the nuclei. The first term
$T(\rho_0)$ is the Kinetic energy and $E_{\mathrm{xc}} (\rho_0)$ is the
exchange correlation energy. 
The Coulomb potential energy of
the electronic density and the nuclei charges are regrouped in
\begin{equation}
\label{Coulomb}
U(\rho) = {\rm p.v.} \int_{\R^3} \int_{\R^3} \frac{\rho(u)\, \rho(v)}{|u-v|} \, du \, dv,
\end{equation}
evaluated for the overall charge density $\rho = \nrho - \rho_0$.
Whereas the ground state electronic density $\rho_0$ is Lipschitz with
fast decay, the nuclei density $\nrho$ is a sum of Diracs.
The Coulomb integral is defined as the Cauchy principal value integral
to handle the singularity of the Coulomb kernel $|u|^{-1}$ for point
charges. To simplify notations, we omit the p.v. notation, but all 
Coulomb energy integrals are defined in that sense.

The exchange correlation energy is a complicated term which makes the
mathematical analysis of solutions particularly difficult.
In the following, we concentrate on the Coulomb energy potential
$U(\rho)$. \Cref{CoulombFourier} proves that the Coulomb energy potential
is linearly regressed with an error $O(\epsilon^{1-\delta})$ 
with $O(\epsilon^{-2})$ Fourier invariants calculated from
$\rho$, for arbitrary $\delta > 0$.
\Cref{sec: wavelet coulomb analysis} proves that a wavelet
invariant dictionary produces a regression error $O(\epsilon)$ with 
only $O(|\log \epsilon|)$ terms.

\subsection{Fourier Invariant Regression}\label{CoulombFourier}

The Coulomb potential energy $U(\rho)$ is invariant to isometries. We prove
that it can be linearly regressed over the quadratic 
Fourier invariants introduced in \cref{Fourier}:
\begin{equation*}
\| \hrho_{\alpha} \|_2^2 =
\int_{S^2} | \hrho (\alpha \eta) |^2 \, d\eta.
\end{equation*}

The Coulomb energy \eqref{Coulomb} can be written as a convolution
\begin{equation*}
U(\rho) = {\rm p.v.} \int_{\R^3} \bar \rho \ast \rho (u) \, |u|^{-1}\, du,
\end{equation*}
with $\bar \rho(u) = \rho(-u)$.
Since the Fourier transform of $|u|^{-1}$ is
$4 \pi |\omega|^{-2}$, the Parseval formula gives
\begin{equation}
\label{nsdfoids}
U(\rho) = \frac {4 \pi} {(2 \pi)^3} \int_{\R^3} \frac{|\hat \rho (\om)|^2}{|\om|^2}\, d\om.
\end{equation}
Let $\delta > 0$. The following theorem gives a regression of $U(\rho)$
using $\epsilon^{-2}$ quadratic Fourier invariants 
$\| \hrho_{\alpha} \|_2^2$ sampled on an $\epsilon$--grid of $[\epsilon,
\epsilon^{-1}]$. It proves that the regression 
error is $O(\epsilon^{1-\delta})$. 
Like $\rho_n - \rho_0$, $\rho$ is 
a sum of a Lipschitz function with fast decay and a finite number of point charges.

\begin{theorem} \label{prop: fourier regression}
If $\rho$ is a sum of a Lipschitz function in $\mathbf{L}^1(\R^3)$
with exponential decay and a finite
number of point charges, then for any $0 < \epsilon < 1$ and $0 <
\delta < 1$,
\begin{equation} \label{FourierRegre}
U(\rho) = \frac{\epsilon}{4\pi^2} \left( \| \hat{\rho}_{\epsilon} \|_2^2 +
2 \sum_{k=2}^{\epsilon^{-2}-1} \| \hrho_{k\epsilon} \|_2^2 + \|
\hat{\rho}_{\epsilon^{-1}} \|_2^2 \right) + O(\epsilon^{1-\delta}) \|
\rho \|_1^2, \quad \text{as } \epsilon \rightarrow 0. 
\end{equation}
\end{theorem}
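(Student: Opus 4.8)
The plan is to turn \eqref{FourierRegre} into a one-dimensional quadrature estimate. Starting from the Parseval identity \eqref{nsdfoids} and passing to spherical coordinates $\om = \alpha\eta$ with $\alpha = |\om|$, $\eta \in S^2$ and $d\om = \alpha^2\,d\alpha\,d\eta$, the weight $|\om|^{-2} = \alpha^{-2}$ cancels the Jacobian $\alpha^2$ and (using $4\pi/(2\pi)^3 = 1/(2\pi^2)$) leaves
\begin{equation*}
U(\rho) = \frac{1}{2\pi^2}\int_0^\infty \|\hrho_\alpha\|_2^2 \, d\alpha .
\end{equation*}
The decisive observation is that the right-hand side of \eqref{FourierRegre} is exactly the composite trapezoidal rule for $\frac{1}{2\pi^2}\int_\epsilon^{\epsilon^{-1}} \|\hrho_\alpha\|_2^2\,d\alpha$ on the uniform grid $\alpha_k = k\epsilon$, $k = 1,\dots,\epsilon^{-2}$, with step $\epsilon$ and the half-weighted endpoints at $\alpha = \epsilon$ and $\alpha = \epsilon^{-1}$. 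The error therefore decomposes into three pieces: the low-frequency tail $\int_0^\epsilon$, the high-frequency tail $\int_{\epsilon^{-1}}^\infty$, and the quadrature error on $[\epsilon,\epsilon^{-1}]$.

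Set $g(\alpha) = \frac{1}{2\pi^2}\int_{S^2} |\hrho(\alpha\eta)|^2\,d\eta$. For the low-frequency tail I would use only $|\hrho(\om)| \le \|\rho\|_1$, valid because $\rho \in \Lu$, which gives $g(\alpha) \le \frac{2}{\pi}\|\rho\|_1^2$ and hence $\int_0^\epsilon g = O(\epsilon)\|\rho\|_1^2$. For the quadrature error I would invoke the Euler--Maclaurin estimate, whose leading term is $-\frac{\epsilon^2}{12}\big(g'(\epsilon^{-1}) - g'(\epsilon)\big)$; differentiating under the $S^2$ integral writes $g'$ in terms of $\nabla\hrho = \widehat{-iu\rho}$, which is bounded by the first moment $\int |u|\,|\rho|\,du$, finite since the Lipschitz part decays exponentially and there are finitely many point charges. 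This contributes $O(\epsilon^2)$, well inside the claimed bound.

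The main obstacle is the high-frequency tail $\int_{\epsilon^{-1}}^\infty g\,d\alpha$, and it is here that the hypothesis on $\rho$ does its work. Splitting $\rho = \rho_\ell + \rho_p$ into the Lipschitz part $\rho_\ell$ and the point charges $\rho_p = \sum_k z_k\,\delta(\,\cdot - r_k)$, the Lipschitz regularity together with exponential decay forces $\widehat{\rho_\ell}(\om) = O(|\om|^{-1})$, so that part of $g$ decays like $\alpha^{-2}$ with tail $O(\epsilon)$. For $\rho_p$ I would use the exact spherical average $\int_{S^2} e^{-i(r_k - r_l)\cdot \alpha\eta}\,d\eta = 4\pi\,\frac{\sin(\alpha|r_k - r_l|)}{\alpha|r_k - r_l|}$, so the cross terms $k \neq l$ decay like $(\alpha|r_k-r_l|)^{-1}$ and their tails are controlled through $\int_X^\infty t^{-1}\sin t\,dt = O(X^{-1})$, again $O(\epsilon)$; the diagonal $k = l$ produces only a geometry-independent constant, which must be absorbed by the principal-value renormalization already present in \eqref{Coulomb}.

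The genuinely delicate step is to package all of these tail estimates into the single form $O(\epsilon^{1-\delta})\|\rho\|_1^2$. Because the available decay is only borderline --- exactly $|\om|^{-1}$ for the smooth part and merely conditional for the oscillatory cross terms --- I expect to interpolate between the trivial bound $|\hrho| \le \|\rho\|_1$, which supplies the clean $\|\rho\|_1^2$ dependence but no decay, and the $O(|\om|^{-1})$ decay estimate, splitting the frequency axis and optimizing the split. This interpolation is what trades a clean $O(\epsilon)$ for $O(\epsilon^{1-\delta})$ with an arbitrarily small loss $\delta > 0$, and arranging the amplitude factor to come out as $\|\rho\|_1^2$ while keeping the power at $1 - \delta$ is the crux of the argument.
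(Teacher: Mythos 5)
Your proof has the same architecture as the paper's: the Parseval identity \eqref{nsdfoids} passed to spherical coordinates, a low-frequency cut using only $\|\hrho\|_\infty \le \|\rho\|_1$, the composite trapezoid rule on $[\epsilon,\epsilon^{-1}]$ (the paper, in \cref{lem: fourier riemann sum}, just bounds the $g''$ error term, using that exponential decay makes $\hrho$ smooth with bounded derivatives; that already gives the required $O(\epsilon)$, so your finer Euler--Maclaurin claim is unnecessary), and a high-frequency cut split over the Lipschitz part and the point charges, with the Lipschitz part treated identically via $|\widehat{\rho_\ell}(\om)| = O(|\om|^{-1})$. Where you genuinely depart from the paper is the pure point-charge tail. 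The paper mollifies the Coulomb kernel in space and proves $|V(r_k-r_l) - V\ast\phi_\epsilon(r_k-r_l)| \le C\epsilon^{\beta}$ for every $\beta<1$ (\cref{lem: V approx by phi}); that lemma is the sole source of the $O(\epsilon^{1-\delta})$ loss in the statement. Your route --- the exact average $\int_{S^2} e^{-i(r_k-r_l)\cdot\alpha\eta}\,d\eta = 4\pi \sin(\alpha|r_k-r_l|)/(\alpha|r_k-r_l|)$ followed by $\int_X^\infty t^{-1}\sin t\,dt = O(X^{-1})$ --- is more elementary and sharper: it gives $O(\epsilon)$ for this term, with a constant depending on the minimal inter-charge separation, which is exactly the dependence the paper's lemma has as well. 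Your handling of the diagonal agrees with the paper's implicit p.v. convention (its sums run over $k \neq l$). Consequently the ``interpolation'' you anticipate in your final paragraph is a phantom difficulty: nothing in your scheme produces an $\epsilon^{-\delta}$ loss.

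The genuine gap is the cross term. You split $\rho = \rho_\ell + \rho_p$ and estimate the tails of $|\widehat{\rho_\ell}|^2$ and $|\widehat{\rho_p}|^2$, but $|\hrho|^2 = |\widehat{\rho_\ell}|^2 + |\widehat{\rho_p}|^2 + 2\,\mathrm{Re}\,(\widehat{\rho_\ell}\,\widehat{\rho_p}^{\,*})$, and the mixed term never appears in your proposal. It is not controlled by the bounds you have in hand: $|\widehat{\rho_\ell}(\om)| \lesssim (1+|\om|)^{-1}$ and $|\widehat{\rho_p}| \le \|\rho_p\|_1$ give only $\int_{\epsilon^{-1}}^{\infty} O(\alpha^{-1})\, d\alpha$ for its contribution to the radial tail, which diverges logarithmically, so genuine cancellation must be exploited. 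The paper dispatches this term by H\"older's inequality against the two pure tails (a step which itself leans on the p.v. convention to make sense of the point-charge factor); within your framework the natural repair is to apply your own spherical-average trick to $\widehat{\rho_\ell}(\alpha\eta)\, e^{i\alpha r_k\cdot\eta} = \widehat{h_k}(\alpha\eta)$ with $h_k = \rho_\ell(\cdot + r_k)$: writing $\int_{S^2} \widehat{h_k}(\alpha\eta)\, d\eta = \frac{4\pi}{\alpha}\int_0^\infty H_k(s)\sin(\alpha s)\, ds$ with $H_k(s) = s\int_{S^2} h_k(s\sigma)\,d\sigma$, the Lipschitz continuity and exponential decay of $\rho_\ell$ yield, via a half-period shift argument, $\int_0^\infty H_k(s)\sin(\alpha s)\,ds = O(\alpha^{-1}\log^2\alpha)$, hence a tail of order $O(\epsilon \log^2(1/\epsilon)) = O(\epsilon^{1-\delta})$. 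Until the cross term is treated by such an argument (or by the paper's), the proof is incomplete.
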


\begin{proof}
Since $\| \hrho_{\alpha} \|_2^2 =
\int_{S^2} | \hrho (\alpha \eta) |^2 \, d\eta$ 
it results from \eqref{nsdfoids} that
\begin{align*}
U(\rho) &= \frac{1}{2 \pi^2} \int_0^{\infty} \frac{1}{\alpha^2}
\int_{S_{\alpha}^2} | \hat{\rho} (\overline{\eta}) |^2 \,
d\overline{\eta} \, d\alpha, \\
&= \frac{1}{2\pi^2} \int_0^{\infty}
\frac{1}{\alpha^2} \cdot \alpha^2 \int_{S^2} |\hat{\rho}(\alpha \eta)|^2 \,
d\eta \, d\alpha = \frac 1 {2 \pi^2}\, \int_{0}^{\infty} \|
  \hrho_{\alpha} \|_2^2 \, d\alpha, 
\end{align*}
where $S_{\alpha}^2 \subset \R^3$ is the sphere of radius
$\alpha$. The following lemma gives a low and high frequency cut  of
this integral to obtain an error $O(\epsilon^{1-\delta})$.

\begin{lemma}\label{lem: coulomb frequency cut}
Under the hypotheses of \cref{prop: fourier regression},
\begin{equation*}
\int_{|\om| < \epsilon} \frac{|\hrho (\omega)|^2}{|\omega|^2} \,
d\omega = O(\epsilon) \| \rho \|_1^2,
\end{equation*}
and for any $0 < \delta < 1$,
\begin{equation*}
\int_{|\om| > \epsilon^{-1}} \frac{|\hrho (\omega)|^2}{|\omega|^2} \,
d\omega = O(\epsilon^{1-\delta}) \| \rho \|_1^2. 
\end{equation*}
\end{lemma}

It results from this lemma that
\begin{equation}
\label{eqn: fourier proof sketch}
U(\rho) = \frac 1 {2 \pi^2}\, 
\int_{\epsilon}^{\epsilon^{-1}} \| \hrho_{\alpha} \|_2^2 \,
  d\alpha + O(\epsilon^{1-\delta}) \| \rho \|_1^2. 
\end{equation}
This second lemma provides a Riemann sum approximation of this
integral using the trapezoid rule.

\begin{lemma}\label{lem: fourier riemann sum}
Under the hypotheses of \cref{prop: fourier regression}, 
\begin{equation}
\label{eqn: fourier proof sketch3}
\int_{\epsilon}^{\epsilon^{-1}} \| \hrho_{\alpha} \|_2^2 \,
  d\alpha =  \frac{\epsilon}{2} \left( \| \hat{\rho}_{\epsilon} \|_2^2 +
2 \sum_{k=2}^{\epsilon^{-2}-1} \| \hrho_{k\epsilon} \|_2^2 + \|
\hat{\rho}_{\epsilon^{-1}} \|_2^2 \right) + O(\epsilon).
\end{equation}
\end{lemma}

Inserting \eqref{eqn: fourier proof sketch3} in \eqref{eqn: fourier
  proof sketch} proves \eqref{FourierRegre}. The proofs of \cref{lem:
  coulomb frequency cut,lem: fourier riemann sum} are given in \cref{sec: proof of coulomb frequency cut,sec: proof of fourier riemann sum}, respectively. 
\end{proof}

Since $\delta$ can be made arbitrarily small, \cref{prop:
  fourier regression} proves, for all practical purposes, that Fourier
regressions can approximate $U(\rho)$ to accuracy $O(\epsilon)$
utilizing $O(\epsilon^{-2})$ invariants. \Cref{lem: fourier
  riemann sum} can be improved by utilizing higher order Newton-Cotes numerical
integration schemes \cite{hoffman:numericalMethods2001}, in which case
one can show that for an arbitrary $\delta' > 0$ independent of $\delta$, there exists
$O(\epsilon^{-(1+\delta')})$ quadratic Fourier invariants that regress
$U(\rho)$ to accuracy $O(\epsilon^{1-\delta})$; some additional
remarks along these lines are given in \cref{sec: proof of
  fourier riemann sum}. Furthermore, the assumptions on $\rho$ are
physically valid. Indeed, it is reasonable to assume $\rho_0$ is Lipschitz, see for example
\cite{fournais:regularityElectronDensity2007}, and it is
known \cite{morrell:electronDensityLongRange1975} that the electronic
density $\rho_0(u)$ decays exponentially fast as $|u| \rightarrow
\infty$. 

\subsection{Wavelet Invariant Regression} \label{sec: wavelet coulomb analysis}

A Coulomb potential energy is computed with the singular 
convolution kernel $|u|^{-1}$.
The Fourier spectrum $4 \pi |\omega|^{-2}$ is singular at $\omega = 0$ and has
a slow decay. Such homogeneous singular operators are better represented
with wavelet transforms
\cite{Daubechies:1992:TLW:130655, meyer:waveletsOperators1993,
  meyer:waveletsCZMO2000, Mallat:2008:WTS:1525499}. The following theorem proves 
that Coulomb potentials have a much more sparse regression
over quadratic wavelet invariants, 
\begin{equation*}
\| \rho \ast
\psi_{j,\cdot} \|_2^2  = \int_{\R^3 \times {[0,\pi]^2}} |\rho \ast \psi_{j,\theta} (u)|^2\,
du \, d\theta,
\end{equation*}
than over Fourier invariants.

\begin{theorem} \label{thm: bandlimited laurent wavelet coulomb regression}
Suppose $\psi$ has compactly supported Fourier transform and satisfies
for $\omega \neq 0$,
\begin{equation}\label{eqn: littlewood-paley exact}
\sum_{j \in \Z} 2^{2j} \int_{{[0,\pi]^2}} | \hpsi (2^j r_{\theta}^{-1} \omega) |^2 \, d\theta = |\omega|^{-2}.
\end{equation}
If $\rho$ is the sum of a Lipschitz function in $\Lu(\R^3)$
and a finite number of point charges, then:
\begin{equation}
\label{satndsfs}
U(\rho) = 4 \pi \sum_{j=2\log_2 \epsilon}^{-\log_2 \epsilon}  2^{2j}\, \| \rho \ast
\psi_{j,\cdot} \|_2^2 + O(\epsilon) \| \rho \|_1^2, \quad \text{as } \epsilon
\rightarrow 0.
\end{equation}
\end{theorem}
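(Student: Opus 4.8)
The plan is to reduce the claimed identity to a pair of frequency-truncation estimates, exploiting the fact that the wavelet sum reproduces the Coulomb multiplier $|\om|^{-2}$ \emph{exactly} rather than through a numerical quadrature. Applying the Plancherel identity of \cref{sec: wavelet transform} to each term gives
\[
\|\rho \ast \psi_{j,\cdot}\|_2^2 = \frac{1}{(2\pi)^3}\int_{\R^3}|\hrho(\om)|^2\int_{[0,\pi]^2}|\hpsi(2^j r_\theta^{-1}\om)|^2\,d\theta\,d\om ,
\]
so that, every summand being nonnegative, Tonelli's theorem permits interchanging the sum over $j$ and the integral:
\[
4\pi\sum_{j\in\Z}2^{2j}\|\rho\ast\psi_{j,\cdot}\|_2^2 = \frac{4\pi}{(2\pi)^3}\int_{\R^3}|\hrho(\om)|^2\Big(\sum_{j\in\Z}2^{2j}\int_{[0,\pi]^2}|\hpsi(2^j r_\theta^{-1}\om)|^2\,d\theta\Big)\,d\om .
\]
By the Littlewood--Paley hypothesis \eqref{eqn: littlewood-paley exact} the bracketed multiplier equals $|\om|^{-2}$, and comparison with \eqref{nsdfoids} shows that the \emph{full} sum over $j\in\Z$ equals $U(\rho)$ with no error. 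Thus, unlike the Fourier regression, there is no discretization error to control, and the only discrepancy in \eqref{satndsfs} comes from restricting $j$ to the band $2\log_2\epsilon\le j\le-\log_2\epsilon$.

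Next I would quantify the truncation. Writing $S_\epsilon(\om)$ for the multiplier carried by the omitted scales $j<2\log_2\epsilon$ and $j>-\log_2\epsilon$, the error equals $\frac{4\pi}{(2\pi)^3}\int|\hrho(\om)|^2 S_\epsilon(\om)\,d\om$, where $0\le S_\epsilon(\om)\le|\om|^{-2}$ by \eqref{eqn: littlewood-paley exact}. The compact support of $\hpsi$ — an annulus $c_1\le|\xi|\le c_2$, as is forced if the exact identity \eqref{eqn: littlewood-paley exact} is to produce a homogeneous degree $-2$ multiplier — confines the scale $2^j$ to the frequency shell $|\om|\sim 2^{-j}$, since $\hpsi(2^j r_\theta^{-1}\om)$ vanishes unless $c_1\le 2^j|\om|\le c_2$. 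Consequently the omitted large scales $j>-\log_2\epsilon$ contribute only for $|\om|\lesssim\epsilon$, while the omitted small scales $j<2\log_2\epsilon$ contribute only for $|\om|\gtrsim\epsilon^{-2}$. The retained band therefore covers $\epsilon\lesssim|\om|\lesssim\epsilon^{-2}$, and bounding $S_\epsilon$ by $|\om|^{-2}$ splits the error into a low- and a high-frequency Coulomb integral.

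The low-frequency piece is elementary: using only $|\hrho(\om)|\le\|\rho\|_1$ together with the three-dimensional volume estimate $\int_{|\om|<R}|\om|^{-2}\,d\om = 4\pi R$, I obtain $\int_{|\om|\lesssim\epsilon}|\hrho|^2|\om|^{-2}\,d\om = O(\epsilon)\|\rho\|_1^2$, which is exactly the announced order. The high-frequency piece is the delicate one, because the point-charge part of $\rho$ makes $\hrho$ non-decaying, so the crude bound $|\hrho|\le\|\rho\|_1$ fails there (indeed $\int_{|\om|>R}|\om|^{-2}\,d\om$ diverges in $\R^3$). Here I would invoke \cref{lem: coulomb frequency cut} with $\epsilon$ replaced by $\epsilon^2$: separating the Lipschitz $\Lu$ part of $\rho$ (whose transform decays like $|\om|^{-1}$, contributing $O(\epsilon^2)$) from the finitely many nuclear point charges (handled through the principal-value regularization), this tail is $O(\epsilon^{2-2\delta})\|\rho\|_1^2$ for any $0<\delta<1$, hence $o(\epsilon)\|\rho\|_1^2$ and absorbed into the low-frequency term. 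Summing the two tails yields the $O(\epsilon)\|\rho\|_1^2$ error of \eqref{satndsfs}.

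The main obstacle is precisely this high-frequency control in the presence of point charges, which is the content I would outsource to \cref{lem: coulomb frequency cut}. The decisive structural gain over the Fourier analysis is twofold: the exactness of \eqref{eqn: littlewood-paley exact} removes any quadrature error (so the whole discrepancy is a band truncation), and the high-frequency cutoff now sits at $\epsilon^{-2}$ rather than $\epsilon^{-1}$, pushing that tail down to $o(\epsilon)$ so that the harmless low-frequency tail dominates. Together these explain why the $O(\epsilon^{-2})$ Fourier invariants collapse to the $O(|\log\epsilon|)$ wavelet scales of \eqref{satndsfs}.
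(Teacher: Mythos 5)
Your proof takes essentially the same route as the paper's: Plancherel plus the exact Littlewood--Paley identity \eqref{eqn: littlewood-paley exact} identify the full sum over $j \in \Z$ with $U(\rho)$ exactly (equation \eqref{Ustatem} of the paper), and the truncation error is then localized to the frequency tails $|\omega| \lesssim \epsilon$ and $|\omega| \gtrsim \epsilon^{-2}$ and controlled by \cref{lem: coulomb frequency cut}; this localization-plus-tail-bound step is precisely the content of the paper's \cref{lem: coulomb frequency cut2}, which you have simply inlined.

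The one inaccuracy is your parenthetical claim that annulus support of $\hpsi$ is \emph{forced} by the exact identity \eqref{eqn: littlewood-paley exact}. It is not: writing $h(t) = 2^{2t} g(2^t)$ for the radial profile $g(s)$ of the angular-integrated spectrum, the identity is equivalent to the partition of unity $\sum_j h(t+j) = 1$, which admits solutions with $h$ supported on a half-line (e.g.\ a smoothed version of $h = 7\sum_{k \leq -1} 8^k \chi_{[k,k+1)}$), i.e.\ with $g(s) \sim s$ positive on a full punctured neighborhood of the origin, compatible with compact support and $\hpsi(0)=0$. For such a $\psi$ the omitted small scales $j < 2\log_2 \epsilon$ contribute at \emph{all} frequencies, and the scale-to-shell localization your argument (and the paper's) relies on breaks down. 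The paper resolves this by imposing the annulus support as an explicit additional hypothesis, equation \eqref{eqn: compact support assumption}, in the appendix restatement of \cref{lem: coulomb frequency cut2}; you should do the same, stating it as an assumption rather than deriving it. With that correction your argument is complete and coincides with the paper's.
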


\begin{proof}
Since
\begin{equation*}
U(\rho) = \frac{4 \pi}{(2\pi)^3}
 \int_{\R^3} \frac{|\hrho (\omega)|^2}{|\omega|^2} \, d\omega,
\end{equation*}
inserting \eqref{eqn: littlewood-paley exact} gives
\begin{equation*}
U(\rho) = \frac{4 \pi}{(2\pi)^3}
\sum_{j \in \Z} 2^{2j} \int_{{[0,\pi]^2}} 
\int_{\R^3} 
\, {| \hpsi_{j,\theta} (\omega) |^2 }
{|\hrho (\omega)|^2}\,
d\omega \, d\theta .
\end{equation*}
Applying the Plancherel formula proves that
\begin{equation}
\label{Ustatem}
U(\rho) = 4 \pi \sum_{j \in \Z} 2^{2j} 
\| \rho \ast \psi_{j,.} \|_2^2 \, .
\end{equation}
The following lemma proves that the infinite sum over $j$ can
be truncated with an $O(\epsilon)$ error; it's proof is in \cref{sec:
  proof of wavelet cut}.

\begin{lemma}\label{lem: coulomb frequency cut2}
Under the hypotheses of \cref{thm: bandlimited laurent wavelet coulomb
  regression},
\begin{equation}
\label{cutint3}
\sum_{j=-\infty}^{2\log_2 \epsilon}
2^{2j} 
\| \rho \ast \psi_{j,.} \|_2^2 = O(\epsilon) \| \rho \|_1^2,
\end{equation}
and
\begin{equation}
\label{cutint4}
\sum_{j=-\log_2 \epsilon}^{+\infty}
2^{2j} 
\| \rho \ast \psi_{j,.} \|_2^2 = O(\epsilon) \| \rho \|_1^2.
\end{equation}
\end{lemma}

Inserting \eqref{cutint3} and \eqref{cutint4} in \eqref{Ustatem} proves
\eqref{satndsfs}. 
\end{proof}

This theorem proves that Coulomb potential energy can be regressed
to $O(\epsilon)$ error with $3|\log_2 \epsilon| = O (| \log \epsilon|)$ wavelet
invariants, as opposed to $O(\epsilon^{-2})$ Fourier
invariants. Similarly to multipole approximations  of Coulomb potentials
\cite{greengard:multipole1987,greengard:multipoleThesis1988}, the wavelet transform
regroups Coulomb interactions at distances of the order of $2^j$,
which gives an efficient regression formula. 

The wavelet condition \eqref{eqn: littlewood-paley exact} is satisfied if
$\hpsi (\om) = |\omega|^{-1} \, \hpsi^0 (\omega)$ where
$\hpsi^0 (\om)$ satisfies the exact
Littlewood-Paley condition for all $\omega \neq 0$:
\begin{equation*}
\sum_{j \in \Z} \int_{{[0,\pi]^2}} | \hpsi^0 (2^j r_{\theta}^{-1} \omega) |^2 \, d\theta = 1~.
\end{equation*}

In the last two sections, we computed $U(\rho)$ by supposing that
$\rho$ is known. Given the molecular state 
$x = \{z_k,r_k \}_k$,  the pointwise nuclear
charge $\rho_n (u) = \sum_k z_k \delta(u-r_k)$ is known, but the ground
state electronic density $\rho_0 (u)$ is unknown. 
The non-interacting electronic densities 
$\rho[x]$ are crude approximations of $\rho_0$. One may however approximate
$\rho_0$ through deformations of $\rho[x]$. 
Since wavelet transforms
are Lipschitz continuous to deformations, adjusting the linear regression
coefficients in \eqref{satndsfs} can partly take into account the 
electronic density errors. Such adjustments are more unstable with 
Fourier invariant regressions, because of the Fourier instabilities to
deformations at high frequencies.

\section{Conclusion}

We introduced a multiscale dictionary of invariants to compute sparse
regressions of quantum molecular energies. We proved that Coulomb 
potential energies are regressed with few wavelet transform invariants,
but these invariants do not have enough flexibility 
to accurately regress quantum molecular energies. Wavelet 
invariants are complemented by multiscale scattering invariants providing
higher order interaction terms, which improve energy regressions.

Numerical regression errors over two databases of planar molecules are of the
order of DFT errors, but further numerical experiments over
larger databases of three dimensional molecules are needed to evaluate
the range of validity of these quantum energy
regressions. \textcolor{black}{A priori there is no mathematical
  difficulty in working with three dimensional molecules, and indeed
  all of the mathematical analysis is carried out over
  $\R^3$. Numerically, ongoing work includes the development of
  computational and memory efficient three dimensional wavelet
  filters, done in part by leveraging the symmetries described in
  \cref{sec: scat 2nd layer symmetry}.} Already
though these numerical results over planar molecules are opening
mathematical questions to relate more precisely second order
scattering coefficients to the properties of kinetic energy terms and
exchange correlation energy terms of density functional theory.

Let us finally emphasize that scattering energy regressions rely on general
invariance and stability properties, which are common to large classes of
interacting many body problems. 
Multiscale scattering regressions may thus also
apply to other many body problems, which 
exhibit complex multiscale behavior, such as in astronomy,
atmospheric science, and fluid mechanics. 

\appendix

\section{Wavelet Scattering Symmetries} \label{sec: scat 2nd layer
  symmetry}
We prove that second layer scattering terms $\| |\rho \ast \psi_{j,
  \cdot} | \ast \psi_{j', \theta' + \cdot} \|_p^p$, for $p = 1,2$, $j,
j' \in \Z$, and $\theta' \in [0,\pi]^2$, in fact only depend on a one
dimensional angle parameter as opposed to the two dimensional
$\theta'$. In doing so, we give a more detailed explanation of the
wavelet and scattering symmetries described in \cref{sec:
  wavelet transform,sec: scattering rep}, and show how one
might practically implement the scattering transform to take these
symmetries into account. 

To do so, we index rotations of the wavelet $\psi$ by unit vectors
$\eta \in S^2$ as opposed to the Euler angle $\theta$. Recall
$\psi$ is assumed to have two symmetry properties: (i)
$\psi^{\ast}(u) = \psi(-u)$, and (ii) $\psi$ is symmetric about an axis $\eta_0$, meaning $\psi (r
u) = \psi (u)$ for any $r \in \OO (3)$ that fixes $\eta_0$, i.e., $r
\eta_0 = \eta_0$. Define:
\begin{equation*}
\psi_{j,r} (u) = 2^{-3j} \psi (2^{-j} r^{-1} u), \quad j \in \Z, \, r
\in \OO (3),
\end{equation*}
and recall that $\OO (3) = \SO (3) \times \{I, -I\}$. Since we exclusively
consider the wavelet modulus transform, and since property (i) implies
$|\rho \ast \psi_{j,r}| = | \rho \ast \psi_{j,   -r}|$, we can
restrict $\psi_{j, r}$ to $r \in \SO (3)$. Furthermore, let $r_{\eta}
\in \SO (3)$ denote a rotation that moves $\eta_0$ to $\eta$:
\begin{equation} \label{eqn: rotation a onto b}
r_{\eta} \eta_0 = \eta.
\end{equation}
The wavelet $\psi_{j, r_{\eta}}$ is symmetric about $\eta$, and
furthermore
\begin{equation}\label{eqn: wavelet axis symmetry}
\psi_{j, r_{\eta}} = \psi_{j, \tilde{r}_{\eta}},
\end{equation}
for any two rotations $r_{\eta}, \widetilde{r}_{\eta} \in \SO (3)$
defined by \eqref{eqn: rotation a onto b}. Therefore we can index the
rotated wavelet by $\eta \in S^2$, and set:
\begin{equation} \label{eqn: wavelet indexed by S^2}
\psi_{j, \eta} (u) = 2^{-3j} \psi (2^{-j} r_{\eta}^{-1} u), \quad j
\in \Z, \, \eta \in S^2.
\end{equation}

Second layer scattering terms are represented as follows. The second
wavelet modulus transform is computed as:
\begin{equation*}
|| \rho \ast \psi_{j, \eta} | \ast \psi_{j', r_{\eta} \gamma} (u) | =
\left| \left| \int_{\R^3} \rho (v) 2^{-3j} \psi (2^{-j} r_{\eta}^{-1}
    (\cdot-v) ) \, dv \right| \ast \psi_{j', r_{\eta} \gamma} (u) \right|,
\end{equation*}
where $j' \in \Z$ and $\gamma \in S^2$. Similar to the Euler angle
presentation, the orientation of the second wavelet is decomposed as
an increment of the first wavelet orientation. In this case, the axis
$\eta_0$ is first moved to $\gamma$, and then $\gamma$ is moved to a
new axis by the same rotation that moves $\eta_0$ to $\eta$. Isometry
invariant scattering coefficients are derived by integrating over $\eta$
and $u$:
\begin{equation*}
\| | \rho \ast \psi_{j, \cdot} | \ast \psi_{j', r_{\cdot} \gamma}
\|_p^p = \int_{\R^3} \int_{S^2} || \rho \ast \psi_{j, \eta} | \ast
\psi_{j', r_{\eta} \gamma} (u) |^p \, d\eta \, du.
\end{equation*}

In the vector presentation, second layer scattering coefficients are a priori
indexed by $j$, $j'$, and $\gamma \in S^2$. We now
show that they only depend on the one dimensional angle between $\eta$
and $r_{\eta} \gamma$, as opposed to the two dimensional
$\gamma$. First note that this angle is indeed fixed and is equal to
$\arccos (\eta_0 \cdot \gamma)$, since $\eta \cdot
r_{\eta} \gamma = r_{\eta}^{-1} \eta \cdot \gamma = \eta_0 \cdot
\gamma$. Now let $\delta \in S^2$ be a second vector such
that $\eta_0 \cdot \delta = \eta_0 \cdot \gamma$, which
implies that there exists a $r_{\eta_0} \in \SO (3)$ such that
$r_{\eta_0} \delta = \gamma$. Therefore:
\begin{equation*}
r_{\gamma} \eta_0 = r_{r_{\eta_0} \delta}
\eta_0 = r_{\eta_0} \delta =
r_{\eta_0} r_{\delta} \eta_0,
\end{equation*}
for any $r_{\gamma}, r_{\delta} \in \SO (3)$. In particular, we can
take $r_{\gamma} = r_{\eta_0} r_{\delta}$. This yields:
\begin{equation*}
r_{\eta} \gamma = r_{\eta} r_{\gamma} \eta_0 = r_{\eta} r_{\eta_0}
r_{\delta} \eta_0  = r_{\eta} r_{\eta_0} \delta.
\end{equation*}
But $r_{\eta} r_{\eta_0} \eta_0 = \eta$, which implies
$\widetilde{r}_{\eta} = r_{\eta} r_{\eta_0}$. Therefore $r_{\eta}
\gamma = \widetilde{r}_{\eta} \delta$, which completes the proof by
\eqref{eqn: wavelet axis symmetry}. 

The rotation $r_{\eta}$ in \eqref{eqn: wavelet indexed by S^2} can be
any rotation satisfying \eqref{eqn: rotation a onto b}. In practice
one can use the following construction. Let
\begin{equation*}
\nu = \eta_0 \times \eta,
\end{equation*}
where $\times$ denotes the usual cross product. Define the
skew symmetric cross product matrix of $\nu$ as:
\begin{equation*}
[\nu]_{\times} = \left(
\begin{array}{ccc}
0 & -\nu_3 & \nu_2 \\
\nu_3 & 0 & -\nu_1 \\
-\nu_2 & \nu_1 & 0
\end{array}
\right),
\end{equation*}
where $\nu = (\nu_1, \nu_2, \nu_3)$. One can then take $r_{\eta}$ to be:
\begin{equation*}
r_{\eta} = \left\{
\begin{array}{ll}
I, & \eta = \eta_0, \\[5pt]
I + 2 [\nu]_{\times}^2, & \eta = -\eta_0, \\[5pt]
I + [\nu]_{\times} + \dfrac{1 - \eta_0 \cdot \eta}{\| \nu \|^2} \, [\nu]_{\times}^2, & \text{otherwise}.
\end{array}
\right.
\end{equation*}
If ones takes $\eta_0 = (1, 0, 0)$, then by the above proof, second
layer terms are only distinguished by the value $\eta_0 \cdot \gamma =
\gamma_1$, where $\gamma = (\gamma_1, \gamma_2, \gamma_3)$. Therefore
one can parameterize the second layer with $(j, j', t) \in \Z \times
\Z \times [0,1]$, utilizing coefficients:
\begin{align*}
\| | \rho \ast \psi_{j, \cdot} | \ast \psi_{j', r_{\cdot} \gamma (t)}
\|_p^p &= \int_{\R^3} \int_{S^2} || \rho \ast \psi_{j, \eta} | \ast
\psi_{j', r_{\eta} \gamma (t)} (u) |^p \, d\eta \, du, \\
&\text{where } \gamma (t) = (1-t,  \sqrt{t(2-t)}, 0), \enspace t \in [0,1].
\end{align*}

\section{Coulomb Regression Lemmas from \Cref{sec:classical physics}}

\subsection{Proof of \cref{lem: coulomb frequency
    cut}} \label{sec: proof of coulomb frequency cut}

Recall \cref{lem: coulomb frequency cut} from \cref{CoulombFourier}.

\begin{lemma}[Restatement of \cref{lem: coulomb frequency cut}]
Let $\rho$ be the sum of a Lipschitz function in $\Lu (\R^3)$ and a finite number of point charges. Then:
\begin{enumerate}[nolistsep]
\item
There exists a constant $C$ such that for any $\epsilon > 0$,
\begin{equation}
\label{cutint1}
\int_{|\om| < \epsilon} \frac{|\hrho (\omega)|^2}{|\omega|^2} \,
d\omega \leq C \cdot \| \rho \|_1^2 \cdot \epsilon.
\end{equation}

\item
For each $0 < \beta < 1$, there exists a constant $C$ such that for
any $0 < \epsilon < 1$,
\begin{equation}
\label{cutint2}
\int_{|\om| > \epsilon^{-1}} \frac{|\hrho (\omega)|^2}{|\omega|^2} \,
d\omega \leq C \cdot \| \rho \|_1^2 \cdot \epsilon^{\beta}.
\end{equation}

\end{enumerate}
\end{lemma}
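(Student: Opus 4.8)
The plan is to reduce both bounds to the single elementary estimate $|\hat\rho(\omega)| \le \|\rho\|_1$, valid for every $\omega$ because $\hat\rho(\omega) = \int_{\R^3} e^{-iu\cdot\omega}\,d\rho(u)$ and $\|\rho\|_1$ denotes the total variation of $\rho$ (including its point masses). The low-frequency estimate \eqref{cutint1} is then immediate: in $\R^3$ the weight $|\omega|^{-2}$ is integrable at the origin, with $\int_{|\omega|<\epsilon}|\omega|^{-2}\,d\omega = 4\pi\epsilon$, so $\int_{|\omega|<\epsilon}|\hat\rho(\omega)|^2|\omega|^{-2}\,d\omega \le \|\rho\|_1^2\int_{|\omega|<\epsilon}|\omega|^{-2}\,d\omega = 4\pi\|\rho\|_1^2\,\epsilon$, which is \eqref{cutint1} with $C = 4\pi$. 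No decay of $\hat\rho$ is used here.

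The high-frequency estimate \eqref{cutint2} is the substantive part, and I would split $\rho = g + \mu$ into its Lipschitz $\Lu$ part $g$ (with exponential decay, as in the governing hypotheses) and its point-charge part $\mu = \sum_m q_m\,\delta(\cdot - a_m)$. For $g$ I would extract polynomial decay of $\hat g$ from the Lipschitz regularity by the standard half-period shift: since $e^{-i\pi\omega\cdot\omega/|\omega|^2} = -1$, one has $2\hat g(\omega) = \int_{\R^3}(g(u) - g(u - h))\,e^{-iu\cdot\omega}\,du$ with $h = \pi\omega/|\omega|^2$ and $|h| = \pi/|\omega|$, whence $|\hat g(\omega)| \le \tfrac12\|g - g(\cdot - h)\|_1 = O(|\omega|^{-1})$ because $g$ is Lipschitz with effectively bounded support. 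Feeding this into the radial form $\int_{|\omega|>\epsilon^{-1}}|\hat g|^2|\omega|^{-2}\,d\omega = \int_{\epsilon^{-1}}^\infty\int_{S^2}|\hat g(r\eta)|^2\,d\eta\,dr$ yields an $O(\epsilon)$ contribution, comfortably within \eqref{cutint2}.

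The point charges are the main obstacle, for two distinct reasons. First, $\hat\mu(\omega) = \sum_m q_m e^{-ia_m\cdot\omega}$ does not decay, and its diagonal self-interaction would contribute the divergent $\sum_m q_m^2\int_{|\omega|>\epsilon^{-1}}|\omega|^{-2}\,d\omega = +\infty$; this term is exactly the self-energy excluded by the Cauchy principal value defining $U(\rho)$ (the excision $|u-v|>\eta$ never retains a point mass against itself), so it must be dropped on the Fourier side as well. Second, once the diagonal is removed, the remaining off-diagonal terms of $|\hat\mu|^2$ and the cross terms $\hat g\,\overline{\hat\mu}$ are oscillatory integrals of the form $\int_{|\omega|>\epsilon^{-1}} e^{-ib\cdot\omega}|\omega|^{-2}\,d\omega$ with $b = a_m - a_{m'} \neq 0$; using the spherical average $\int_{S^2}e^{-irb\cdot\eta}\,d\eta = 4\pi\,\sin(r|b|)/(r|b|)$ and integrating by parts in $\int_{\epsilon^{-1}|b|}^\infty s^{-1}\sin s\,ds = O(\epsilon|b|^{-1})$ shows each such term is $O(\epsilon)$. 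The delicate point is the $g$–$\mu$ cross term: the $|\omega|^{-1}$ decay of $\hat g$ paired against the bounded, non-decaying $\hat\mu$ is logarithmically borderline and is tamed only by the oscillation of $\hat\mu$, so that to absorb the geometry-dependent constants into the single universal scale $\|\rho\|_1^2$ uniformly one must sacrifice an arbitrarily small amount of decay — which is precisely why the exponent in \eqref{cutint2} is an arbitrary $\beta < 1$ rather than $\beta = 1$.
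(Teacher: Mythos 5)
Your treatment of the cross term between the Lipschitz part $g$ and the point charges $\mu$ is a genuine gap, and it is the one place where your route and the paper's diverge in an essential way. The cross terms are \emph{not} of the pure-phase form $\int_{|\omega|>\epsilon^{-1}} e^{-ib\cdot\omega}|\omega|^{-2}\,d\omega$ that you compute exactly; they are $\sum_m q_m \int_{|\omega|>\epsilon^{-1}} \hat{g}(\omega)\, e^{i a_m\cdot\omega}\, |\omega|^{-2}\, d\omega$, with the phase weighted by $\hat{g}$, and, as you yourself concede, the size bound $|\hat{g}(\omega)| = O(|\omega|^{-1})$ alone leaves a logarithmically divergent integral. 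At that point your proposal stops at the assertion that the oscillation of $\hat{\mu}$ ``tames'' the term. That is precisely the missing step: to carry it out along your lines you would need a stationary-phase estimate on the sphere, e.g.\ $\int_{S^2} \hat{g}(r\eta)\, e^{ir a\cdot\eta}\, d\eta = O(r^{-2})$, and this requires decay of the angular derivatives of $\hat{g}$, hence moment or decay hypotheses on $g$ that you never set up or verify. The paper never needs any oscillation argument for the cross terms: it bounds them by Cauchy--Schwarz (H\"older), $|\mathrm{cross}| \leq \big( \int_{|\omega|>\epsilon^{-1}} |\hrho_{\mathrm{n}}|^2 |\omega|^{-2} \big)^{1/2} \big( \int_{|\omega|>\epsilon^{-1}} |\hrho_0|^2 |\omega|^{-2} \big)^{1/2}$, reducing everything to the two pure cases (with the point-charge factor understood, as in its own point-charge lemma, with the diagonal removed), which are then handled separately in \cref{lem: Lip cut} and \cref{lem: point charge cut}.

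The rest of your proposal is sound and partly takes a different, legitimate route. Part 1 is identical to the paper's proof. For the Lipschitz part, the paper simply asserts the decay $|\hrho_0(\omega)| = O(\|\rho_0\|_1 (1+|\omega|)^{-1})$ that you derive by the half-period shift (your ``effectively bounded support'' hides a harmless logarithmic correction coming from the exponential decay). For the pure point-charge part, the paper proceeds in physical space: it majorizes the indicator of $\{|\omega|>\epsilon^{-1}\}$ by $1 - \hphi_\epsilon(\omega)$ for a mollifier with compactly supported Fourier transform and then bounds $|V(r_k-r_l) - V \ast \phi_\epsilon(r_k-r_l)| \leq C\epsilon^{\beta}$ (\cref{lem: V approx by phi}); your explicit computation via $\int_{S^2} e^{-irb\cdot\eta}\, d\eta = 4\pi \sin(r|b|)/(r|b|)$ and $\int_T^{\infty} s^{-1}\sin s\, ds = O(T^{-1})$ is correct and in fact sharper for that piece, giving $O(\epsilon/|b|^2)$ rather than $O(\epsilon^{\beta})$; configuration-dependent constants are permitted, since the paper's constant also depends on the minimal separation $\delta$. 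Your observation that the diagonal self-energy must be discarded is also well taken --- it is exactly why the paper's computation only ever involves $\sum_{k\neq l}$, consistent with the principal-value convention. However, your closing diagnosis of why the exponent is $\beta < 1$ rather than $1$ is not correct: the loss has nothing to do with absorbing geometry-dependent constants uniformly (the constant $C$ is allowed to depend on the configuration); in the paper it comes from the mollifier tail estimates in \cref{lem: V approx by phi}, namely the terms $\epsilon^{n-3}\gamma^{3-n}$ and $\epsilon^{n-3}\gamma^{3/2-n}$, which force $\gamma = \epsilon^{\beta}$ with $\beta < 1$.
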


\begin{proof}[Proof of \eqref{cutint1}]
Since $\rho \in \Lu (\R^3)$, we have $\hat{\rho} \in
\mathbf{L}^{\infty}(\R^3)$ with $\| \hat{\rho} \|_{\infty} \leq \|
\rho \|_1$. Using this fact with a change to spherical coordinates yields:
\begin{align*}
\int_{|\omega| < \epsilon} \frac{|\hat{\rho} (\omega)|^2}{|\omega|^2}
  \, d\omega &\leq \| \hat{\rho} \|_{\infty}^2 \int_{|\omega| <
               \epsilon} |\omega|^{-2} \, d\omega, \\
&\leq \| \rho \|_1^2 \int_{0}^{2\pi} \int_{0}^{\pi}
  \int_{0}^{\epsilon} \alpha^{-2} \cdot \alpha^2
  \cdot \sin \vartheta \, d\alpha \, d\vartheta \, d\varphi, \\
&\leq C \cdot \| \rho \|_1^2 \cdot \epsilon.
\end{align*}
\end{proof}

The proof of \eqref{cutint2} is more delicate and will be broken
into two cases. Recall that $\rho = \nrho - \rho_0$ where $\rho_0 \in
\Lu (\R^3)$ is Lipchitz, and $\nrho$ is the sum of a
finite number of point charges. Thus:
\begin{equation*}
\int_{\omega > \epsilon^{-1}}
\frac{|\hat{\rho}(\omega)|^2}{|\omega|^2} \, d\omega  =
\int_{|\omega| > \epsilon^{-1}} \frac{
  |\hat{\rho}_{\mathrm{n}}(\omega)|^2 + |\hat{\rho}_0(\omega)|^2 -
  \hat{\rho}_{\mathrm{n}}(\omega) \hat{\rho}_0^{\ast}(\omega) -
  \hat{\rho}_{\mathrm{n}}^{\ast} (\omega) \hat{\rho}_0 (\omega)}{|\omega|^{2}} \, d\omega. 
\end{equation*}
Additionally using H\"{o}lder's inequality,
\begin{equation*}
\left| \int_{|\omega| > \epsilon^{-1}}
  \frac{\hat{\rho}_{\mathrm{n}}(\omega)
    \hat{\rho}_0^{\ast}(\omega)}{|\omega|^2} \, d\omega \right| \leq
\left( \int_{|\omega| > \epsilon^{-1}} \frac{|\hat{\rho}_{\mathrm{n}}
    (\omega)|^2}{|\omega|^2} \, d\omega \right)^{\frac{1}{2}} \left(
  \int_{|\omega| > \epsilon^{-1}} \frac{|\hat{\rho}_0
    (\omega)|^2}{|\omega|^2} \, d\omega \right)^{\frac{1}{2}}.
\end{equation*}
Thus we have separated the proof of \eqref{cutint2} into two
sub-cases, one for Lipschitz functions and one for sums of point
charges. We begin with the former.

\begin{lemma} \label{lem: Lip cut}
If $\rho \in \Lu (\R^3)$ is Lipschitz, there exists a
constant $C \geq 1$ such that for any $0 < \epsilon < 1$,
\begin{equation*}
\int_{\omega > \epsilon^{-1}}
\frac{|\hat{\rho}(\omega)|^2}{|\omega|^2} \, d\omega \leq C \cdot
\| \rho \|_1^2 \cdot \epsilon.
\end{equation*}
\end{lemma}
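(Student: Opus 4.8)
The plan is to exploit the duality between smoothness and high-frequency decay: since $\rho$ is Lipschitz and (in the setting where this lemma is invoked, namely the Lipschitz part $\rho_0$ of the theorem's density) decays exponentially, its distributional gradient $\nabla\rho$ lies in $\Lu(\R^3)$, and this integrability forces $\hrho$ to decay like $|\om|^{-1}$ at high frequencies. First I would upgrade the elementary bound $|\hrho(\om)|\le\|\rho\|_1$ using the gradient. Because $\rho$ decays, integration by parts gives $\widehat{\partial_k\rho}(\om)=i\om_k\,\hrho(\om)$ for each coordinate $k$, so that $|\om|^2|\hrho(\om)|^2=\sum_k|\widehat{\partial_k\rho}(\om)|^2\le\sum_k\|\partial_k\rho\|_1^2=:M^2$, which yields the pointwise decay estimate $|\hrho(\om)|\le M/|\om|$.

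With this in hand, the second step is a direct computation in spherical coordinates. Inserting the decay bound into the integral gives
\begin{equation*}
\int_{|\om|>\epsilon^{-1}}\frac{|\hrho(\om)|^2}{|\om|^2}\,d\om \;\le\; M^2\int_{|\om|>\epsilon^{-1}}|\om|^{-4}\,d\om \;=\; 4\pi M^2\int_{\epsilon^{-1}}^{\infty}\alpha^{-4}\cdot\alpha^2\,d\alpha \;=\; 4\pi M^2\,\epsilon,
\end{equation*}
where the factor $|\om|^{-4}$ is exactly integrable at infinity in three dimensions against the radial measure $\alpha^2\,d\alpha$. The final step is cosmetic: since the constant $C$ may depend on $\rho$, I would set $C=\max(1,\,4\pi M^2/\|\rho\|_1^2)$ (the case $\rho\equiv 0$ being trivial), so that the bound takes the advertised form $C\cdot\|\rho\|_1^2\cdot\epsilon$ with $C\ge 1$, matching the normalization of the companion low-frequency estimate \eqref{cutint1}.

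The main obstacle is justifying that $M=(\sum_k\|\partial_k\rho\|_1^2)^{1/2}$ is finite, i.e.\ that $\nabla\rho\in\Lu(\R^3)$. Lipschitz regularity by itself only gives $\nabla\rho\in\mathbf{L}^\infty$, which is not integrable over $\R^3$; it is precisely the exponential decay of the electronic density, together with the decay of its derivatives away from the nuclear cusps, that makes $\nabla\rho$ integrable and thereby validates the integration by parts. I would therefore foreground this decay hypothesis at exactly this point. As a remark, if one insists on assuming only Lipschitz continuity and $\Lu$ membership with no decay rate, the gradient route is unavailable and one must instead bound the $\Lu$ modulus of continuity $\|\rho-\rho(\cdot+h)\|_1$ with $|h|=\pi/|\om|$ through the translation identity $2\,\hrho(\om)=\int_{\R^3}\bigl(\rho(u)-\rho(u+h)\bigr)e^{-iu\cdot\om}\,du$; splitting the resulting integral into a near-origin region controlled by the Lipschitz constant and a tail controlled by the $\Lu$ decay yields only the weaker rate $O(\epsilon^{1-\delta})$, which is why the sharp $O(\epsilon)$ statement genuinely rests on the integrability of the gradient.
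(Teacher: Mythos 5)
Your proof is correct (modulo the hypothesis you yourself flag) and has the same skeleton as the paper's: a pointwise decay bound $|\hrho(\om)| \lesssim |\om|^{-1}$ followed by integration in spherical coordinates, where the Jacobian $\alpha^2$ turns $|\om|^{-4}$ into $\alpha^{-2}$, whose tail integral over $(\epsilon^{-1},\infty)$ is exactly $\epsilon$. The difference lies in how the decay is justified. The paper simply asserts, in \eqref{eqn: rho lip fourier decay}, that Lipschitz continuity gives $|\hrho(\om)| \leq \sqrt{C'}\,\|\rho\|_1 (1+|\om|)^{-1}$; you instead derive $|\hrho(\om)| \leq M |\om|^{-1}$ from the identity $\widehat{\partial_k \rho}(\om) = i \om_k \hrho(\om)$, which costs the extra hypothesis $\nabla \rho \in \Lu(\R^3)$. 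These are essentially equivalent inputs: $O(|\om|^{-1})$ Fourier decay of an $\Lu$ function rests on a first-order bound in $\Lu$, namely $\| \rho - \rho(\cdot - h) \|_1 \lesssim |h|$, which is exactly what gradient integrability provides and what pointwise Lipschitz continuity alone does not --- so the paper's one-line justification of \eqref{eqn: rho lip fourier decay} implicitly uses the same strengthened hypothesis that you make explicit. Your foregrounding of this point, and its repair via the decay of the electronic density \emph{and of its derivatives}, is the genuine added value of your write-up; note that decay of $\rho$ itself is not sufficient, since a function such as $e^{-|u|} \sin(e^{|u|})$ is Lipschitz with exponential decay yet has a bounded, non-integrable gradient, so the decay of the derivatives really is needed. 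One inaccuracy in your closing remark: assuming only Lipschitz continuity and $\rho \in \Lu$ with literally no decay rate, the translation-identity argument yields no quantitative rate at all (only Riemann--Lebesgue decay); the $O(\epsilon^{1-\delta})$ rate you quote already uses the exponential decay present in the hypotheses of \cref{prop: fourier regression}.
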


\begin{proof}[Proof of \cref{lem: Lip cut}]
Since $\rho$ is Lipschitz, there exists $C' \geq 1$ such that
\begin{equation} \label{eqn: rho lip fourier decay}
|\hrho(\omega)| \leq \frac{\sqrt{C'} \| \rho \|_1}{1+|\omega|}.
\end{equation}
Therefore:
\begin{align*}
\int_{|\omega| > \epsilon^{-1}}  \frac{|\hrho
  (\omega)|^2}{|\omega|^2} \, d\omega &\leq C' \| \rho \|_1^2 \int_{|\omega| >
\epsilon^{-1}} (1 + |\omega|)^{-2} \cdot |\omega|^{-2} \, d\omega, \\
&= C' \| \rho \|_1^2 \int_{0}^{2\pi} \int_{0}^{\pi} \int_{\epsilon^{-1}}^{\infty} (1 +
  \alpha)^{-2} \cdot \alpha^{-2} \cdot \alpha^2 \cdot \sin \vartheta
  \, d\alpha \, d\vartheta \, d\varphi,\\
&\leq C \cdot \| \rho \|_1^2 \cdot \epsilon.
\end{align*}
\end{proof}

For point charges we have:

\begin{lemma} \label{lem: point charge cut}
If $\rho$ is the sum of a finite number of point charges and $0 <
\beta < 1$, there exists a constant $C$ such that for any $0 <
\epsilon < 1$,
\begin{equation*}
\int_{\omega > \epsilon^{-1}}
\frac{|\hat{\rho}(\omega)|^2}{|\omega|^2} \, d\omega \leq C
\cdot \| \rho \|_1^2 \cdot \epsilon^{\beta}.
\end{equation*}
\end{lemma}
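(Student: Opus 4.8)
The plan is to exploit cancellation in the oscillatory integral, since the crude bound $|\hrho(\om)| \le \|\rho\|_1$ used for \cref{lem: Lip cut} is useless here. Writing $\rho = \sum_k z_k \delta(\cdot - r_k)$ gives $\hrho(\om) = \sum_k z_k e^{-i r_k \cdot \om}$, so $|\hrho(\om)|^2 = \sum_k z_k^2 + \sum_{k \neq l} z_k z_l\, e^{-i(r_k - r_l)\cdot\om}$, and the diagonal term $\sum_k z_k^2$ integrated against $|\om|^{-2}$ over $|\om| > \epsilon^{-1}$ diverges. This divergence is exactly the unphysical Coulomb self-energy of the nuclei, which is discarded by the principal-value definition of $U(\rho)$ in \eqref{Coulomb} (consistently, the physical nucleus-nucleus energy is the off-diagonal sum $\frac12\sum_{k\neq l} z_k z_l/|r_k - r_l|$). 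Hence the quantity to estimate is the off-diagonal oscillatory sum $\sum_{k\neq l} z_k z_l \int_{|\om|>\epsilon^{-1}} |\om|^{-2}\, e^{-i(r_k-r_l)\cdot\om}\,d\om$, where each summand must be read as the improper integral $\lim_{R'\to\infty}\int_{\epsilon^{-1}<|\om|<R'}$, since it is only conditionally convergent.

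First I would fix a pair $k\neq l$, set $a = r_k - r_l \neq 0$ and $r = |a|$, and evaluate the integral on a bounded annulus $\epsilon^{-1} < |\om| < R'$, where Fubini applies. Passing to spherical coordinates $\om = \alpha\eta$ and carrying out the angular integral first uses the classical identity $\int_{S^2} e^{-i\alpha\, \eta\cdot a}\,d\eta = 4\pi\,\frac{\sin(\alpha r)}{\alpha r}$. The radial Jacobian $\alpha^2$ cancels $|\om|^{-2}$, reducing the pair integral to the one-dimensional tail $\frac{4\pi}{r}\int_{\epsilon^{-1}}^{R'} \frac{\sin(\alpha r)}{\alpha}\,d\alpha = \frac{4\pi}{r}\int_{\epsilon^{-1} r}^{R' r}\frac{\sin s}{s}\,ds$; letting $R'\to\infty$ gives $\frac{4\pi}{r}\int_{\epsilon^{-1} r}^{\infty}\frac{\sin s}{s}\,ds$, which converges.

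The key decay estimate is then an integration by parts on the sine-integral tail: $\int_X^\infty \frac{\sin s}{s}\,ds = \frac{\cos X}{X} - \int_X^\infty \frac{\cos s}{s^2}\,ds$, so $\bigl|\int_X^\infty \frac{\sin s}{s}\,ds\bigr| \le 2/X$. With $X = \epsilon^{-1} r$ this bounds the pair integral by $\frac{4\pi}{r}\cdot\frac{2\epsilon}{r} = 8\pi\,\epsilon/r^2$. Summing over pairs and using $|r_k - r_l| \ge d_{\min} := \min_{k\neq l}|r_k - r_l|$ together with $\sum_{k\neq l}|z_k|\,|z_l| \le \bigl(\sum_k |z_k|\bigr)^2 = \|\rho\|_1^2$ yields $\bigl|\int_{|\om|>\epsilon^{-1}} |\hrho(\om)|^2\,|\om|^{-2}\,d\om\bigr| \le \frac{8\pi}{d_{\min}^2}\,\|\rho\|_1^2\,\epsilon$. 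Since $0<\epsilon<1$ forces $\epsilon \le \epsilon^\beta$ for every $\beta < 1$, this establishes the claimed bound (indeed with the stronger exponent $\beta = 1$), with $C = 8\pi/d_{\min}^2$ depending on the molecular geometry but not on $\epsilon$.

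I expect the main obstacle to be conceptual rather than computational: the integral in the statement is not absolutely convergent, so the proof must first legitimately excise the diagonal self-energy (justified by the principal-value convention) and then handle the conditionally convergent off-diagonal integrals via the annulus-truncation-and-limit argument above. Performing the angular integration before the radial one is only valid on the bounded annuli, and one must check that the $R'\to\infty$ limit commutes with the finite sum over pairs, which is immediate once each pair integral is shown to converge. I would also remark that this argument yields an $O(\epsilon)$ tail, stronger than the stated $O(\epsilon^\beta)$; the weaker exponent is all that is needed downstream, where the point-charge bound is combined with \cref{lem: Lip cut} through the Hölder cross-term estimate to produce the overall $O(\epsilon^{1-\delta})$ rate of \cref{prop: fourier regression}.
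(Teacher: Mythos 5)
Your proof is correct and takes a genuinely different route from the paper's. The paper regularizes in frequency with a smooth cutoff: it bounds the tail by $\int |\hrho(\om)|^2 |\om|^{-2} (1-\hphi_{\epsilon}(\om))\, d\om$, where $\hphi_{\epsilon}$ is a $C^{\infty}$ bump supported in $\{|\om| \leq \epsilon^{-1}\}$, applies Parseval to move back to physical space, and reduces the claim to the mollification estimate $|V(r_k-r_l) - V\ast\phi_{\epsilon}(r_k-r_l)| \leq C\epsilon^{\beta}$ of \cref{lem: V approx by phi}; that estimate is proved by splitting the convolution integral at radius $\gamma = \epsilon^{\beta}$ and choosing the mollifier decay order $n = (6-\beta)/2(1-\beta)$, which is precisely where the loss to $\epsilon^{\beta}$ with $\beta < 1$ occurs (the required $n$ blows up as $\beta \to 1$). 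You instead stay entirely in frequency space: the explicit spherical average $\int_{S^2} e^{-i\alpha \eta \cdot a}\, d\eta = 4\pi \sin(\alpha |a|)/(\alpha |a|)$ collapses each off-diagonal pair to a sine-integral tail, and a single integration by parts gives the $2/X$ bound, hence $O(\epsilon)$ per pair --- a sharper rate ($\beta = 1$) than the paper obtains. Both arguments perform the same renormalization: your explicit discarding of the divergent diagonal $\sum_k z_k^2$ is exactly what the paper does implicitly when its Parseval step produces sums over $k \neq l$ only (its principal-value convention), and both constants necessarily depend on the minimal separation between charges --- the paper through the parameter $\delta$ in \cref{lem: V approx by phi}, you through $8\pi/d_{\min}^2$; this dependence is unavoidable, since for fixed $\epsilon$ the pair integral grows like $1/|r_k - r_l|$ as two charges merge. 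What each approach buys: yours is more elementary and quantitatively stronger, exploiting the closed form of the angular integral available for the multiplier $|\om|^{-2}$ in $\R^3$; the paper's mollifier argument never uses that closed form and therefore transfers more readily to other singular kernels or other dimensions, at the cost of the arbitrarily small loss $\epsilon^{\beta}$, which is harmless downstream since the H\"{o}lder cross-term with \cref{lem: Lip cut} already caps the overall rate of \cref{prop: fourier regression} at $O(\epsilon^{1-\delta})$.
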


For the proof of \cref{lem: point charge cut} we define a
function $\phi: \R^3 \rightarrow \R$ with prescribed decay in space and smoothness in
frequency in order to approximate the point charges. The function
$\phi$ is defined through its Fourier transform. First define the
following bump function:
\begin{equation*}
b (\omega) = \left\{
\begin{array}{ll}
\exp \left(- \frac{1}{1 - |2\omega|^2} \right), & \text{for } |\omega| < 1/2, \\
0, & \text{for } |\omega| \geq 1/2.
\end{array}
\right.
\end{equation*}
Then take:
\begin{equation*}
\hphi (\omega) = C \cdot b \ast b (\omega),
\end{equation*}
where $C$ is chosen so that $\hphi (0) = 1$. By construction, $\hphi
(0) = 1$, $0 \leq \hphi \leq 1$, $\supp \, \hphi \subset \{ |\omega| <
1 \}$, and $\hphi \in C^{\infty} (\R^3)$. In space, one has $\phi \geq
0$, $\int \phi = 1$, and for each $n \in \N$, there exists a constant
$C_n > 0$ such that
\begin{equation*}
\phi (u) \leq \frac{C_n}{1 + |u|^n}.
\end{equation*}

Define the $s$-dilation of $\phi$ as:
\begin{equation*}
\phi_s (u) = s^{-3} \phi (s^{-1} u), \quad s > 0,
\end{equation*}
and note that:
\begin{equation*}
\int_{\R^3} \phi_s(u) \, du = 1, \quad 0 \leq \hphi_s \leq 1, \quad \text{and} \quad \supp \, \hphi_s
\subset \{ |\omega| \leq s^{-1} \}.
\end{equation*}

\begin{proof}[Proof of \cref{lem: point charge cut}]
Since $\rho$ is the sum of point charges we write it as:
\begin{equation*}
\rho (u) = \sum_k z_k \delta (u-r_k).
\end{equation*}
 Let $V(u) = |u|^{-1}$ so that $\widehat{V}(\omega) = 4\pi
 |\omega|^{-2}$. Using $\hphi_{\epsilon}$ and the Parseval
 formula we bound the quantity of interest:
\begin{align}
\int_{|\omega| > \epsilon^{-1}} \frac{|\hat{\rho}
  (\omega)|^2}{|\omega|^2} \, d\omega &\leq \int_{\R^3} \frac{|\hat{\rho}
  (\omega)|^2}{|\omega|^2} (1 - \hphi_{\epsilon} (\omega)) \,
d\omega, \nonumber \\
&= \frac{1}{4\pi} \left( \int_{\R^3} |\hat{\rho}(\omega)|^2
  \widehat{V}(\omega) \, d\omega - \int_{\R^3} |\hat{\rho}(\omega)|^2
  \widehat{V}(\omega) \hphi_{\epsilon}(\omega) \, d\omega \right), \nonumber \\
&= \frac{(2\pi)^3}{4\pi} \Bigg( \sum_{k \neq l} z_k z_l \Big( V(r_k-r_l) -
  V \ast \phi_{\epsilon} (r_k-r_l) \Big) \Bigg). \label{eqn: first
  space estimate}
\end{align}

\Cref{lem: V approx by phi} below proves that for each $0 < \beta
< 1$ there exists a constant $C$ such that:
\begin{equation} \label{eqn: second space estimate}
\forall \, k \neq l \text{ and } \forall \, \epsilon > 0, \quad |
V(r_k - r_l) - V \ast \phi_{\epsilon} (r_k - r_l) | \leq C \cdot \epsilon^{\beta}.
\end{equation}
Plugging \eqref{eqn: second space estimate} into \eqref{eqn: first
  space estimate} proves the result, since:
\begin{align*}
\frac{(2\pi)^3}{4\pi} \Bigg( \sum_{k \neq l} z_k z_l &\Big( V(r_k-r_l) -
  V \ast \phi_{\epsilon} (r_k-r_l) \Big) \Bigg) \leq \\
&\leq \frac{(2\pi)^3}{4\pi}  \sum_{k \neq l} z_k z_l | V(r_k - r_l) - V \ast
  \phi_{\epsilon} (r_k - r_l)|, \\
&\leq C \epsilon^{\beta} \sum_{k \neq l} z_k z_l, \\
&\leq C \cdot \| \rho \|_1^2 \cdot \epsilon^{\beta}.
\end{align*} 
\end{proof}

\begin{lemma} \label{lem: V approx by phi}
For each $0 < \beta < 1$ and $\delta > 0$, there exists a constant $C$
such that for any $\epsilon > 0$,
\begin{equation*}
\forall \, |u| \geq \delta, \quad | V(u) - V \ast \phi_{\epsilon} (u)
| \leq C \cdot \epsilon^{\beta}, \quad V(u) = |u|^{-1}.
\end{equation*}
\end{lemma}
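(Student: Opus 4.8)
The plan is to exploit that $\phi_\epsilon$ is an approximate identity with fast decay, so that $V \ast \phi_\epsilon$ differs from $V$ only through the tail of the mollifier and through the first-order variation of $V$ near $u$. Since $\int \phi_\epsilon = 1$, I would first rewrite the error as a single averaged difference,
\[
V(u) - V \ast \phi_\epsilon(u) = \int_{\R^3} \big( V(u) - V(u-y) \big)\, \phi_\epsilon(y)\, dy,
\]
and then split the $y$-integral into a near region $|y| \le \delta/2$ and a far region $|y| > \delta/2$. The point of the threshold $\delta/2$ is that on the near region the entire segment from $u$ to $u-y$ stays at distance at least $|u|/2 \ge \delta/2$ from the origin, so $V$ is smooth there, whereas on the far region we no longer control $V$ but the mass of $\phi_\epsilon$ is tiny.

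On the near region I would apply the mean value theorem together with $|\nabla V(y)| = |y|^{-2}$, which on the relevant segment is bounded by $4/|u|^2 \le 4/\delta^2$. This gives $|V(u)-V(u-y)| \le 4|y|/|u|^2$, and since the first moment satisfies $\int |y|\, \phi_\epsilon(y)\, dy = \epsilon \int |w|\, \phi(w)\, dw$ with $\int |w|\, \phi(w)\, dw$ finite (by the prescribed decay of $\phi$), the near region contributes $O(\epsilon)$, uniformly in $|u| \ge \delta$.

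The far region is the delicate part and is where the fast decay of $\phi$ must be used. I would bound the two terms separately. The term carrying $V(u)$ equals $|u|^{-1}$ times the tail mass $\int_{|y|>\delta/2} \phi_\epsilon(y)\, dy = \int_{|w| > \delta/(2\epsilon)} \phi(w)\, dw$; using $\phi(w) \le C_n |w|^{-n}$ this tail is $O(\epsilon^{n-3})$ for any $n > 3$. The term carrying $V(u-y) = |u-y|^{-1}$ contains the singularity of $V$ at $y = u$, so I would split it once more according to whether $|u-y| \ge |u|/2$ or $|u-y| < |u|/2$. Away from the singularity $|u-y|^{-1} \le 2/\delta$ and the same tail estimate applies; near the singularity I would combine the local integrability of $|z|^{-1}$ in three dimensions, namely $\int_{|z|<|u|/2} |z|^{-1}\, dz = O(|u|^2)$, with the fact that on this set $|y| \ge |u|/2$ forces $\phi_\epsilon(y) \le C_n \epsilon^{n-3}|y|^{-n} \le C_n' \epsilon^{n-3} |u|^{-n}$, yielding a bound $O(\epsilon^{n-3} |u|^{2-n})$ that is uniform over $|u| \ge \delta$. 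Choosing $n \ge 4$ makes every contribution $O(\epsilon)$, which in particular gives the claimed $O(\epsilon^\beta)$ for every $\beta < 1$, with room to spare.

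The main obstacle is precisely this last piece: controlling $\int_{|u-y|<|u|/2} |u-y|^{-1} \phi_\epsilon(y)\, dy$, where the integrable but genuine singularity of the Coulomb kernel meets the tail of the mollifier. The balance is resolved by playing the polynomial growth $\int_{|z|<r}|z|^{-1}\, dz \sim r^2$ of the singular integral against the arbitrarily fast polynomial decay of $\phi$, which is guaranteed by its compactly supported smooth Fourier transform $\hphi$.
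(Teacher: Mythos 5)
Your proof is correct, but it takes a genuinely different route from the paper's, and in fact yields a stronger bound. The paper splits the mollification integral at an $\epsilon$-dependent radius $\gamma=\epsilon^{\beta}$, estimates the near region by the Lipschitz modulus of $V$ on $|u|\geq\delta$ times the total mass of $\phi_{\epsilon}$ (giving $O(\gamma)$), controls the far-field singular piece by H\"{o}lder's inequality pairing the $\Ld$ tail of $\phi_{\epsilon}$ against the local square-integrability of $V$, and then balances the competing error terms $\gamma$ and $\epsilon^{n-3}\gamma^{3/2-n}$ by choosing $n=(6-\beta)/2(1-\beta)$; this optimization is exactly what limits the paper to $O(\epsilon^{\beta})$ with $\beta<1$. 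You instead split at the \emph{fixed} radius $\delta/2$, which is only possible because your near-field estimate is sharper: the mean value theorem along the segment (which stays at distance $\geq |u|/2$ from the origin) combined with the first moment $\int |y|\,\phi_{\epsilon}(y)\,dy = O(\epsilon)$ extracts a full factor of $\epsilon$ rather than a factor of the splitting radius. Your treatment of the singular piece is also different: where the paper uses Cauchy--Schwarz with $V\in\Ld_{\mathrm{loc}}$, you use the pointwise decay $\phi_{\epsilon}(y)\leq C_n\epsilon^{n-3}|y|^{-n}$ valid on $|y|>|u|/2$, paired with $\int_{|z|<R}|z|^{-1}\,dz=O(R^2)$, giving $O(\epsilon^{n-3}|u|^{2-n})$ uniformly over $|u|\geq\delta$. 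The net result is a bound $O(\epsilon)$ for $n\geq 4$, i.e.\ the lemma holds with $\beta=1$; propagated through \cref{lem: point charge cut} and \cref{lem: coulomb frequency cut}, this would sharpen the high-frequency cut to $O(\epsilon)$ and remove the $\delta$-loss in \cref{prop: fourier regression}, so the Fourier regression error would become $O(\epsilon)$ rather than $O(\epsilon^{1-\delta})$. The only (shared, trivial) caveat is that the stated inequality for all $\epsilon>0$ with $\beta<1$ requires the crude uniform bound on $|V-V\ast\phi_{\epsilon}|$ when $\epsilon\geq 1$, since a bound proportional to $\epsilon$ does not dominate $\epsilon^{\beta}$ there; this regime is irrelevant to how the lemma is used.
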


\begin{proof}
Fix $\beta $ and $\delta$, and let $n$ be an arbitrary positive
integer that we will select later. Using $\phi_{\epsilon} \geq 0$ and
$\int \phi_{\epsilon} = 1$, we compute:
\begin{align*}
| V(u) &- V \ast \phi_{\epsilon}(u) | = \left| \int_{\R^3} \phi_{\epsilon}
(v) (V(u) - V(u-v) ) \, dv \right|, \\
&\leq \underbrace{\int_{|v| < \gamma} \phi_{\epsilon}(v) |V(u) - V(u-v)| \, dv}_{\text{I}} +
  \underbrace{\int_{|v| > \gamma} \phi_{\epsilon} (v) |V(u) - V(u-v)| \, dv}_{\text{II}},
\end{align*}
where $\gamma > 0$ is a free parameter that we will set as $\gamma =
\epsilon^{\beta}$ at the end of the proof. 

To bound term I, we compute the modulus of continuity of $V$ on $|u|
\geq \delta$:
\begin{equation*}
\forall \, |u|, |v| \geq \delta, \quad | V(u) - V(v) | = \frac{| |v| -
  |u| |}{|u||v|} \leq \delta^{-2} |u-v|.
\end{equation*}
Thus:
\begin{equation*}
\text{I} = \int_{|v| < \gamma} \phi_{\epsilon}(v) |V(u) - V(u-v)| \, dv \leq
\delta^{-2} \cdot \gamma \cdot \int_{|v| < \gamma} \phi_{\epsilon} (v)
\, dv \leq \delta^{-2} \cdot \gamma.
\end{equation*}

Term II is more delicate and will require several steps. We begin
with the following upper bound:
\begin{align*}
\text{II} &= \int_{|v| > \gamma} \phi_{\epsilon} (v) |V(u) - V(u-v)|
  \, dv, \\
&\leq \underbrace{\int_{|u-v| > \gamma} \phi_{\epsilon}(u-v) V(u) \, dv}_{\text{II.A}} +
  \underbrace{\int_{|u-v| > \gamma}  \phi_{\epsilon} (u-v) V(v) \, dv}_{\text{II.B}}.
\end{align*}

The first term II.A is bounded by the following calculation:
\begin{align*}
\text{II.A} = \int_{|u-v| > \gamma} \phi_{\epsilon}(u-v) V(u) \, dv
  &\leq \delta^{-1} \int_{|v| > \gamma} \phi_{\epsilon} (v) \, dv,
  \\
&\leq C_n \cdot \delta^{-1} \cdot \epsilon^{n-3} \cdot \int_{|v| > \gamma} |v|^{-n} \, dv,
  \\
&\leq C(n, \delta) \cdot \epsilon^{n-3} \cdot \gamma^{3-n}, \quad \text{if } n > 3.
\end{align*}

Term II.B is split as:
\begin{align*}
\text{II.B} &= \int_{|u-v| > \gamma}  \phi_{\epsilon} (u-v) V(v) \,
dv, \\
&= \underbrace{\int_{\substack{|u-v| > \gamma\\|v| < 1}} \phi_{\epsilon}(u-v)
V(v) \, dv}_{\text{II.B.i}} + \underbrace{\int_{\substack{|u-v| > \gamma\\|v| > 1}} \phi_{\epsilon}(u-v)
V(v) \, dv}_{\text{II.B.ii}}.
\end{align*}
We bound II.B.i using H\"{o}lder's inequality followed by an argument
similar to the one used for II.A:
\begin{align*}
\text{II.B.i} &= \int_{\substack{|u-v| > \gamma\\|v| < 1}} \phi_{\epsilon}(u-v)
V(v) \, dv, \\
&\leq \left( \int_{|u-v| > \gamma} \phi_{\epsilon}(u-v)^2 \, dv
  \right)^{\frac{1}{2}} \left( \int_{|v| < 1} V(v)^2 \, dv
  \right)^{\frac{1}{2}}, \\
&\leq C(n) \cdot \epsilon^{n-3} \cdot \gamma^{3/2 - n}.
\end{align*}
Term II.B.ii is bounded with the same argument as II.A:
\begin{equation*}
\text{II.B.ii} = \int_{\substack{|u-v| > \gamma\\|v| > 1}} \phi_{\epsilon}(u-v)
V(v) \, dv \leq \int_{|v| > \gamma} \phi_{\epsilon} (v) \, dv \leq C(n)
\cdot \epsilon^{n-3} \cdot \gamma^{3-n}.
\end{equation*}

In summary,
\begin{align}
\forall \, |u| \geq \delta, \quad |V(u) &- V \ast \phi_{\epsilon}(u)|
  \leq \text{I} + \text{II.A} + \text{II.B.i} + \text{II.B.ii},
    \nonumber \\
&\leq C(n, \delta) \cdot  (\gamma + \epsilon^{n-3} \cdot
  \gamma^{3-n} + \epsilon^{n-3} \cdot \gamma^{3/2 - n} ). \label{eqn:
  summary bound 1}
\end{align}
Recall that $\gamma$ and $n$ are free parameters. Setting $\gamma =
\epsilon^{\beta}$ and $n = (6-\beta)/2(1-\beta)$ completes the proof.
\end{proof}

\subsection{Proof of \cref{lem: fourier riemann sum}} \label{sec:
  proof of fourier riemann sum}

Recall \cref{lem: fourier riemann sum} from \cref{CoulombFourier}.

\begin{lemma}[Restatement of \cref{lem: fourier riemann sum}]
If $\rho \in \Lu (\R^3)$ decays exponentially fast as $|u| \rightarrow
\infty$, then:
\begin{equation} \label{eqn: fourier proof sketch3 v2}
\int_{\epsilon}^{\epsilon^{-1}} \| \hrho_{\alpha} \|_2^2 \,
  d\alpha =  \frac{\epsilon}{2} \left( \| \hat{\rho}_{\epsilon} \|_2^2 +
\sum_{k=2}^{\epsilon^{-2}-1}  \| \hrho_{k\epsilon} \|_2^2 + \|
\hat{\rho}_{\epsilon^{-1}} \|_2^2 \right) + O(\epsilon).
\end{equation}
\end{lemma}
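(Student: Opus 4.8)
The plan is to recognize the right-hand side as the composite trapezoidal-rule approximation of the integral and then bound the quadrature error by the standard $C^2$ estimate. Write $g(\alpha) = \| \hrho_{\alpha} \|_2^2 = \int_{S^2} |\hrho(\alpha \eta)|^2 \, d\eta$, and note that the nodes $\alpha_k = k\epsilon$ for $1 \le k \le \epsilon^{-2}$ partition $[\epsilon, \epsilon^{-1}]$ into $N = \epsilon^{-2} - 1$ subintervals of equal width $\epsilon$ (assuming $\epsilon^{-2} \in \N$; otherwise one rounds the upper endpoint and absorbs the fractional piece into the $O(\epsilon)$ term). The bracketed expression, in which the two endpoint terms carry weight $1/2$ and the interior terms weight $1$ after the prefactor $\epsilon/2$, is exactly the composite trapezoidal sum $T_\epsilon$. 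The composite trapezoidal error estimate then gives
\begin{equation*}
\left| \int_{\epsilon}^{\epsilon^{-1}} g(\alpha)\, d\alpha - T_\epsilon \right| \le \frac{(\epsilon^{-1} - \epsilon)\, \epsilon^2}{12}\, \sup_{\alpha \in [\epsilon, \epsilon^{-1}]} |g''(\alpha)| \le \frac{\epsilon}{12}\, \| g'' \|_{\infty},
\end{equation*}
so it suffices to show $\| g'' \|_{\infty} < \infty$ with a constant independent of $\epsilon$.

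The key step is to establish that $g \in C^2$ with globally bounded second derivative, and this is precisely where the exponential-decay hypothesis enters. If $|\rho(u)| \le C e^{-a|u|}$ for some $a > 0$, then $|u|^m |\rho(u)|$ is integrable for every $m$, so one may differentiate $\hrho(\om) = \int_{\R^3} \rho(u)\, e^{-i u \cdot \om}\, du$ under the integral sign arbitrarily often, obtaining $\| \partial^{\beta} \hrho \|_{\infty} \le \int_{\R^3} |u|^{|\beta|} |\rho(u)|\, du < \infty$ for every multi-index $\beta$. Hence $h(\om) := |\hrho(\om)|^2 = \hrho(\om)\, \hrho^{*}(\om)$ lies in $C^{\infty}$ with bounded derivatives of all orders; in particular its Hessian $\nabla^2 h$ is bounded. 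Differentiating $g(\alpha) = \int_{S^2} h(\alpha \eta)\, d\eta$ twice under the compact spherical integral yields $g''(\alpha) = \int_{S^2} \eta^{\top} (\nabla^2 h)(\alpha \eta)\, \eta \, d\eta$, so that $|g''(\alpha)| \le 4\pi\, \| \nabla^2 h \|_{\infty}$ uniformly in $\alpha \in [0, \infty)$. Inserting this into the trapezoidal bound above gives the claimed $O(\epsilon)$ error.

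The main obstacle is exactly this uniform $C^2$ control of $g$: one must rule out blow-up of $g''$ both as $\alpha \to 0$ and as $\alpha \to \infty$, and ensure the bounding constant is $\epsilon$-independent. Exponential decay of $\rho$ is the hypothesis that delivers this, by forcing $\hrho$, and hence $h$ and $g$, to be smooth with globally bounded derivatives; mere membership $\rho \in \Lu$ would not suffice to control the high-frequency behavior of $g''$. The remaining ingredients --- legitimacy of differentiating under the two integral signs and the standard composite trapezoidal error formula --- are routine, as are the integer-grid and endpoint bookkeeping, whose effect is already absorbed into $O(\epsilon)$. I note finally that replacing the trapezoidal rule by a higher-order Newton--Cotes scheme would exploit higher derivatives of $g$ (bounded by the same differentiation argument) to trade accuracy against node count, consistent with the remark following \cref{prop: fourier regression}.
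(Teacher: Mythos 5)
Your proposal is correct and follows essentially the same route as the paper's proof: both identify the bracketed expression as the composite trapezoid rule on the nodes $k\epsilon$ and bound the quadrature error by the standard second-derivative estimate, using the exponential decay of $\rho$ to guarantee that $\hat{\rho}$ (hence $g(\alpha) = \|\hrho_\alpha\|_2^2$) is smooth with uniformly bounded derivatives, yielding an error of order $(\epsilon^{-1}-\epsilon)\epsilon^2 = O(\epsilon)$. The only difference is that you spell out the differentiation-under-the-integral argument for the uniform $C^2$ bound, which the paper simply asserts; you also correctly read the interior terms as carrying the trapezoidal weight (the missing factor of $2$ in the restated formula is a typo, as the main-text version of the lemma confirms).
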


\begin{proof}
The proof is an application of the trapezoid rule from numerical
integration. Recall that it approximates the integral $\int_a^b
g(\alpha) \, d\alpha$ as:
\begin{equation} \label{eqn: trapezoid rule}
\int_a^b g(\alpha) \, d\alpha = \frac{b-a}{2m} \left( g(a) +
  \sum_{k=2}^{m} 2 g(\alpha_k) + g(b) \right) -
\underbrace{\frac{(b-a)^3}{12m^2} g^{(2)}(\xi)}_{\text{error term}},
\end{equation}
where $\alpha_k = a + (k-1)(b-a)/m$ and $\xi \in [a,b]$. Take $a =
\epsilon$, $b = \epsilon^{-1}$, and $g(\alpha) = \|
\hat{\rho}_{\alpha} \|_2^2$. Since $\rho$ has exponential decay,
$\hat{\rho} \in C^{\infty}(\R^3)$ with bounded derivatives of all
orders. Therefore the error term can be bounded as:
\begin{equation*}
|\text{error term}| \leq C \frac{(\epsilon^{-1} - \epsilon)^3}{m^2} =
C \frac{\epsilon^3 (\epsilon^{-2} - 1)^3}{m^2}.
\end{equation*}
Thus if $m = \epsilon^{-2} - 1$, the resulting error term is $O(
\epsilon)$ and \eqref{eqn: fourier proof sketch3 v2} follows from
\eqref{eqn: trapezoid rule}. 
\end{proof}

Since $\hat{\rho}$ has bounded derivatives of all orders, \cref{lem:
  fourier riemann sum} can be refined to show the Coulomb
energy $U (\rho)$ can be regressed to accuracy $O(\epsilon)$ with
$O(\epsilon^{-1 - \frac{2}{n+1}})$ Fourier coefficients, for any
integer $n \geq 0$. The parameter $n$ corresponds to the order of the Newton-Cotes
numerical integration scheme \cite{hoffman:numericalMethods2001},
where for example $n=0$ and $n=1$ correspond to the rectangle and
trapezoid rules, respectively. We omit the details. 

\subsection{Proof of \cref{lem: coulomb frequency
    cut2}} \label{sec: proof of wavelet cut}

\Cref{lem: coulomb frequency cut2} is proven as a corollary
to \cref{lem: coulomb frequency cut}. 

\begin{lemma}[Restatement of \cref{lem: coulomb frequency cut2}]
Suppose that $\psi$ satisfies:
\begin{equation}
\sum_{j \in \Z} 2^{2j} \int_{[0,\pi]^2} | \hpsi (2^j r_{\theta}^{-1} \omega) |^2 \,
d\theta = |\omega|^{-2}, \quad \omega \neq 0, \label{eqn:
  littlewood-paley exact repeat}
\end{equation}
and
\begin{equation}
\supp \int_{[0,\pi]^2} | \hpsi (2^j r_{\theta}^{-1} \omega) |^2 \, d\theta \subset
\{ c_1 2^{-j} < |\omega| < c_2 2^{-j + 1} \}, \label{eqn: compact
support assumption}
\end{equation}
where $0 < c_1 \leq c_2 < \infty$ are universal constants. If $\rho$
is the sum of a Lipschitz function in $\Lu(\R^3)$ and
a finite sum of point charges, then there exists a constant $C$ such
that for any $0 < \epsilon < 1$,
\begin{equation*}
\sum_{j=-\infty}^{2\log_2 \epsilon} 2^{2j} \| \rho \ast \psi_{j, \cdot}
\|_2^2 \leq C \cdot \| \rho \|_1^2 \cdot \epsilon
\end{equation*}
and
\begin{equation*}
\sum_{j=-\log_2 \epsilon}^{+\infty} 2^{2j} \| \rho \ast \psi_{j,
  \cdot} \|_2^2 \leq C \cdot \| \rho \|_1^2 \cdot \epsilon.
\end{equation*}
\end{lemma}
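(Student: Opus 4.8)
The plan is to derive both estimates as direct corollaries of \cref{lem: coulomb frequency cut} by transferring each truncated wavelet sum into a truncated Fourier integral of the type already controlled there. First I would rewrite the summand in frequency: by the Plancherel identity used to obtain \eqref{Ustatem},
\[
2^{2j} \| \rho \ast \psi_{j,.} \|_2^2 = \frac{1}{(2\pi)^3} \int_{\R^3} |\hrho(\omega)|^2 \, 2^{2j} \int_{[0,\pi]^2} |\hpsi(2^j r_\theta^{-1} \omega)|^2 \, d\theta \, d\omega .
\]
Since every term is nonnegative, summing over any index set $J \subset \Z$ and applying Tonelli gives
\[
\sum_{j \in J} 2^{2j} \| \rho \ast \psi_{j,.} \|_2^2 = \frac{1}{(2\pi)^3} \int_{\R^3} |\hrho(\omega)|^2 \, g_J(\omega) \, d\omega, \quad g_J(\omega) := \sum_{j \in J} 2^{2j} \int_{[0,\pi]^2} |\hpsi(2^j r_\theta^{-1} \omega)|^2 \, d\theta .
\]

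Next I would extract two facts about $g_J$ from the hypotheses. Because all terms are nonnegative, the Littlewood-Paley identity \eqref{eqn: littlewood-paley exact repeat} bounds every partial sum by the full one, so $g_J(\omega) \leq |\omega|^{-2}$ for every $J$. The support condition \eqref{eqn: compact support assumption} localizes each term: the $j$-th summand vanishes unless $c_1 2^{-j} < |\omega| < c_2 2^{-j+1}$. For the fine-scale range $J = (-\infty, 2\log_2 \epsilon]$ one has $2^{-j} \geq \epsilon^{-2}$, forcing $|\omega| > c_1 \epsilon^{-2}$, so $g_J$ is supported in $\{ |\omega| > c_1 \epsilon^{-2} \}$; for the coarse-scale range $J = [-\log_2 \epsilon, +\infty)$ one has $2^{-j} \leq \epsilon$, forcing $|\omega| < 2 c_2 \epsilon$, so $g_J$ is supported in $\{ |\omega| < 2 c_2 \epsilon \}$. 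Combining the pointwise bound with these supports yields
\[
\sum_{j=-\infty}^{2\log_2 \epsilon} 2^{2j} \| \rho \ast \psi_{j,.} \|_2^2 \leq \frac{1}{(2\pi)^3} \int_{|\omega| > c_1 \epsilon^{-2}} \frac{|\hrho(\omega)|^2}{|\omega|^2} \, d\omega
\]
and the analogous inequality with $\int_{|\omega| < 2 c_2 \epsilon}$ for the coarse-scale sum.

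Finally I would invoke \cref{lem: coulomb frequency cut}. The coarse-scale sum follows immediately from the low-frequency estimate \eqref{cutint1} applied at radius $2 c_2 \epsilon$, giving a bound $C \| \rho \|_1^2 \cdot 2 c_2 \epsilon = O(\epsilon)$. The fine-scale sum uses the high-frequency estimate \eqref{cutint2} at radius $c_1 \epsilon^{-2}$, i.e.\ with the lemma's parameter set to $c_1^{-1} \epsilon^2$, producing $C \| \rho \|_1^2 (c_1^{-1} \epsilon^2)^\beta$. The one point to watch is that the fine-scale truncation threshold is $\epsilon^{-2}$ rather than $\epsilon^{-1}$, and that \eqref{cutint2} only delivers an exponent $\beta < 1$: I therefore choose $\beta = 1/2$ (any $\beta \in [1/2, 1)$ works), so that $(\epsilon^2)^\beta = \epsilon^{2\beta} \leq \epsilon$, recovering the claimed $O(\epsilon)$ bound. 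With this choice both inequalities hold, completing the proof. The main (modest) obstacle is bookkeeping the frequency localization induced by \eqref{eqn: compact support assumption} and matching the two truncation thresholds $\epsilon^2$ and $\epsilon^{-1}$ to the two regimes of \cref{lem: coulomb frequency cut}.
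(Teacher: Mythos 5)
Your proposal is correct and takes essentially the same route as the paper's own proof: Plancherel to rewrite the truncated sums as frequency integrals, the support condition to localize those integrals to $\{|\omega| > c_1 \epsilon^{-2}\}$ and $\{|\omega| < 2c_2\epsilon\}$, nonnegativity plus the Littlewood--Paley identity for the pointwise bound $g_J(\omega) \leq |\omega|^{-2}$, and then \cref{lem: coulomb frequency cut} with \eqref{cutint1} for the low-frequency piece and \eqref{cutint2} with $\beta = 1/2$ for the high-frequency piece. The paper makes exactly the same choice $\beta = 1/2$ to convert $(\epsilon^2)^{\beta}$ into the claimed $O(\epsilon)$, so there is nothing to add.
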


\begin{proof}
Following the proof of \cref{thm: bandlimited laurent wavelet
  coulomb regression}, 
\begin{equation}
\sum_{j=-\infty}^{2\log_2 \epsilon} 2^{2j} \| \rho \ast \psi_{j, \cdot}
\|_2^2 = \int_{\R^3} | \hat{\rho} (\omega) |^2 \sum_{j = -\infty}^{2
         \log_2 \epsilon} 2^{2j} \int_{[0,\pi]^2} | \hpsi_{j, \theta}
         (\omega)|^2 \, d\theta \, d\omega, \label{eqn: wavelet
         coulomb cut 01}
\end{equation}
and
\begin{equation}
\sum_{j=-\log_2 \epsilon}^{+\infty} 2^{2j} \| \rho \ast \psi_{j, \cdot}
\|_2^2 = \int_{\R^3} | \hat{\rho} (\omega) |^2 \sum_{j = -\log_2
         \epsilon}^{+\infty} 2^{2j} \int_{[0,\pi]^2} | \hpsi_{j,
         \theta} (\omega)|^2 \, d\theta \, d\omega. \label{eqn:
         wavelet coulomb cut 02}
\end{equation}
Utilizing \eqref{eqn: compact support assumption},
\begin{equation}
\supp \sum_{j = -\infty}^{2 \log_2 \epsilon} 2^{2j} \int_{[0,\pi]^2} |
\hpsi_{j, \theta} (\omega)|^2 \, d\theta \subset \{ | \omega | > c_1
\epsilon^{-2} \}, \label{eqn: wavelet high frequency support}
\end{equation}
and
\begin{equation}
\supp  \sum_{j = -\log_2 \epsilon}^{+\infty} 2^{2j} \int_{[0,\pi]^2} |
\hpsi_{j, \theta} (\omega)|^2 \, d\theta \subset \{ | \omega | < 2
c_2 \epsilon \}. \label{eqn: wavelet low frequency support}
\end{equation}
Combing \eqref{eqn: littlewood-paley exact repeat} with \eqref{eqn:
  wavelet high frequency support} and \eqref{eqn: wavelet low
  frequency support}, and plugging into \eqref{eqn: wavelet coulomb
  cut 01} and \eqref{eqn: wavelet coulomb cut 02} respectively, yields:
\begin{equation}
\sum_{j=-\infty}^{2\log_2 \epsilon} 2^{2j} \| \rho \ast \psi_{j, \cdot}
\|_2^2 \leq \int_{|\omega| > c_1 \epsilon^{-2}}
         \frac{|\hat{\rho}(\omega)|^2}{|\omega|^2} \, d\omega \leq C
         \cdot \| \rho \|_1^2 \cdot \epsilon, \label{eqn: wavelet tail bound 01}
\end{equation}
and
\begin{equation}
\sum_{j=-\log_2 \epsilon}^{+\infty} 2^{2j} \| \rho \ast \psi_{j, \cdot}
\|_2^2 \leq \int_{|\omega| < 2 c_2 \epsilon}
         \frac{|\hat{\rho}(\omega)|^2}{|\omega|^2} \, d\omega \leq C
         \cdot \| \rho \|_1^2 \cdot \epsilon, \label{eqn: wavelet tail bound 02}
\end{equation}
where the bound \eqref{eqn: wavelet tail bound 01} follows from
\cref{lem: coulomb frequency cut}, equation \eqref{cutint2} with
$\beta = 1/2$, and the bound \eqref{eqn: wavelet tail bound 02} follows from
\cref{lem: coulomb frequency cut}, equation \eqref{cutint1}. 

\end{proof}

\bibliographystyle{plain}
\bibliography{/Users/mhirn/Dropbox/Mathematics/Bibliography/MainBib}

\end{document}